\newcommand{\dx}{\, {\rm d}x}
\newcommand{\dy}{\, {\rm d}y}
\newcommand{\dz}{\, {\rm d}z}
\newcommand{\dt}{\, {\rm d}t}
\newcommand{\di}{\, {\rm d}}
\newcommand{\strokedint}{\fint}
\newif\ifdraft
\def\eps{\varepsilon}
\newcommand{\F}{\mathcal{F}}
\def\L{\mathcal L}
\newcommand{\M}{\mathcal{M}}
\newcommand{\nada}[1]{}
\newcommand{\R}{\mathbb{R}}
\newcommand{\res}{\mathop{\hbox{\vrule height 7pt width 0.5pt depth 0pt
\vrule height 0.5pt width 6pt depth 0pt}}\nolimits}
\def\Z{\mathcal Z}
\numberwithin{equation}{section}
\mathchardef\emptyset="001F
\newtheorem{theorem}{Theorem}[section]
\newtheorem{definition}[theorem]{Definition}
\newtheorem{prop}[theorem]{Proposition}
\newtheorem{cor}[theorem]{Corollary}
\newtheorem{lemma}[theorem]{Lemma}
\theoremstyle{definition}
\newtheorem{remark}[theorem]{Remark}
\title{$\Gamma$-Convergence {for} plane to wrinkles transition problem }
\author{
Peter Bella\footnote{
Technische Universit\"at Dortmund
Fakult\"at f\"ur Mathematik, 
44227 Dortmund, Germany. E-mail: peter.bella@udo.edu
                      }
\and
Roberta Marziani\footnote{Dipartimento di Ingegneria e Scienze dell'Informazione e Matematica, 67100 L'Aquila, Italy.
	E-mail: roberta.marziani1@univaq.it}
}
\begin{document}

\maketitle
\begin{abstract}
We consider a variational problem modeling transition between flat and wrinkled region in a thin elastic sheet, and identify the $\Gamma$-limit as the sheet thickness goes to $0$, thus extending the previous work of the first author [Bella, ARMA 2015]. The limiting problem is scalar and convex, but constrained and posed for measures. For the $\Gamma-\liminf$ inequality we first pass to quadratic variables so that the constraint becomes linear, and then obtain the lower bound using Reshetnyak's theorem. The construction of the recovery sequence for the $\Gamma-\limsup$ inequality relies on  mollification of quadratic variables, and careful choice of multiple construction parameters. Eventually for the limiting problem we show existence of a minimizer and equipartition of the energy for each frequency. 

\end{abstract}

\section{Introduction}\label{sec:introduction}

This paper is about fine analysis of minimizers of a nonconvex variational problem 
which describes wrinkling of thin elastic sheets. 

Motivated by some physical experiments with thin elastic sheets~\cite{DaShCe12,DaSh++11,GeBeMe04}, the first author, in his PhD thesis~\cite{Be12} (see also~\cite{BeKo14+}), considered a specific variational problem describing deformations of a thin elastic sheet $\Omega_h\subset\R^3$ of thickness $h$ and cross section of annular shape. The elastic energy, corresponding to a deformation $v\colon\Omega_h\to\R^3$, consists of a membrane term, measuring stretching and compression of the sheet, and of a bending term, which penalizes curvature. As a proxy for the energy one can think  of
\begin{equation}\label{proto_en}
 E_h(v) = \int_{\Omega_h} \mathrm{dist}^2(\nabla v, { \mathrm{SO}}(3)) \dx+ h^2 \int_{\Omega_h} |\nabla^2 v|^2\dx. 
\end{equation}
The membrane part is non-convex, possibly giving rise to oscillations. In contrast, the latter bending part is convex and of higher-order, thus regularizing the problem. Since the bending resistance is related to the sheet thickness $h$, the magnitude of this contribution asymptotically vanishes in the limit $h \searrow 0$. 

The physics approach to tackle these problems consists of a specific choice of an ansatz (guess) for the form (shape) of a minimizer. In other words, one restricts the analysis to a class of competitors having specific characteristics, and look for a minimizer of the energy within that class.
On the other hand, the rigorous analytical approach does not make any assumptions on the form of a minimizer, i.e., the energy is minimized over all possible deformations. The problem in \eqref{proto_en} being non-convex, hence possibly possessing many (local) minimizers or critical points, the first step is to understand the minimal value of the energy, with possibly learning some clues by which deformations is this minimal value, at least approximately, achieved. 

\begin{figure}
 	\centering
    \includegraphics[width=0.5\textwidth]{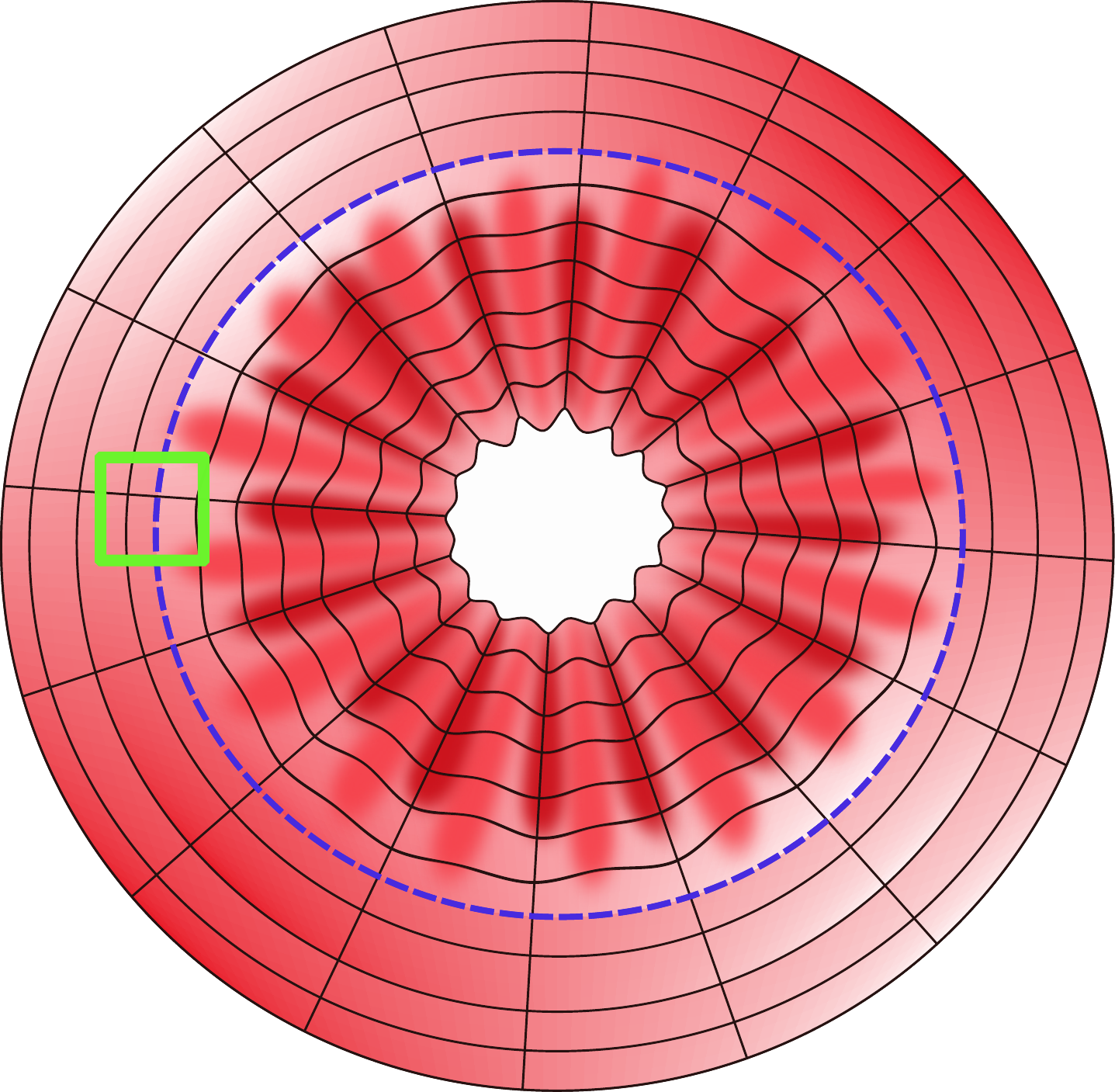} 
	\caption{Elastic annular membrane {stretched}  in the radial direction. The blue dotted curve represents the free boundary that separates the outer stretched region from the inner wrinkled one.}\label{fig1}
\end{figure}

Hence, we first try to identify the minimal value of the energy. Precisely, in the present situation, the goal is to understand its dependence on the (small) sheets thickness $h$. It turns out that the minimal value $\min_v E_h(v)$ consists of a leading zeroth-order term $\mathcal E_0$ (coming from the stretching of the sheet) plus a linear correction in $h$, which corresponds to the cost of \emph{wrinkling} of the sheet~\cite{Be12,BeKo14+}. More precisely, there exist two constants $0 < C_0 < C_1 < \infty$ such that for any thickness $0 < h < 1$ there holds 
\begin{equation}\label{eq01}
 \mathcal E_0 + C_0 h \le \min_{v} E_h(v) \le \mathcal E_0 + C_1 h.
\end{equation}
The wrinkling serves as a mechanism to relieve compressive stresses, which are caused by specific geometrical effects. 
An alternative to wrinkling would be simply compression, which contributes to the membrane part at the order $O(1)$. 
Hence, in the case of small thickness (present situation) compression is much less energetically favorable ($O(1)$ vs $O(h^2)$), and thus not expected. 

The identified linear scaling law~\eqref{eq01} in $h$ for the minimal value of the energy raised a lot of discussion among the physics community, having improved their ansatz-based prediction by a factor of $\log h$ (i.e. $h$ in~\cite{BeKo14+} vs $h {(|\log h|+1)}$ in~\cite{DaShCe12}). It turns out that this discrepancy is related to a suboptimal choice of the ansatz close to the interface between the wrinkled and the flat region. Moreover, 
{the upper bound in~\eqref{eq01} is achieved through a complex construction involving branching effects, a pattern which was not observed experimentally.}

To shed light on this discrepancy, the first author considered a variational problem modelling the transition region~\cite{BellaARMA} with the aim of better understanding the behavior of the minimizer in that region.
Working at the level of the energy, this means to consider the quantity $\frac{\min_u E_h(u) - \mathcal{E}_0}{h}$,{ which not only is bounded away} from $0$ and $\infty$ (see~\eqref{eq01}), 
but as $h \searrow 0$ it actually converges to some value $\sigma$ (as proven in~\cite{BellaARMA}). Even though the value $\sigma$ is characterized as a limit of minima of simpler scalar and convex variational problems, it \emph{does not} provide any information on the form of sequence of minimizers. 

In that respect, the goal of this paper is to overcome this shortcoming by showing $\Gamma$-convergence of the functionals
$\frac{E_h - \mathcal{E}_0}{h}$ as $h \searrow 0$. As usual, a consequence of $\Gamma$-convergence is convergence of minimizers $u_h$ of $E_h$ to a minimizer of the limiting problem, hence providing information on $u_h$, at least for $0 < h \ll 1$. Denoting by $\mathcal F_\infty$ the $\Gamma$-limit functional (see~\eqref{def:F_infty} below), {it turns out that  as expected from~\cite{BellaARMA}, $\mathcal F_\infty$ is scalar and convex, thus}
possibly much easier to analyze than the original $E_h$. Nevertheless, the study of minimizers of $\mathcal F_\infty$ is still far from trivial and we postpone it to a future work -- except for some preliminary results collected in Section~\ref{sec:regularity}.

\medskip
There are many areas of material science, most of them falling within a class of energy-driven pattern formation~\cite{KohnICM}, where the idea to study energy scaling laws for variational problems turned out to be very fruitful. The common features of these problems is the presence of a nonconvex term in the energy, which is regularized by a higher-order term with a small prefactor. This small parameter (for now denoted $\eps$) has different meanings: thickness in the case of elastic films, inverse Ginzburg-Landau parameter in the theory of superconductors, strength of the interfacial energy for models of shape-memory alloys or micromagnetics, to name just few. As $\eps \searrow 0$, the oscillations caused by the nonconvexity are less penalized, giving the energy more freedom to form patterns/microstructure. 

The first paper in this direction, in the context of shape memory alloys, is a seminal work of Kohn and Müller~\cite{KoMu94}, where they studied a toy problem to model the interface in the austenite-martensite phase transformation. They showed that the energy minimum scales like $\epsilon^{2/3}$, which was in contrast with the scaling $\eps^{1/2}$, widely accepted in the physics community. More precisely, the physics arguments were based on an ansatz of ``one-dimensional'' structure of minimizers, whereas Kohn and Müller used a branching construction to achieve lower energy. While they did not show the form of minimizers, they provided localized (in one direction) estimates on the energy distribution for the minimizer -- thus providing hints on scales used for branching. Subsequently, Conti~\cite{Co00} used an intricate upper bound construction to show localized energy bounds (in both directions), which in particular implies asymptotical self-similarity of the minimizer close to the interface. The analysis of the toy model was later generalized in several directions, for example analysis based on energy scalings laws for the cubic-to-tetragonal phase transformation - e.g. rigidity of the microstructure~\cite{capellaotto1,capellaotto2} or study of the energy barrier for the nucleation in the bulk~\cite{KnuepferKohnOtto} and at the boundary~\cite{BeGo15}. 
In that respect it is worth to also mention recent works of Rüland and Tribuzio~\cite{RuTr23,RuTr23b}, where a novel use of Fourier Analysis allows to obtain sharp lower bounds on the energy on a more advanced model. 

The work of Kohn and Müller initiated many developments in other areas of material science to study pattern formation driven by the energy minization, for example in micromagnetics~\cite{Ot02,BrKn23,RiRo23}, island growth on epitaxially strained films~\cite{BellaGoldmanZwicknagl}, diblock copolymers~\cite{Ch01}, optimal design~\cite{KoWi16,KoWi14}, or superconductors~\cite{Se15}. Picking one of them as an example, the Ginzburg-Landau model describes behavior of superconductors in different regimes of the applied magnetics field. While for extreme values of the magnetic field (very small or very large) there is only one (normal or superconducting) phase, for intermediate values of the field the mixed states consisting of many vortices are observed. There the leading order energy characterizes the number of vortices, and the next order in the energy describes interaction between them (see~\cite{Se15} for a survey, \cite{RoSaSe23} for analysis in three spatial dimensions, and~\cite{PeSe17} for {a} similar work in the context of $2d$ Coulomb gases). 

\medskip
The models for wrinkling of thin elastic films have similar feature, with the leading order term  in the energy {expansion} encoding the wrinkled regions while the next term in the energy expansion being related to the form (e.g. lengthscale) of wrinkling. The relevant physical object being a two dimensional (thickened) surface in $\mathbb R^3$, the local energy expense of a deformation $u$ is encoded using two principal values of a $3\times2$ matrix $\nabla u$ -- heuristically, singular value greater or smaller than $1$ corresponds to a tension or a compression, respectively. Wrinkling being an energetically efficient alternative to a compression, we expect it to appear in the case of (at least) one singular value being less than one. 

{A compressed elastic} sheet can feel the compression in one (``tensile wrinkling'') or both directions (``compressive wrinkling''). A class of problems falling into the latter category for which the energy scaling laws were identified include for instance blistering/delamination problem (with~\cite{KoNg13,BeKo15,BoCoMu16,PG20} or without~\cite{JiSt01,BBCoDeMu02,BBCoDeMu00} substrate effects), crumpling of elastic sheets~\cite{CoMa08,Ve04}, or analysis of conical singularities in elastic sheets~\cite{Ol19,Ol18}. A common feature of this problem is degeneracy of the relaxed energy: the minimum of the relaxed energy $\mathcal E_0$ equals zero, and more importantly it is achieved by many different minimizers, making the analysis of the next order {expansion} of the energy often difficult. 

In contrast, tensile wrinkling problems usually have relaxed problem with unique minimizer, making the analysis of the next order term (which describes wrinkling) more accessible. The need for compression usually comes from the prescribed boundary conditions (as for example in the raft problem~\cite{BrKoNg13,benny-raft}, twisted ribbon~\cite{KoOB18}, hanging drapes~\cite{romandrapes, BeKo17drapes}, or compressed cylinder~\cite{To18}), through prescribed incompatible strain~\cite{BeKo14,MaSh19} or curvature effects~\cite{BeKo17,BeRo23,To21}. 

\medskip

The model we consider here is a mixture of the first and the second case, i.e., it is driven both by the boundary conditions as well as prescribed nontrivial {metric (prestrain)}. The latter should mimic the need to ``waste the length'' in one direction, this need coming from geometric effects in our original motivation~\cite{BeKo14+}. More precisely, in~\cite{BeKo14+} an elastic annulus is stretched radially with stronger inner loads, forcing some of the concentric circles of material to move closer to the center.  Pushing some circles into less space naturally force compression or wrinkling out of plane, while the circles towards outer boundary stay planar (and are actually stretched in the azimuthal direction). As we will see, it is crucial that the amount of arclength grows linearly in the distance from the free boundary (between the wrinkled and planar region) and not slower (e.g. quadratically) -- the latter case {is expected to} be quite boring with the minimizer using only \emph{one} frequency. In contrast, the present problem requires infinitely many frequencies, in particular near the transition higher and higher frequencies are needed. 


The rest of this section will provide an overview of our results and organization of the paper. As in~\cite{BellaARMA}, we consider a specific thickness dependent energy $E_h$ (see \eqref{energy1} for its precise definition), a model problem describing transition between planar and wrinkled region in thin elastic sheet, and are interested to understand structure of minimizers of the energies as $h \searrow 0$. We consider a thin elastic sheet of thickness $h$ and cross section of rectangular shape $[-1,1]\times [-1,1] \subset \R^2$, which represents a piece of the elastic annulus {depicted above in { Figure}~\ref{fig1} by a green region}, and assume the sheet is i) stretched in the $x$-direction, and ii) stretched/compressed in the $y$-direction proportional to $x$ (i.e., it is {unstrained} for $x=0$, stretched in the $y$-direction in the left half and compressed in the right half of the domain). The streching/compression in the $y$-direction is modeled via prescribed metric together with periodic boundary conditions {at the top and bottom boundary.}

To relax the compression in the region $\{ x > 0 \}$ we expect the sheet to wrinkle, with the lengthscale of wrinkles of order $h^{1/2}$~\cite{BeKo14+}. In order to analyze the limit of $\frac{E_h-\mathcal{E}_0}{h}$ as $h \searrow 0$, we rescale the $y$-variable by $h^{-1/2}$ so that the wrinkles lengthscales stay of order $1$, and the {out-of-plane displacement has} chance to converge to some limiting shape. Though the rescaling cause changes of the domain to $[-1,1] \times [-L,L]$ for $L := h^{-1/2} \to \infty$, in particular the $\Gamma$-limit of the functionals is not clear due to changing domain, we pass to the Fourier space to avoid these complications. More precisely, we rewrite the energy using Fourier expansion in $y$, with $L$ {appearing through} the summation set $\frac{\pi \mathbb{Z}}{L}$. Heuristically, as $L \to 0$ the Fourier sum will turn into an integral, hence there is a hope for a limiting functional to make sense. 

This strategy was successfully pursued by the first author in~\cite{BellaARMA}, by observing i) the out-of-plane displacement $u$ being the only relevant quantity to be monitored in this limit, and ii) for fixed (large) $L = h^{-1/2}$ the minimum of the excess energy $\frac{ E_h - \mathcal{E}_0}{h}$ is well approximated by minimum of a \emph{scalar, convex, and constrained} variational problem for $u$ of the form
\begin{equation}\label{eq:SL}
 \mathcal S_L(u) := \int_0^1 \fint_{-L}^L u_{,x}^2 + u_{,yy}^2 \dx \dy \quad \textrm{ subject to }\quad \fint_{-L}^L u_{,y}^2(x,y) \dy = 2x + o(1) \quad \textrm{for a.e. } x \in (0,1)
\end{equation}
Denoting by $a_k(x)$ the Fourier coefficients in $y$ of $u(x,\cdot)$, we can rewrite
\begin{equation*}
\mathcal{S}_L(u)=\int_0^1 \sum_{k \in \frac{\pi \mathbb Z}{L}}( \dot a_k^2(x) + a_k^2(x)k^4)\dx,\quad \text{ and }\quad \fint_{-L}^L u_{,y}^2(x,y) \dy= \sum_{k \in \frac{\pi \mathbb Z}{L}}a_k^2(x)k^2\,, 
\end{equation*}
where ``dot'' denotes the derivative.
%
The main achievements of~\cite{BellaARMA} was to show that minima of $\mathcal S_L$ converge, and then to construct a recovery sequence for the original energy $E_h$, including construction of the in-plane displacement. Since the elastic energy $E_h$ includes all second derivatives of $u$, and not only $u_{,yy}$ which appears in $\mathcal S_L$, regularity statement for the minimizers of $\mathcal S_L$'s played a crucial role for the construction of the recovery sequence. 

The analysis of minima of $\mathcal S_L$ from \cite{BellaARMA} completely avoided the {notion } of convergence of minimizers, which needs to be an integral part of a $\Gamma$-convergence which we study here. To avoid the issue of nonlinear constraint we use quadratic variables (i.e. monitoring $b_k := a_k^2$ instead of $a_k$), which turns the constraint into a linear one. The second term in the energy {$\mathcal{S}_L$} becomes also linear, while the first term can be rewritten as $\frac{(\dot b_k)^2}{4b_k}$. One disadvantage of this approach is the $L^1$-framework,  which naturally leads the limit functional to be defined on the space of measures.
However, the constraint provides a good pointwise control in $x$, in particular the limiting measure can be written as a product of $\dx$ and $x$-dependent measures in $k$. The lower bound argument (Proposition~\ref{prop:lower_bound}) is obtain using Reshetnyak Theorem. 

The upper bound (construction of a recovery sequence) is much more tricky since it needs to be done for any ``limiting'' measure with finite energy, in contrast with~\cite{BellaARMA}, where it was done just for one (more regular) minimizer. The proof of the upper bound (Proposition~\ref{prop:upb}) consists of several steps: 
\begin{enumerate}
 \item Given a limiting measure, to obtain $a_k$'s we will ``discretize'' the measure in the $k$-variable (Lemma~\ref{lem:discretisation}). Moreover, using smoothing of $a_k$'s (more precisely {of} $a_k^2$), for which we need to extend the coefficients $a_k$ from $[0,1]$ via dilation into larger interval $[0,1+]$, we get a good starting point for the construction.
 \item Careful choice of the smoothing scale $\eps(L)$ together with few other parameters allow for definition of the out-of-plane displacement (see Lemma~\ref{lem:moll}), which is then basis for the construction of the in-plane displacement as well as estimates on the excess energy (Proposition~\ref{prop:upb}). 
\end{enumerate}

The paper is organized as follows: in the next Chapter we provide a derivation of the energy, including the functional-analytical framework in form of measures with well-behaved distributional derivatives in $x$, as well as rewriting the energy to a form compatible also with this framework. {Finally,} we state the main $\Gamma$-convergence result of this paper Theorem~\ref{theo:main_theo}. In Chapter~\ref{sec:preliminaries} we show how to disintegrate the limiting measures, while in the {subsequent} Chapter we show the compactness for a sequence with excess energy (Proposition~\ref{prop:compactness}). Subsequently we also show the lower bound (Proposition~\ref{prop:lower_bound}). The upper bound construction is content of Chapter~\ref{sec:upb}. Eventually, in the last Chapter we state and prove existence of minimizer (as a measure) for the limiting energy as well as pointwise (in $k$) equipartion of the energy for this minimizer {(see Theorem~\ref{thm:minimizers})}. A finer analysis of this minimizer will be pursued in a future work of the authors. 

\section{Setting of the problem and main results}\label{sec:setting}
We start by collecting some notation we will use throughout the paper.\\

\noindent
\textbf{Notation.} 
\begin{enumerate}[label=$(\alph*)$]
	\item $\chi_A$ denotes the characteristic function of the set $A$;
	\item $\mathcal{L}^1$ denotes the 1-dimensional Lebesgue measure;
	\item $\delta_k$ denotes the Dirac measure on $k\in\R$;
\item $\mathcal{M}_b(A)$ denotes the space of bounded Radon measures on $A$ with $A\subset\R^2$ Borel measurable;
\item $\mathcal{M}^+_b(A)$ denotes the subspace of  $\mathcal{M}_b(A)$ of positive bounded Radon measures;
\item For a function $f\colon A\subset\R\to\R$ we denote by $\dot f(x)$  and $\ddot{f}(x)$ the first and the second derivative, respectively;
\item For a function $u\colon A\subset\R^2\to\R$ we denote by $u_{,\small\underbrace{x\dots x}_{i \text{ times}}\small\underbrace{y\dots y}_{j \text{ times}}}$ its partial derivative 
$$D^{i+j}u(x,y)= \frac{\di^j}{\dy^j}\frac{\di^i}{\dx^i}u(x,y)\,,\quad i,j\in\mathbb{N},\, 1\le i+j\le3\,;$$
\item For a measure $\mu\in \mathcal{M}_b(A)$ we denote by $\mu_{,x}$ its distributional derivative with respect to the first variable;
\item For a measure $\mu\in \mathcal{M}_b(A)$ we denote by $|\mu|\in \mathcal{M}^+_b(A)$ its total variation;
\item For $\tilde\mu=(\mu_1,\mu_2)\in ( \mathcal{M}_b(A))^2$ we analogously denote by $|\tilde\mu|\in \mathcal{M}^+_b(A)$ its total variation;
\item For $\mu_1\in \mathcal{M}_b(A)$, $\mu_2\in \mathcal{M}^+_b(A)$ we write $\mu_1\ll\mu_2$ if $\mu_1$ is absolute continuous with respect to $\mu_2$ and we indicate by $\frac{\di\mu_1}{\di\mu_2}\in L^1(A,\mu_2)$ the associated density {(Radon-Nikod\'ym derivative)}. 
\end{enumerate}

\noindent
\textbf{The Model.}
Let us now describe the model (energy) for the transition between the flat and wrinkled region, which {the first author} already analyzed in~\cite{BellaARMA}. Instead of considering the annular elastic sheet as in~\cite{BeKo14+}, we consider only a rectangular piece (cut off from the sheet) near the transition region, in particular simplifying the problem by avoiding the need to work in the radial geometry. The annular sheet in~\cite{BeKo14+} is stretched in the radial direction and the concentric circles close to the transition region are stretched/compressed proportional to the distance from the free boundary. We will model the radial stretching by dead tension loads in the horizontal direction, while the stretching/compression in the vertical direction will be modeled by prescribing a non-euclidean metric of the form $\dx + (1+x)\dy$. Moreover, the rectangle being part of the annulus, we prescribe periodic boundary conditions in the vertical direction. 

It is physically natural~\cite{DaShCe12} and mathematically convenient to use ``small-slope'' geometrically linear F\"oppl-von K\'arm\'an theory. {In the membrane part of the energy the in-plane displacement is represented via the \emph{linear} strain while the out-of-plane displacement is kept non-linear (quadratic). The bending part is modeled by simply $L^2$ norm of the Hessian of the out-of-plane displacement instead of $L^2$ norm of the second fundamental form.}
Denoting by $w=(w_1,w_2)$ and $u$ the in-plane and out-of-plane displacement respectively, the elastic energy $E_h$ (normalized per unit thickness) has the form
\begin{equation}\label{energy1}
	\begin{split}
	 E_h(w,u) :=& \frac 12 \int_{-1}^1 \int_{-1}^1 |e(w) + \frac 12 \nabla u \otimes \nabla u - x e_2 \otimes e_2|^2 \dx\dy
	 \\&+\frac12
 \int_{-1}^1 \int_{-1}^1	  h^2 |\nabla^2 u|^2 \dx \dy 
	 - \int_{-1}^1 (w_1(1,y) - w_1(-1,y) )\dy.
	\end{split}
\end{equation}
Here $e(w) := (\nabla w + \nabla^T w)/2$ denotes the symmetric gradient of $w$ and $x e_2 \otimes e_2$ is the deviation of the prescribed metric from the euclidean one. The third integral models the applied tensile dead loads in the horizontal direction. The factor $1/2$ in front of the elastic energy is chosen for convenience, and can be changed to any factor using simple rescaling of $w$ and $u$. Finally, we assume the displacement $(w,u)$ is $2$-periodic in the second variable. 

The behavior of $E_h$ as $h \to 0$ at the leading order is well understood using relaxation techniques~\cite{pipkin1} (also called tension-field theory in the mechanics community). Applied to $E_h$ from~\eqref{energy1}, in the limit $h \to0$ the bending term simply disappears, and the integrand in the membrane term changes to $\min_{A \ge 0} |e(w) - x e_2 \otimes e_2+A|^2$. Hence, one can explicitly compute the (unique) minimizer of the relaxed energy ($w_2=0$ and $w_1 = x$) and its minimum $-2 + \frac 13 = -\frac 53 =: \mathcal{E}_0$. 

From~\cite{BeKo14+} we know that the next term in the energy $E_h$ scales linearly in $h$, hence the right quantity to look at is the rescaled \emph{excess energy} $\frac{E_h - \mathcal{E}_0}{h}$. For $x > 0$ one expects that the sheet wrinkles out-of-plane in the $y$-direction, in order to offset $-xe_2 \otimes e_2$ with $u_{,y}^2$. The linear scaling in $h$ predicts $h^2|\nabla^2 u|^2 \sim h$, in particular $u_{,yy}$ (its largest component) to be of order $h^{-1/2}$. In particular, the scale of wrinkles in the bulk should be reciprocal of this value, i.e., $h^{1/2}$. Not surprisingly, this is also the scale used in the upper bound construction in~\cite{BeKo14+}. 

In order to analyze limiting form of the wrinkles as $h \to 0$ we rescale the $y$-variable by a factor $L := h^{-1/2}$, so that the characteristic lengthscale of wrinkles becomes $1$. Precisely, after performing the change of variables
\begin{equation*}
	\hat w_1(x,y):=w_1(x,L^{-1}y), \quad \hat w_2(x,y):=Lw_2(x,L^{-1}y),\quad \hat u(x,y):=Lu(x,L^{-1}y),
\end{equation*}
the energy $E_h$ becomes (see~\cite[page 630]{BellaARMA} for a straightforward {algebraic manipulation})

\begin{align}\label{eqn:RL}
\mathcal E_L(w,u):=& \strokedint_{-L}^L \int_{-1}^1
 \biggl(\biggl( w_{1,x}+\frac{u_{,x}^2}{2L^2}-1 \biggr)^2 -1\biggr)\dx\dy
+ \strokedint_{-L}^L \int_{-1}^1 \biggl( w_{2,y}+\frac{u_{,y}^2}{2} -x \biggr)^2 \dx\dy \nonumber \\
&
+  \frac{1}{L^2}\strokedint_{-L}^L \int_{-1}^1  {\Big(L^2 w_{1,y}+w_{2,x}+u_{,x}u_{,y} \Big)^2}  \dx\dy
+   \frac{1}{L^2}\strokedint_{-L}^L \int_{-1}^1  \big(u^2_{,x}+u^2_{,yy}\big)\dx \dy\\
&+  \frac{1}{L^4}\strokedint_{-L}^L \int_{-1}^1  \Big(2u^2_{,xy}+\frac{1}{L^2}u^2_{,xx}\Big)\dx\dy\,.
\end{align}
Thus, the functional is defined as $\mathcal E_L\colon \mathcal{A}_L^{\rm in}\times	\mathcal{A}_L^{\rm out}\to[0,+\infty]$, 
where the function spaces describing admissible deformations have the form 
\begin{equation}\label{def:in-plane}
	\mathcal{A}_L^{\rm in}:=\Big\{w=(w_1,w_2)\in W^{1,2}((-1,1)\times\R;\R^2)\colon w(x,\cdot) \text{ is $2L$-periodic $\forall x\in(-1,1)$}\Big\},
\end{equation}
\begin{equation}\label{def:out-of-plane}
		\mathcal{A}_L^{\rm out}:=\Big\{u\in W^{2,2}((-1,1)\times\R)\colon u(x,\cdot) \text{ is $2L$-periodic $\forall x\in(-1,1)$}\Big\}.
\end{equation}
Furthermore, the $\frac{E_h - \mathcal{E}_0}{h}$ turns into 
$\mathcal{F}_L\colon	\mathcal{A}_L^{\rm in}\times	\mathcal{A}_L^{\rm out}\to\R$ defined as
\begin{equation}\label{def:F_L}
	\mathcal{F}_L(w,u):=L^2(\mathcal E_L(w,u)-\mathcal E_0)\,,
\end{equation}
where $\mathcal E_0=-\frac53$ is as above the minimum of the relaxed energy, so that 
\begin{equation}\label{def:F_L_explicit}
\begin{split}
\mathcal{F}_L(w,u)=L^2& \strokedint_{-L}^L \int_{-1}^1
\biggl( w_{1,x}+\frac{u_{,x}^2}{2L^2}-1 \biggr)^2 \dx\dy-\frac{L^2}3
+L^2 \strokedint_{-L}^L \int_{-1}^1 \biggl (w_{2,y}+\frac{u_{,y}^2}{2} -x \biggr)^2 \dx\dy \nonumber \\
&
+  \strokedint_{-L}^L \int_{-1}^1  \Big( L^2 w_{1,y}+w_{2,x}+u_{,x}u_{,y} \Big)^2  \dx\dy
+   \strokedint_{-L}^L \int_{-1}^1  \big(u^2_{,x}+u^2_{,yy}\big)\dx \dy\\
&+  \frac{1}{L^2}\strokedint_{-L}^L \int_{-1}^1  \Big(2u^2_{,xy}+\frac{1}{L^2}u^2_{,xx}\Big)\dx\dy\,.
\end{split}
\end{equation}
%

Before we rigorously proceed further, let us discuss heuristically the form of functional $\mathcal F_L$ and its implications. Most of the terms in the energy are of quadratic nature, and since in addition we are dealing with oscillatory objects defined on longer and longer intervals, it is natural to look at the problem in the Fourier space. 

Expecting the limit of $\mathcal F_L$ to exists (in particular having the {minimizing} sequence bounded as $L \to \infty$), both integrals on the first line needs to (quickly) converge to $0$. The first integral can easily achieve that by simply choosing $w_1 \sim x + o(L^{-1})$ and $u_{,x}$ not too big, the smallness of the {second} integral (after integration in $y$ and using periodicity of $w$) implies the constrain $\fint_{-L}^L u_{,y}^2 \dy = 2x + o(L^{-1})$. 

In order to have continuity of the constraint in the limit $L \to \infty$, and also for other reasons which will be apparent later, we will work with squares of the Fourier coefficients and suitably defined measures as primary objects of studies.
In the following we denote by $k\in\R$ the variable corresponding to the Fourier transform in the $y$-variable. {Moreover,} we use the same notation to denote the second variable when {working} with measures.
\begin{definition}[Measures $\mu^L$ and $\mu^L_{,x}$]\label{def:muL}
	Let $u\in \mathcal{A}_L^{\rm out}$.  We denote by $\mu^L(u)\in\mathcal{M}_b^+((-1,1)\times\R)$ the measure given by 
	\begin{equation*}
		\mu^L(u):= \sum_{
			k\in \frac{\pi \mathbb{Z}}{L}} a(x,k)  \L^1\res{(-1,1)}\times \delta_k\,,
	\end{equation*}
with $$	 {a}(x,k):=k^2a_k^2(x)\,,$$and $a_k\in W^{2,2}(-1,1)$ being the $k$-th Fourier coefficient of $u(x,\cdot)$ for all $x\in (-1,1)$, that is
\begin{equation}\label{def:coef_a}
	a_k(x):=
	 \begin{cases}\displaystyle
	 \sqrt 2	\strokedint_{-L}^{L} u(x,y) \sin(ky) \dy&k\in \dfrac{\pi \mathbb{Z}}{L}, k>0 \,,\\[1em]
	 	\displaystyle
	 \sqrt 2 \strokedint_{-L}^{L} u(x,y) \cos(ky) \dy&k\in \dfrac{\pi \mathbb{Z}}{L}, k< 0\,,\\[1em]
	 	\displaystyle
	 \strokedint_{-L}^Lu(x,y)\dy&k=0\,.
	 \end{cases}
\end{equation}
Moreover we denote by $\mu^L_{,x}(u)$ the distributional $x$-derivative of $\mu^L(u)$.
	\end{definition}
\begin{remark}\label{rem:int-byparts}

\begin{enumerate}[label=$(\roman*)$]
		\item\label{rem-i} The distributional $x$-derivative of a measure $\mu\in\mathcal{M}_b^+((-1,1)\times\R)$ is defined as follows:
		for all $\varphi\in C^\infty_c((-1,1)\times\R)$ we have
		\begin{equation*}
			\langle\mu_{,x},\varphi\rangle:=-\int_{(-1,1)\times\R}\varphi_{,x}\di\mu\,.
		\end{equation*}
		Moreover by a density argument $\mu_{,x}$ can be extended to functions $\varphi(x,k)=\phi(x) \mathbbm{1}_A(k)$ with $\phi\in C^\infty_c(-1,1)$ and $A\subset\R$ bounded and measurable as 
		\begin{equation*}
			\langle\mu_{,x},\phi(x) \mathbbm{1}_A(k)\rangle:=-\int_{(-1,1)\times A}\dot\phi(x)\di\mu\,;
		\end{equation*}
	\item\label{rem-ii} Let $\mu\in\mathcal{M}_b^+((-1,1)\times\R)$ be of the form
	\begin{equation*}
		\mu=\sum_{k\in K}a(x,k)\mathcal{L}^1\res(-1,1)\times\delta_k\,,
	\end{equation*}
	with $K\subset\R$ countable and $a(\cdot,k)\in W^{1,1}(-1,1)$ for all $k\in K$. Then 
	\begin{equation*}
	\mu_{,x}= \sum_{k\in K}a_{,x}(x,k)\mathcal{L}^1\res(-1,1)\times\delta_k\,.
	\end{equation*}
	Moreover as
	$a_{,x}(\cdot,k)=0$ a.e. in $\{x\in(-1,1)\colon {a}(x,k)=0\}$ it follows $\mu_{,x}\in \mathcal{M}((-1,1)\times\R)$ and $\mu_{,x}\ll\mu$.
	\end{enumerate}
\end{remark}
\begin{definition}[Convergence] \label{def:convergence}
	For $L>0$ let $(w^L,u^L)\in\mathcal{A}_L^{\rm in}\times \mathcal{A}_L^{\rm out}$.
	We say a sequence $(w^L,u^L)$ converges as $L\to\infty$ to $\mu\in\M_{b}^+((-1,1)\times\R)$, if 
	$(\mu^L(u^L),\mu^L_{,x}(u^L))$ weakly-* converge  to $(\mu,\mu_{,x})$. 
\end{definition}
\noindent 
	We introduce the class of measures
	\begin{equation}\label{def:limit_measures}
		\begin{split}
			\mathcal{M}_\infty:=\bigg\{\mu\in &\M_b^+((-1,1)\times\R) \colon\mu( (-1,0]\times\R)=0,\ \mu_{,x}\in\M_b((-1,1)\times\R)\,, \\& \mu_{,x}\ll \mu\,,
			\int_{(0,1)\times\R}\phi(x)\di\mu(x,k)=	\int_{0}^1	2x\phi(x)\dx\quad \forall\phi\in C_c^{\infty}(0,1)
			\bigg\}\,,
		\end{split}
	\end{equation}
and the functional $\mathcal F_\infty\colon \M_\infty \to[0,+\infty]$
	\begin{equation}\label{def:F_infty}
	\mathcal F_\infty(\mu)= \int_{(0,1)\times\R} \biggl[
	k^2 +  
	\frac{1}{4k^2}\Bigl(\frac{\di \mu_{,x}}{\di\mu}\Bigr)^2 \biggr]\di\mu\,.
\end{equation}
Here $\frac{\di \mu_{,x}}{\di\mu}$ denotes the Radon-Nikodym derivative, existence of which follows from absolute continuity of $\mu_{,x}$ w.r.t. $\mu$. Moreover, if $\mu_L$ from  definition~\ref{def:muL} would be supported in $(0,1]\times \R$, then $\mathcal F_\infty(\mu_L)$ is simply equal to $\sum_{k \in \frac{\pi \mathbb Z}{L}} \int_0^1 \dot a_k^2(x) + a_k^2(x) k^4 \dx$, i.e. via Plancherel equality {(see equation~\eqref{plancherel-derivatives})} it equals $\mathcal S_L$ from~\eqref{eq:SL}. 
\begin{remark}\label{rem:limit}
	\begin{enumerate}[label=$(\roman*)$]
		\item\label{rem:limit(i)} When convenient we will identify the class $\mathcal M_\infty$ with the class of measures
		\begin{equation} 
			\begin{split}
				\bigg\{\mu\in \M_b^+((0,1)\times\R) &\colon \mu_{,x}\in\M_b((0,1)\times\R)\,, \ \mu_{,x}\ll \mu\,,
				\\& 	\int_{(0,1)\times\R}\phi(x)\di\mu(x,k)=	\int_{0}^1	2x\phi(x)\dx\quad \forall\phi\in C_c^{\infty}(0,1)
				\bigg\}\,;
			\end{split}
		\end{equation}
	\item\label{rem:limit(ii)} For later convenience we observe that $\mathcal{F}_\infty$ can be rewritten as follows
	\begin{equation}\label{def:F_infty2}
	\mathcal F_\infty(\mu)= \int_{(0,1)\times\R} 
k^2 \di\mu+   \int_{(0,1)\times\R} 
\frac{1}{4k^2}\Bigl(\frac{\di \mu}{\di|\tilde\mu|}\Bigr)^{-1}\Bigl(\frac{\di \mu_{,x}}{\di|\tilde\mu|}\Bigr)^2
 \di|\tilde \mu|\,,
	\end{equation}
where $\tilde \mu=(\mu,\mu_{,x})$ and $|\tilde\mu|$ denote its total variation. Indeed, since $\mu_{,x}\ll\mu\ll|\tilde\mu|$, we have 
\begin{equation*}
	\frac{\di\mu_{,x}}{\di|\tilde\mu|}= \frac{\di\mu_{,x}}{\di\mu}\frac{\di\mu}{\di|\tilde\mu|}\,,
\end{equation*}
from which we deduce 
\begin{equation*}
\begin{split}
\int_{(0,1)\times\R} 
\frac{1}{4k^2}\Bigl(\frac{\di \mu_{,x}}{\di\mu}\Bigr)^2 \di\mu&=  \int_{(0,1)\times\R} 
\frac{1}{4k^2}\Bigl(\frac{\di \mu_{,x}}{\di\mu}\Bigr)^2 \frac{\di\mu}{\di|\tilde\mu|}\di|\tilde\mu|\\
& =\int_{(0,1)\times\R} 
\frac{1}{4k^2}\Bigl(\frac{\di \mu}{\di|\tilde\mu|}\Bigr)^{-2} \Bigl(\frac{\di \mu_{,x}}{\di|\tilde\mu|}\Bigr)^2\frac{\di\mu}{\di|\tilde\mu|}\di|\tilde\mu|\\
&=\int_{(0,1)\times\R} 
\frac{1}{4k^2}\Bigl(\frac{\di \mu}{\di|\tilde\mu|}\Bigr)^{-1} \Bigl(\frac{\di \mu_{,x}}{\di|\tilde\mu|}\Bigr)^2\di|\tilde\mu|\,.
\end{split}
\end{equation*}
	\end{enumerate}
\end{remark}
We are now ready to state our main result.
\begin{theorem}[$\Gamma$-convergence]\label{theo:main_theo}
	Let $\mathcal F_L$ and $\mathcal{F}_\infty$ be as in \eqref{def:F_L} and \eqref{def:F_infty} respectively.	Then the following holds:
	\begin{itemize}
			\item[$a)$] $($Compactness$)$. For $L>0$ let $(w^L,u^L)\in \mathcal{A}_L^{\rm in}\times \mathcal{A}_L^{\rm out}$ be such that $\sup_L\mathcal F_L( w^L,u^L)<+\infty$. Then there exists a subsequence (not relabeled) and $\mu\in\mathcal M_\infty$ such that $(w^L,u^L)$ converges as $L\to +\infty$ in the sense of Definition~\ref{def:convergence} to $\mu$;
		\item[$b)$] $($$\Gamma$-convergence$)$. As $L\to+\infty$ the functionals $\mathcal F_L$ $\Gamma$-converge, with respect to the convergence in Definition~\ref{def:convergence}, to the functional $\mathcal F_\infty$.
	\end{itemize}
\end{theorem}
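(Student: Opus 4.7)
\medskip
\textbf{Proof plan.} The statement splits into three independent pieces: compactness (part $(a)$), the $\Gamma$-liminf inequality, and the $\Gamma$-limsup inequality (the latter two forming part $(b)$). I will handle each as a separate proposition, mirroring the organization outlined in the introduction.

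\emph{Compactness.} From the uniform energy bound and Plancherel's identity applied to the bending term $\strokedint \int u_{,yy}^2\dx\dy$, I obtain a uniform bound on $\int \sum_{k \in \pi\mathbb{Z}/L} k^4 (a_k^L)^2 \dx = \int k^2 \di\mu^L$. The second integrand of $\mathcal{F}_L$, after integration in $y$ and the use of the $2L$-periodicity of $w^L_2$, forces $\strokedint_{-L}^L (u^L_{,y})^2 \dy \to 2x$ strongly in $L^2(-1,1)$, which yields both the uniform mass bound on $\mu^L$ and the constraint $\int \phi(x)\di\mu = \int 2x\phi(x)\dx$ in the limit. For the derivative, the representation $\mu^L_{,x} = \sum_k 2k^2 a_k^L \dot{a}_k^L \,\mathcal{L}^1 \otimes \delta_k$ (Remark~\ref{rem:int-byparts}) combined with a double Cauchy--Schwarz yields
\[
|\mu^L_{,x}|((-1,1)\times\R) \le C \Bigl(\int \sum_k k^4 (a_k^L)^2 \dx\Bigr)^{1/2} \Bigl(\int \sum_k (\dot{a}_k^L)^2 \dx\Bigr)^{1/2},
\]
both factors being controlled by $\mathcal{F}_L$ through Plancherel applied to $\strokedint \int u_{,yy}^2$ and $\strokedint \int u_{,x}^2$ respectively. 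Banach--Alaoglu then extracts a weakly-$\ast$ convergent subsequence; the limit $\mu$ automatically satisfies $\mu_{,x} \ll \mu$ (this property being inherited from each $\mu^L$) and $\mu((-1,0]\times\R) = 0$ (since on $\{x \le 0\}$ the relaxed minimizer exhibits no wrinkling, so $u^L \to 0$ there).

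\emph{Lower bound.} Using the rewriting \eqref{def:F_infty2}, the dominant contribution in $\mathcal{F}_L$ equals, up to nonnegative discarded pieces (those involving $w^L$, $u_{,x}^2$, and the $L^{-2}$/$L^{-4}$ corrections),
\[
\int_{(0,1)\times\R} f\!\Bigl(k,\,\tfrac{\di \mu^L}{\di|\tilde\mu^L|},\,\tfrac{\di \mu^L_{,x}}{\di|\tilde\mu^L|}\Bigr)\,\di |\tilde\mu^L|,\qquad f(k,s,t) = k^2 s + \tfrac{t^2}{4 k^2 s},
\]
with $\tilde\mu^L = (\mu^L, \mu^L_{,x})$. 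The integrand $f$ is positively $1$-homogeneous and jointly convex in $(s,t)$ for each fixed $k$, so Reshetnyak's lower-semicontinuity theorem directly yields $\liminf_L \mathcal{F}_L(w^L, u^L) \ge \mathcal{F}_\infty(\mu)$. The apparent degeneracy at $k=0$ is harmless because $\mu^L$ places no mass there (its weight is $k^2 a_k^2$), and this property is inherited by $\mu$.

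\emph{Upper bound.} Given $\mu \in \mathcal{M}_\infty$, the recovery sequence is built in three stages. \emph{(i) Discretization} (Lemma~\ref{lem:discretisation}): approximate $\mu$ by a measure supported on finitely many frequencies of the lattice $\pi\mathbb{Z}/L$ whose squared coefficients $b_k^L(x) = (a_k^L)^2(x)$ are $L^1$, extended from $[0,1]$ to $[0, 1+\delta]$ via dilation so that the subsequent mollification encounters no boundary. \emph{(ii) Mollification} (Lemma~\ref{lem:moll}): mollify each $b_k^L$ at a scale $\eps(L) \to 0$ and add a small perturbation $\eta(L) \to 0$ keeping $b_k^L$ bounded away from zero, ensuring that $(\dot b_k^L)^2 / b_k^L = 4(\dot a_k^L)^2$ remains controlled; define $u^L$ via the sine/cosine Fourier expansion \eqref{def:coef_a}. \emph{(iii) In-plane correction}: define $w_2^L$ as the $y$-primitive of $x - (u^L_{,y})^2/2$ (which has zero mean in $y$ by the constraint), take $w_1^L = x$ plus a corrector making the first integrand of $\mathcal{F}_L$ vanish up to errors of order $o(L^{-2})$, and add an $O(L^{-2})$ corrector to $w_1^L$ to kill the cross term $L^2 w_{1,y}^L + w_{2,x}^L + u_{,x}^L u_{,y}^L$, following the template of~\cite{BellaARMA}. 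The smallness of all error terms, including the higher-order $L^{-2} u_{,xy}^2$ and $L^{-4} u_{,xx}^2$ contributions, follows from careful balancing of the parameters against $L$.

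\emph{Main obstacle.} Compactness and the liminf inequality are largely routine once the correct variables $(\mu^L, \mu^L_{,x})$ are identified and Plancherel/Reshetnyak are invoked. The real difficulty is the upper bound, which must be performed for an \emph{arbitrary} $\mu \in \mathcal{M}_\infty$ rather than a smooth minimizer as in~\cite{BellaARMA}: several small parameters — the discretization width in $k$, the mollification scale $\eps$ in $x$, the perturbation $\eta$ keeping $b_k^L$ bounded below, and the dilation margin $\delta$ — must be tuned as functions of $L$ so that all bending corrections vanish while the quadratic-variable energy still converges to $\mathcal{F}_\infty(\mu)$. This delicate balancing is the content of Proposition~\ref{prop:upb}.
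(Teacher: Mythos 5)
Your overall architecture (pass to the quadratic variables $(\mu^L,\mu^L_{,x})$, Reshetnyak for the liminf, discretization plus mollification plus an explicit in-plane corrector for the limsup) is the same as the paper's, but two of your steps have genuine gaps as stated. In the compactness part you assert that $\mu_{,x}\ll\mu$ holds ``automatically'', being ``inherited from each $\mu^L$''. Absolute continuity is not preserved under weak-$*$ convergence of pairs of measures; the paper obtains it quantitatively, from the uniform bound $\int \frac{1}{4k^2}\bigl(\frac{\di\mu^L_{,x}}{\di\mu^L}\bigr)^2\di\mu^L\le C$ restricted to the strips $|k|<N$, combined with the lower semicontinuity of such quadratic functionals of the density (cf.\ \cite[Example 2.36]{AmFuPa:00}), which forces $\mu_{,x}\res(-1,1)\times(-N,N)\ll\mu\res(-1,1)\times(-N,N)$ for every $N$. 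You have the required bound at hand (it is the same one you use to bound the mass of $\mu^L_{,x}$), but the inheritance claim is not an argument. Likewise ``$\fint_{-L}^L(u^L_{,y})^2\dy\to 2x$ strongly in $L^2(-1,1)$'' cannot hold on $(-1,0)$ where $2x<0$; there one needs the cancellation of $L^2\int_{-1}^0x^2\dx$ against the $-L^2/3$ term and the sign of $\fint(u_{,y}^L)^2$, which is exactly what gives $\mu((-1,0])=0$ — your appeal to ``no wrinkling of the relaxed minimizer'' is heuristic, not a proof.

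The more serious gap is in the upper bound. The paper's construction builds $\hat u^L$ on the \emph{coarser} frequency lattice $\frac{\pi\mathbb Z}{L_0}$ with $L_0=L/n(L)$, $L_0\to\infty$ but $L_0/L\to0$, so that $\hat u^L$ is $2L_0$-periodic; the error terms produced by $w_1^L$ involve squares of $y$-antiderivatives over a full period, and they come out of size $\frac{L_0^4}{L^2}\frac{1}{\delta^2\eps}$, which vanishes only because $L_0\ll L$. Discretizing on the full lattice $\frac{\pi\mathbb Z}{L}$, as you propose, the same terms scale like $L^2\delta^{-2}\eps^{-1}$ and the third-step corrector cannot be made small, so the sub-period device is an essential missing idea, not a detail. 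In addition, your regularization ``add $\eta(L)$ to keep $b^L_k$ bounded away from zero'' is incompatible with the constraint $\sum_k b^L_k(x)=2x$ (which must vanish as $x\to0$ and which your definition of $w_2^L$ as a zero-mean $y$-primitive uses \emph{exactly}); the paper instead controls $\bigl((b*\rho_\eps)_{,x}\bigr)^2/(b*\rho_\eps)$ via Jensen's inequality for the jointly convex function $(z_1,z_2)\mapsto z_1^2/z_2$, and restores the constraint exactly through the amplitude factor $f^L(x)=\sqrt{x/A^L(x)}$ together with the cutoff $\psi_\delta$ near $x=0$, with the quantitative bounds \eqref{eq:AL}--\eqref{eq:f'(x)-f''(x)} making the layer $x\lesssim\eps$ harmless. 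Without the sub-period $L_0$, the exact-constraint correction $f^L\psi_\delta$, and these estimates, the ``careful balancing of parameters'' you defer to cannot be carried out.
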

%
\section{Preliminaries}\label{sec:preliminaries}
 Let $u\in \mathcal{A}_L^{\rm out}$ and let  $a_k(x)$ be defined as in \eqref{def:coef_a}. Then we have
	\begin{equation}\label{eq:fourier_repr}
		\begin{aligned}
	u(x,y)&=a_0(x)+ \sum_ {k\in \frac{\pi \mathbb{Z}}{L}, k>0 } a_k(x)\sqrt2\sin(ky) + \sum_ {k\in \frac{\pi \mathbb{Z}}{L}, k < 0 } a_k(x)\sqrt2\cos(ky) \\
&=a_0(x)+ \sum_ {k\in \frac{\pi \mathbb{Z}}{L}, k>0 } {\rm sign}(a_k(x))\frac{\sqrt{a(x,k)}}{k} \sqrt{2}\sin(ky) + \sum_ {k\in \frac{\pi \mathbb{Z}}{L}, k < 0 } {\rm sign}(a_k(x))\frac{\sqrt{a(x,k)}}{-k} \sqrt{2}\cos(ky)
	\,.
		\end{aligned}
	\end{equation}  
Then  Plancherel equality yields 
\begin{equation}\label{eq:plancherel-u}
	\fint_{-L}^Lu^2\dy= a^2_0(x)+\sum_ {k\in \frac{\pi \mathbb{Z}}{L},k\ne0}a^2_k(x)=a^2_0(x)+
	\sum_ {k\in \frac{\pi \mathbb{Z}}{L},k\ne0}\frac{{a}(x,k)}{k^2}\,.
\end{equation}
The same holds for partial derivatives of $u$, that is
\begin{equation}\label{plancherel-derivatives}
	\begin{split}
	\fint_{-L}^L(D^\alpha u)^2\dy&=
	(D^\alpha a_0(x))^2+
	\sum_ {k\in \frac{\pi \mathbb{Z}}{L},k\ne0}\Big(\frac{\di^{\alpha_1}}{\di x^{\alpha_1}} a_k(x)k^{\alpha_2}\Big)^2 \\
&	= (D^\alpha a_0(x))^2+
	\sum_ {k\in \frac{\pi \mathbb{Z}}{L},k\ne0}\Big(\frac{\partial^{\alpha_1}}{\partial x^{\alpha_1}} \big(\sqrt{a(x,k)}\big)k^{\alpha_2-1}\Big)^2 
	\,, 
	\end{split}
\end{equation}
with $\alpha=(\alpha_1,\alpha_2)$ multi-index with $|\alpha|\le2$.  In case $u$ has higher regularity, i.e., $u\in W^{k,2}((-1,1)\times\R)$  with $k>2$, then the same applies for the higher derivatives, i.e., for $|\alpha|\le k$.
For later convenience we also note that 
\begin{equation}\label{derivatives-a}
\frac{\partial}{\partial x} \big(\sqrt{a(x,k)}\big)= \frac{a_{,x}(x,k)}{2\sqrt{a(x,k)}}\,, \quad\frac{\partial^2}{\partial x^2} \big(\sqrt{a(x,k)}\big)=\frac{a_{,xx}(x,k)}{2\sqrt{a(x,k)}}- \frac{(a_{,x}(x,k))^2}{4\sqrt{a^3(x,k)}}\,.
\end{equation}
We now recall the definition of disintegration of measures only in a specific case that we will be used throughout the paper, and we refer to \cite{AmFuPa:00} for a complete treatment of the subject.
\begin{definition}[Disintegration of measures in the $x$-variable]\label{def:disintegration}
	Let $I\subset\R$ be an interval  and let $\mu\in\mathcal{M}_b(I\times\R)$. We say that the family 
	$$(\nu_x,g(x))_{x\in I} \subset \mathcal{M}_b(\R)\times \R$$ is a disintegration of $\mu$ $($in the $x$-variable$)$ if $x\mapsto\nu_x$ is Lebesgue measurable, $|\nu_x|(\R)=1$ for every $x\in I$,  $g\in L^1(I)$, and 
		\begin{equation}\label{eq:disint}
		\int_{I\times\R}f(x,k)\di\mu=\int_I\int_{\R}f(x,k)\di\nu_x(k)g(x)\dx\,,
	\end{equation}
for every $f\in L^1(I\times\R;|\mu|)$. 
\end{definition}
Formally it simply means $\di\mu(x,k) = \di\nu_x (k) g(x) \dx$.
%
%
%
\begin{lemma}\label{lem:disint=pushfwd}
	Let $I\subset\R$ be an interval  and let $\mu\in\mathcal{M}^+_b(I\times\R)$. Then 
	\begin{equation}\label{eq:push-forward}
		\int_{I\times\R}\phi(x)\di\mu=	\int_Ig(x)\phi(x)\dx \quad \forall\phi\in C_c^{\infty}(I)\,,
	\end{equation}
	for some non-negative $g\in L^1(I)$,	if and only if there exists $x\mapsto\nu_x\in\mathcal{M}^+_b(\R)$ Lebesgue measurable such that $(\nu_x,g(x))_{x\in I} $ is a disintegration of $\mu$.
\end{lemma}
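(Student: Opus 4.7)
The statement is an equivalence; I would treat the two directions separately. The forward implication (existence of a disintegration implies the pushforward identity) is immediate from the definition, while the reverse implication is a direct application of the classical disintegration theorem.

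For the direction in which a disintegration is assumed to exist, I would simply test the defining identity \eqref{eq:disint} against $f(x,k) = \phi(x)$. This function lies in $L^1(I\times\R;|\mu|)$ because $\phi \in C_c^\infty(I)$ and $\mu$ is bounded, and the normalisation $\nu_x(\R) = |\nu_x|(\R) = 1$ (valid since $\nu_x \in \M_b^+(\R)$) collapses the inner integral to $1$, so
$$\int_{I\times\R}\phi(x)\di\mu = \int_I\phi(x)\Bigl(\int_\R \di\nu_x(k)\Bigr)g(x)\dx = \int_I \phi(x)g(x)\dx,$$
which is \eqref{eq:push-forward}.

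For the reverse direction, let $\pi\colon I\times \R \to I$ denote the projection $\pi(x,k) = x$. Hypothesis \eqref{eq:push-forward} identifies the pushforward of $\mu$ under $\pi$, since $C_c^\infty(I)$ is a determining class: it says $\pi_\#\mu = g(x)\,\L^1\res I$. The classical disintegration theorem for bounded Radon measures (see, e.g., Ambrosio--Fusco--Pallara, Theorem 2.28) then yields a Lebesgue-measurable map $x\mapsto\nu_x$ from $I$ into the probability measures on $\R$ (so $\nu_x\in\M_b^+(\R)$ with $\nu_x(\R) = |\nu_x|(\R) = 1$) satisfying
$$\int_{I\times\R} f(x,k)\di\mu(x,k) = \int_I\Bigl(\int_\R f(x,k)\di\nu_x(k)\Bigr)\di(\pi_\#\mu)(x) = \int_I\int_\R f(x,k)\di\nu_x(k)\,g(x)\dx$$
for every $f\in L^1(I\times\R;|\mu|)$, which is precisely \eqref{eq:disint}.

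The only minor point to watch, rather than a genuine obstacle, is a bookkeeping one: the disintegration theorem typically produces $\nu_x$ only for $\pi_\#\mu$-a.e.\ $x$, whereas Definition~\ref{def:disintegration} requires $\nu_x$ for every $x\in I$. Since $\pi_\#\mu = g(x)\dx$ is absolutely continuous with respect to $\L^1$, the family is defined Lebesgue-a.e.; one extends it by a fixed probability measure (e.g.\ $\delta_0$) on the null set where it was previously undefined, which preserves measurability and leaves \eqref{eq:disint} unaffected.
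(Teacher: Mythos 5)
Your proposal is correct and follows essentially the same route as the paper: the forward direction by testing the disintegration identity with $f(x,k)=\phi(x)$ and using $\nu_x(\R)=1$, and the reverse direction by identifying $(\pi_1)_\sharp\mu=g\,\L^1\res I$ from \eqref{eq:push-forward} and invoking the Disintegration Theorem of Ambrosio--Fusco--Pallara. Your extra remark on extending $\nu_x$ from $\pi_\#\mu$-a.e.\ $x$ to all $x\in I$ is a harmless refinement the paper leaves implicit.
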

\begin{proof}
	\textit Let  $(\nu_x,g(x))_{x\in I} \subset \mathcal{M}^+_b(\R)\times \R^+$ be a disintegration of $\mu$. Then \eqref{eq:disint} holds with $f(x,k)=\phi(x)\in C_c^{\infty}(I)$ and since $|\nu_x|(\R)=\nu_x(\R)=1$ we readily deduce \eqref{eq:push-forward}.\\
Assume instead that \eqref{eq:push-forward} holds true. 
	Let $\pi_1\colon I\times \R \to I$ be the canonical projection and let $(\pi_1)_\sharp\mu\in\mathcal{M}^+_b(I)$ be the push-forward of $\mu$ with respect to $\pi_1$.  By the Disintegration Theorem (cf. \cite[Theorem 2.28]{AmFuPa:00}) there exists $x\mapsto\nu_x\in\mathcal{M}^+_b(\R)$ measurable with $\nu_x(\R)=1$ such that
	\begin{equation*}
		\int_{I\times\R}f(x,k)\di\mu(x,k)=\int_I\int_{\R}f(x,k)\di\nu_x(k)\di(\pi_1)_\sharp\mu(x)\,,
	\end{equation*}
	for all $f\in L^1(I\times \R;\mu)$. On the other hand \eqref{eq:push-forward} implies that $(\pi_1)_\sharp\mu(x)=g(x)\L^1\res I$ and therefore $(\nu_x,g(x))_{x\in I}$ is a disintegration of $\mu$.		\\
\end{proof}
\begin{cor}[Disintegration of $\mu\in\mathcal{M}_\infty$ in the $x$-variable]\label{cor:disint=pushfwd}
		Let $\mu\in\mathcal{M}_\infty$. Then there exists $x\mapsto\nu_x\in\mathcal{M}^+_b(\R)$ measurable such that $(\nu_x,2x)_{x\in (0,1)} $ is a disintegration of $\mu$.
\end{cor}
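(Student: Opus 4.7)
The plan is to apply Lemma~\ref{lem:disint=pushfwd} directly to $\mu \in \mathcal{M}_\infty$, choosing $I = (0,1)$ and $g(x) = 2x$. The work is essentially bookkeeping: one verifies that the defining properties of $\mathcal{M}_\infty$ match exactly the hypothesis of Lemma~\ref{lem:disint=pushfwd}.

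First, by the definition \eqref{def:limit_measures} of $\mathcal{M}_\infty$, we have $\mu((-1,0] \times \mathbb{R}) = 0$, so $\mu$ is concentrated on $(0,1) \times \mathbb{R}$. Invoking the identification allowed by Remark~\ref{rem:limit}\ref{rem:limit(i)}, we may view $\mu$ as an element of $\mathcal{M}_b^+((0,1) \times \mathbb{R})$, so that it lives in the correct space for the lemma.

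Second, the constraint characterizing $\mathcal{M}_\infty$, namely
\begin{equation*}
\int_{(0,1) \times \mathbb{R}} \phi(x) \di\mu(x,k) = \int_0^1 2x \, \phi(x) \dx \quad \forall \phi \in C_c^\infty(0,1),
\end{equation*}
is precisely the identity \eqref{eq:push-forward} with $g(x) = 2x$. Since $g(x) = 2x \ge 0$ on $(0,1)$ and clearly $g \in L^1(0,1)$, all hypotheses of Lemma~\ref{lem:disint=pushfwd} are met. Applying the lemma therefore yields a Lebesgue measurable family $x \mapsto \nu_x \in \mathcal{M}_b^+(\mathbb{R})$ with $\nu_x(\mathbb{R}) = 1$ such that $(\nu_x, 2x)_{x \in (0,1)}$ is a disintegration of $\mu$, which is exactly the claim.

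There is no genuine obstacle here; the statement is a direct corollary because the defining constraint for $\mathcal{M}_\infty$ was designed precisely to make the marginal of $\mu$ in $x$ equal to $2x \, \dx$. The only minor point to notice is that positivity of $g(x) = 2x$ on $(0,1)$ is needed to match the formulation of the lemma with non-negative density, which is automatic on the open interval $(0,1)$.
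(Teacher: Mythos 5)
Your proposal is correct and follows essentially the same route as the paper: both apply Lemma~\ref{lem:disint=pushfwd} with $g(x)=2x$, using the constraint in the definition of $\mathcal{M}_\infty$ together with the fact that $\mu$ charges only $(0,1)\times\R$. The extra bookkeeping you include (identification via Remark~\ref{rem:limit}\ref{rem:limit(i)}, non-negativity and integrability of $g$) is implicit in the paper's one-line argument and adds nothing structurally different.
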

\begin{proof}
The proof follows by Lemma \ref{lem:disint=pushfwd} and from the fact that 
\begin{equation}\label{constraint1}
\int_{(0,1)\times\R}\phi(x)\di\mu=	\int_{0}^1	2x\phi(x)\dx\quad \forall\phi\in C_c^{\infty}(0,1)\,.
\end{equation}
\end{proof}
\section{Compactness and lower bound}\label{sec:compactness_lwb}
In this section we prove compactness and the $\Gamma-\liminf$ inequality. 
\begin{prop}[Compactness]\label{prop:compactness}
Let for $L>0$ be $(w^L,u^L)\in \mathcal{A}_L^{\rm in}\times \mathcal{A}_L^{\rm out}$ such that $\sup_L\mathcal F_L( w^L,u^L)<+\infty$. Then there exist a, not relabeled, subsequence  and $\mu\in\mathcal M_\infty$ such that $(w^L,u^L)$ converges to $\mu$, as $L\to+\infty$, in the sense of Definition~\ref{def:convergence}.
\end{prop}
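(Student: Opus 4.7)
The plan is to derive uniform total-variation bounds on $\mu^L:=\mu^L(u^L)$ and $\mu^L_{,x}$ from the hypothesis, pass to a weak-$\ast$ convergent subsequence, and then verify the four conditions defining $\mathcal M_\infty$ in \eqref{def:limit_measures}. Writing $a_k=a_k^L$, the last line of \eqref{def:F_L_explicit} together with the Plancherel identities \eqref{plancherel-derivatives} gives
\begin{equation*}
\sum_{k\in\pi\mathbb Z/L}\int_{-1}^1 \bigl(k^4 a_k^2 + \dot a_k^2\bigr)\dx \;\le\; C.
\end{equation*}
For the strained term I exploit $\fint_{-L}^L w_{2,y}\dy=0$ to complete the square: $\fint(w_{2,y}+u_{,y}^2/2-x)^2\dy\ge(\tfrac12\fint u_{,y}^2-x)^2$. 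Splitting the $x$-integral at $x=0$, using $\fint u_{,y}^2\ge 0$ and $\int_{-1}^0 x^2\dx=1/3$ to absorb the $-L^2/3$ offset, and expanding $(f-x)^2-x^2=f^2-2fx$ on $(-1,0)$ with $f:=\tfrac12\fint u_{,y}^2\ge 0$, the bound $\mathcal F_L\le C$ yields
\begin{equation*}
\int_0^1\bigl(\tfrac12\fint u_{,y}^2-x\bigr)^2\dx\le \tfrac{C}{L^2},\qquad \int_{-1}^0\bigl(\tfrac12\fint u_{,y}^2\bigr)^2\dx \le \tfrac{C}{L^2}.
\end{equation*}
The first gives $\tfrac12\fint u_{,y}^2\to x$ in $L^2(0,1)$; Cauchy--Schwarz on $(-1,0)$ applied to the second gives $\int_{-1}^0\fint u_{,y}^2\dx \to 0$. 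Hence $|\mu^L|((-1,1)\times\mathbb R)=\int\fint u_{,y}^2\dy\dx\le C$, and via $|2k^2 a_k\dot a_k|\le k^4 a_k^2+\dot a_k^2$ together with Remark~\ref{rem:int-byparts}(ii), also $|\mu^L_{,x}|((-1,1)\times\mathbb R)\le C$.

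By weak-$\ast$ compactness in $\mathcal M_b((-1,1)\times\mathbb R)$ I pass to a subsequence with $\mu^L\rightharpoonup^*\mu$ and $\mu^L_{,x}\rightharpoonup^*\nu$; passing to the limit in $\langle\mu^L_{,x},\phi\rangle=-\langle\mu^L,\phi_{,x}\rangle$ for $\phi\in C_c^\infty$ identifies $\nu=\mu_{,x}$ as a bounded Radon measure. Positivity is inherited from $\mu^L\ge 0$. For the support condition, the Portmanteau inequality on the open set $(-1,0)\times\mathbb R$ gives $\mu((-1,0)\times\mathbb R)\le\liminf\mu^L((-1,0)\times\mathbb R)=0$, while $\mu(\{0\}\times\mathbb R)\le\mu((-\eps,\eps)\times\mathbb R)\le\liminf\mu^L((-\eps,\eps)\times\mathbb R)\le \eps^2+o(1)$ (the $\eps^2$ coming from $\int_0^\eps 2x\dx$ via the $L^2$-convergence above); letting $\eps\downarrow 0$ gives $\mu((-1,0]\times\mathbb R)=0$. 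The integral constraint in \eqref{def:limit_measures} follows by testing weak-$\ast$ convergence against $\phi\in C_c^\infty(0,1)$ (extended by zero) and using $\tfrac12\fint u_{,y}^2\to x$ on $(0,1)$.

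The main obstacle is the absolute continuity $\mu_{,x}\ll\mu$. I handle it via Reshetnyak's lower-semicontinuity theorem applied to the vector measure $\tilde\mu^L:=(\mu^L,\mu^L_{,x})\rightharpoonup^*(\mu,\mu_{,x})$ and the integrand
\begin{equation*}
f(k,s,t):=\tfrac{t^2}{4k^2 s}\ \text{if } k\ne 0,\,s>0,\qquad f(k,0,0):=0,\qquad f(k,s,t):=+\infty\ \text{otherwise,}
\end{equation*}
which is lsc on $\mathbb R^3$, convex and positively $1$-homogeneous in $(s,t)$. By the reformulation in Remark~\ref{rem:limit}(ii) and Plancherel,
\begin{equation*}
\int f\Bigl(k,\tfrac{\di\mu^L}{\di|\tilde\mu^L|},\tfrac{\di\mu^L_{,x}}{\di|\tilde\mu^L|}\Bigr)\di|\tilde\mu^L| \;=\; \sum_{k\in\pi\mathbb Z/L}\int_{-1}^1 \dot a_k^2\dx \;\le\; C
\end{equation*}
uniformly in $L$. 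Reshetnyak then transfers this bound to the limit; since $f=+\infty$ on $\{s=0,t\ne 0\}$, the integrand being finite $|\tilde\mu|$-a.e.\ forces $\di\mu_{,x}/\di|\tilde\mu|=0$ wherever $\di\mu/\di|\tilde\mu|=0$, i.e., $\mu_{,x}\ll\mu$. The only subtlety is to check that $|\tilde\mu|$ gives no mass to the $k=0$ fiber: this is automatic, since on $\supp|\tilde\mu|$ the Radon--Nikodym pair $(\di\mu/\di|\tilde\mu|,\di\mu_{,x}/\di|\tilde\mu|)$ is nonzero, and $f=+\infty$ at $k=0$ for every such pair, so finiteness of the integrand $|\tilde\mu|$-a.e.\ rules out $|\tilde\mu|(\{k=0\})>0$.
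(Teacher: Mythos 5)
Your overall skeleton (TV bounds from the energy, weak-$*$ subsequence, identification of the derivative, support condition) is the paper's, but two steps do not hold as written. First, the verification of the constraint $\int_{(0,1)\times\R}\phi\,\di\mu=\int_0^1 2x\phi\dx$ cannot be done by ``testing weak-$*$ convergence against $\phi\in C_c^\infty(0,1)$'': the function $(x,k)\mapsto\phi(x)$ is neither compactly supported nor vanishing as $|k|\to\infty$, so it is not an admissible test function, and a priori mass of $\mu^L$ could escape to $|k|\to\infty$, leaving only an inequality. You need uniform tightness in $k$, which you in fact have at hand: from your first display, $\int_{(0,1)\times\R}k^2\,\di\mu^L=\int_0^1\fint_{-L}^L (u^L_{,yy})^2\dy\dx\le C$, hence $\mu^L\big((0,1)\times\{|k|>R\}\big)\le C R^{-2}$ uniformly in $L$; testing against $\phi(x)\psi_R(k)$ with a cutoff $\psi_R$ and letting $R\to\infty$ (exactly Step 3 of the paper's proof) closes the gap. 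As written, this step is missing.

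Second, in the absolute-continuity argument the integrand you feed into Reshetnyak is not lower semicontinuous: at a point $(k,s,t)=(0,s,0)$ with $s>0$ you assign $+\infty$, while along $k\to0$, $t=0$ the values are $0$, so \cite[Theorem 2.38]{AmFuPa:00} does not apply to your $f$. Its lsc relaxation necessarily takes the value $0$ on $\{k=0,\ t=0\}$, and with that relaxation your final claim --- that finiteness of the limit integral forces $|\tilde\mu|(\{k=0\})=0$ --- simply does not follow (pure $\mu$-mass on the fiber $k=0$ is compatible with a finite integral). Fortunately you do not need that claim: with the lsc envelope, finiteness of the limit integral gives that $\frac{\di\mu_{,x}}{\di|\tilde\mu|}=0$ $|\tilde\mu|$-a.e.\ on $\{k=0\}$ (so $\mu_{,x}$ charges no mass on that line) and that $\frac{\di\mu}{\di|\tilde\mu|}=0$ forces $\frac{\di\mu_{,x}}{\di|\tilde\mu|}=0$ on $\{k\ne0\}$; the two facts together yield $\mu_{,x}\ll\mu$. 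With this repair your route is a legitimate alternative to the paper's, which instead restricts to $(-1,1)\times(-N,N)$, uses $\frac1{4N^2}\le\frac1{4k^2}$ there, and invokes \cite[Example 2.36]{AmFuPa:00} to get $\mu_{N,x}\ll\mu_N$ for every $N$; your variant avoids the truncation at the price of being careful with the integrand at $k=0$, while the paper's avoids any discussion of the $k=0$ fiber.
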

\begin{proof}
Let $(w^L,u^L)$ be as in the statement. Let 
$\mu^L:=\mu^L(u^L)$
and $\mu^L_{,x}:=\mu^L_{,x}(u^L)$ be defined accordingly to Definition \ref{def:muL}, i.e., there exist ${a}^L(x,k)$ such that $a(\cdot,k)\in W^{1,1}(-1,1)$ and
\begin{equation*}
	\mu^L= \sum_{
		k\in \frac{\pi \mathbb{Z}}{L}}  a^L(x,k) \L^1\res{(-1,1)}\times \delta_k\,,
	\quad
	\mu^L_{,x}= \sum_{
		k\in \frac{\pi \mathbb{Z}}{L}}  a_{,x}^L(x,k)  \L^1\res{(-1,1)}\times\delta_k\,.
\end{equation*} 
\textit{Step 1:} we show that there exists $\mu\in \mathcal{M}_b^+((-1,1)\times\R)$ with $\mu_{,x}\in \mathcal{M}_b((-1,1)\times\R)$ and such that $(\mu^L,\mu^L_{,x})\stackrel{*}{\rightharpoonup}(\mu,\mu_{,x})$.
To this aim we observe that by taking $0<C_0:=\sup_L\mathcal{F}_L(w^L,u^L)<+\infty$ we have
\begin{equation*}
\mathcal F_L(w^L,u^L)
\le C_0\,,
\end{equation*}
so that in particular
\begin{equation}\label{eq:estimate}
	\begin{split}
	C_0\ge	\mathcal F_L(w^L,u^L)\ge 
		{L^2}\strokedint_{-L}^L\int_{-1}^1 &\bigg ( w^L_{2,y}+\frac{(u_{,y}^L)^2}{2} -x \bigg)^2 \dx\dy
	-\frac{L^2}3\\
	& +\strokedint_{-L}^L\int_{-1}^1 (u^L_{,x})^2+(u^L_{,yy})^2\dx\dy\,.
	\end{split}
\end{equation}
By Fubini's theorem, Jensen's inequality  and the fact that $w^L(x,\cdot)$ is $2L$-periodic we get
\begin{equation}\label{eq:estimate1}
	\begin{split}
	{L^2}\strokedint_{-L}^L\int_{-1}^1& \bigg( w^L_{2,y}+\frac{(u_{,y}^L)^2}{2} -x \bigg)^2 \dx\dy\ge L^2\int_{-1}^1\bigg(\fint_{-L}^L
\Big(	w^L_{2,y}+\frac{(u_{,y}^L)^2}{2} -x\Big)\dy
\bigg)^2\dx\\
& =L^2
\int_{0}^1\bigg(\fint_{-L}^L
\frac{(u_{,y}^L)^2}{2} \dy-x
\bigg)^2\dx+ L^2\int_{-1}^0\bigg(\fint_{-L}^L
\frac{(u_{,y}^L)^2}{2} \dy-x
\bigg)^2\dx\\
&
\ge L^2 \int_{0}^1\bigg(\fint_{-L}^L
\frac{(u_{,y}^L)^2}{2} \dy-x
\bigg)^2\dx + L^2
\int_{-1}^0 \bigg(\fint_{-L}^L
\frac{(u_{,y}^L)^2}{2} \dy
\bigg)^2\dx +L^2\int_{-1}^0x^2\dx\,,
	\end{split}
\end{equation}
where  the last inequality follows by using that $(a+b)^2\ge a^2+b^2$ provided that $ab>0$ with $a=\frac12\fint_{-L}^L(u_{,x}^2)\dy$ and $b=-x$ for $x\in (-1,0)$. 
Combining \eqref{eq:estimate} with \eqref{eq:estimate1} and using that $\int_{-1}^0x^2\dx=\frac13$ we find
\begin{equation}\label{eq:est0}
	\begin{split}
	C_0\ge \mathcal{F}_L(w^L,u^L)\ge & L^2 \int_{0}^1\bigg(\fint_{-L}^L
	\frac{(u_{,y}^L)^2}{2} \dy-x
	\bigg)^2\dx \\
&	+ L^2
	\int_{-1}^0 \bigg(\fint_{-L}^L
	\frac{(u_{,y}^L)^2}{2} \dy
	\bigg)^2\dx +\strokedint_{-L}^L\int_{-1}^1 (u^L_{,x})^2+(u^L_{,yy})^2\dx\dy\,.
	\end{split}
\end{equation}
Thus from \eqref{plancherel-derivatives} it follows
\begin{equation}\label{eq:est1}
	\begin{split}
	\frac C{L^2}\ge 	\int_0^1\bigg(\strokedint_{-L}^L\frac{(u^L_{,y})^2}2  \dy-x\bigg)^2 \dx&= \int_0^1\bigg(\frac12\sum_{k\in \frac{\pi \mathbb{Z}}{L}}a^L(x,k)-x\bigg)^2\dx\\
	&\ge C \Bigg( \int_0^1\bigg(\frac12\sum_{k\in \frac{\pi \mathbb{Z}}{L}}a^L(x,k)\bigg)^2\dx- \frac13\Bigg)\,,
	\end{split}
\end{equation}
and
\begin{equation}\label{eq:est2}
\frac C{L^2}\ge \int_{-1}^0\bigg( \strokedint_{-L}^L\frac{(u^L_{,y})^2}2 \dy\bigg)^2 \dx=\int_{-1}^0\bigg(\frac12\sum_{k\in \frac{\pi \mathbb{Z}}{L}} a^L(x,k)\bigg)^2\dx\,.
\end{equation}
Hence we obtain
\begin{equation*}
|\mu^L|((-1,1)\times\R)=\mu^L((-1,1)\times\R)
=
\int_{-1}^1\sum_{k\in \frac{\pi \mathbb{Z}}{L}} a^L(k,x)\dx\le C\,,
\end{equation*}
from which we deduce the existence of a (not relabeled) subsequence  and $\mu\in\mathcal{M}^+_b((-1,1)\times\R)$ such that 
$\mu^L\stackrel{	*}{\rightharpoonup}\mu$.
In addition \eqref{eq:est0} together with \eqref{plancherel-derivatives} and \eqref{derivatives-a} yield
\begin{equation}\label{eq:compactness}
	\begin{split}
		C&\ge  \int_{-1}^1 \strokedint_{-L}^L  (u^L_{,x})^2+(u^L_{,yy})^2\dy\dx\\
		& \ge \int_{-1}^1 \Bigg( \sum_ {k\in \frac{\pi \mathbb{Z}}{L}} a^L(x,k)k^2+\sum_ {k\in \frac{\pi \mathbb{Z}}{L},k\ne0}\frac{1}{4k^2}\frac{({a}^L_{,x}(x,k))^2}{a^L(x,k)}\Bigg)\dx\\
		&\ge  \int_{-1}^1   \sum_ {k\in \frac{\pi \mathbb{Z}}{L}}|{a}^L_{,x}(x,k) |
		\dx=|\mu^L_{,x}|((-1,1)\times\R)\,,
		 	\end{split}
\end{equation}
where the last inequality follows by Young's inequality. 
 Hence, up to subsequence, we may deduce that there exists $\tilde\mu\in\mathcal{M}_b(\times(-1,1)\times\R)$ such that $\mu^L_{,x}\stackrel{*}{\rightharpoonup}\tilde\mu$. Moreover given any $\varphi\in C^\infty_c((-1,1)\times\R)$, it holds
\begin{equation*}
	\int_{(-1,1)\times\R}\varphi \di\tilde\mu=\lim_{L\to+\infty}
\int_{(-1,1)\times\R}\varphi \di\mu^L_{,x}= - \lim_{L\to+\infty} \int_{(-1,1)\times\R} \varphi_{,x} \di\mu^L=-\int_{(-1,1)\times\R}\varphi_{,x}\di\mu\,,
\end{equation*}
which in turn implies $\tilde\mu=\mu_{,x}$. \\

\noindent
\textit{Step 2:} we show that $\mu_{,x}\ll\mu$.  By Remark \ref{rem:int-byparts} \ref{rem-ii} we have that $\mu^L_{,x}\ll\mu^L$. Now let $N\in\mathbb N$ be fixed and let $\mu^L_N:=\mu^L\res(-1,1)\times(-N,N)$ and $\mu_N:=\mu\res(-1,1)\times(-N,N)$. Then the following properties hold:
\begin{equation*}
\mu^L_{N,x}:=\mu_{,x}^L\res(-1,1)\times(-N,N)\,, \quad \mu_{N,x}:=\mu_{,x}\res(-1,1)\times(-N,N)\,,
\end{equation*}
\begin{equation}\label{eq:abs}
\mu^L_{N,x}\ll\mu^L_{N}\,,\quad (\mu^L_N,\mu^L_{N,x} )  \stackrel{*}{\rightharpoonup}(\mu_N,\mu_{N,x})\,,
\end{equation}
and 
\begin{equation*}
\frac{\di\mu^L_{N,x}}{\di\mu^L_N}(x,k)=\frac{\di\mu^L_{,x}}{\di\mu^L}(x,k)\res(-1,1)\times(-N,N)
\,.
\end{equation*}
Moreover recalling the definition of $\mu^L$ and \eqref{eq:compactness} we have 
\begin{equation}\label{eq:bound}
	\begin{split}
	\int_{(-1,1)\times(-N,N)}\frac1{4N^2} \Big(\frac{\di\mu^L_{N,x}}{\di\mu^L_N}(x,k)\Big)^2\di\mu^L_N&\le 
\int_{(-1,1)\times(-N,N)}\frac1{4k^2} \Big(\frac{\di\mu^L_{,x}}{\di\mu^L}(x,k)\Big)^2\di\mu^L\\
&\le \int_{-1}^{1} \sum_ {k\in \frac{\pi \mathbb{Z}}{L},k\ne0} \frac{1}{4k^2}\frac{({a}^L_{,x}(x,k))^2}{ a^L(x,k)}\dx\le C\,.
	\end{split}
\end{equation}
From \eqref{eq:abs}, \eqref{eq:bound} and \cite[Example 2.36 pg. 67, and discussion at pg. 66]{AmFuPa:00} we deduce that $\mu_{N,x}\ll\mu_N$ for every $N\in\mathbb N$ and hence $\mu_{,x}\ll\mu$.\\

\noindent
\textit{Step 3:} we show that $\mu\in\mathcal{M}^+_b((0,1)\times\R)$, that is, $\mu ((-1,0]\times\R)=0$, and that 
\begin{equation}\label{eq:constr}
	\int_{(0,1)\times\R}\phi(x)\di\mu=\int_0^12x\phi(x)\dx,
\end{equation}
for all $\phi\in C_c^{\infty}((0,1))$.
To this purpose for fixed $\delta\in(0,1)$ by \eqref{eq:estimate} we have 
\begin{equation*}
	\begin{split}
		 \mu^L((-1,\delta)\times\R)&=	\int_{-1}^\delta\sum_{k\in \frac{\pi \mathbb{Z}}{L}} a^L(x,k)\dx\\
&	\le C\int_{-1}^0\bigg(\sum_{k\in \frac{\pi \mathbb{Z}}{L}} a^L(x,k)\bigg)^2\dx + C\int_{0}^\delta\bigg(\sum_{k\in \frac{\pi \mathbb{Z}}{L}}a^L(x,k)-x\bigg)^2\dx + C\int_0^\delta x^2\dx\\
&\le \frac{C}{L^2}+C\delta^3.
	\end{split}
\end{equation*}
This together with the lower semicontinuity with respect to the weak* convergence  give
\begin{equation*}
	 \mu((-1,0]\times\R)\le  \mu((-1,\delta)\times\R)\le 
	\liminf_{L\to \infty}\mu^L((-1,\delta)\times\R)\le C\delta^3.
\end{equation*}
By sending $\delta\to0$ we deduce
$
 \mu((-1,0]\times\R)=0.
$
It remains to show \eqref{eq:constr}. Given $\phi\in C_c^\infty(0,1)$ it holds
\begin{equation*}
\int_{(0,1)\times\R}\phi(x)\di\mu^L= \int_0^1\phi(x)\Big(\sum_{k\in \frac{\pi \mathbb{Z}}{L}} a^L(x,k)-2x\Big)\dx+ \int_0^12x\phi(x)\dx.
\end{equation*}
From \eqref{eq:est1} it follows that
\begin{equation*}
	\int_0^1\Big|\phi(x)\sum_{k\in \frac{\pi \mathbb{Z}}{L}} a^L(x,k)-2x\Big|\dx
	\le C\|\phi\|_\infty\bigg( \int_0^1\Big(\frac12\sum_{k\in \frac{\pi \mathbb{Z}}{L}} a^L(x,k)-x\Big)^2\dx\bigg)^{1/2}\le\frac{C}{L}\to0, 
\end{equation*}
as $L\to+\infty$, so that 
\begin{equation}\label{eq:constr1}
\lim_{L\to +\infty}\int_{(0,1)\times\R}\phi(x)\di\mu^L=  \int_0^12x\phi(x)\dx.
\end{equation}
Next we fix $R\ge1$ and take $\psi_R\in C^\infty_c(\R)$ such that  $0\le \psi_R\le1$, $\psi_R(k)\equiv 1$ if $|k|<R$ and $\psi_R(k)\equiv 0$ if $|k|>R+1$. We have
\begin{equation}\label{eq:cutoff}
	\int_{(0,1)\times\R}\phi(x)\di\mu^L=
	\int_{(0,1)\times\R}\phi(x)\psi_R(k)\di\mu^L+ 
		\int_{(0,1)\times\R}\phi(x)(1-\psi_R(k))\di\mu^L\,.
\end{equation}
The weak* convergence yields
\begin{equation*}
\lim_{L\to +\infty}	\int_{(0,1)\times\R}\phi(x)\psi_R(k)\di\mu^L=	\int_{(0,1)\times\R}\phi(x)\psi_R(k)\di\mu\,,
\end{equation*}
whereas for the second term on the right hand-side of \eqref{eq:cutoff} we get 
\begin{equation*}
	\begin{split}
		\int_{(0,1)\times\R}|\phi(x)(1-\psi_R(k))|\di\mu^L
		&\le\int_{0}^{1}|\phi(x)|\Big(\sum_{k\in \frac{\pi \mathbb{Z}}{L},|k|>R} a^L(x,k)
	\Big)\dx\\
	&\le\frac{\|\phi\|_\infty}{R^2}
	\int_{0}^{1}\Big(\sum_{k\in \frac{\pi \mathbb{Z}}{L},|k|>R} a^L(x,k)k^2
	\Big)\dx\\
	&\le \frac{\|\phi\|_\infty}{R^2}
	\int_{0}^{1}\fint_{-L}^L(u_{,yy})^2\dy\dx\le \frac{C}{R^2}\,,
	\end{split}
\end{equation*}
 where the last to inequalities follow from \eqref{plancherel-derivatives} and \eqref{eq:estimate}.
Thus passing to the limit as $L\to+\infty$ in \eqref{eq:cutoff} we obtain
\begin{equation*}
	\int_{(0,1)\times\R}\phi(x)\psi_R(k)\di\mu-\frac{C}{R^2}\le
\lim_{L\to+\infty}	\int_{(0,1)\times\R}\phi(x)\di\mu^L\le \int_{(0,1)\times\R}\phi(x)\psi_R(k)\di\mu+ \frac{C}{R^2}\,.
\end{equation*}
Eventually by letting $R\to+\infty$ we deduce 
\begin{equation*}
\lim_{L\to+\infty}	\int_{(0,1)\times\R}\phi(x)\di\mu^L=\int_{(0,1)\times\R}\phi(x)\di\mu\,,
\end{equation*}
which together with \eqref{eq:constr1} yield \eqref{eq:constr}.
\end{proof}
\begin{prop}[Lower bound]\label{prop:lower_bound}
	Let $\mathcal F_L$ and $\mathcal F_\infty$ be as in \eqref{def:F_L} and \eqref{def:F_infty} respectively. Let for $L>0$ be $(w^L,u^L)\subset \mathcal{A}_L^{\rm in}\times \mathcal{A}_L^{\rm out}$ a sequence converging to $\mu\in\mathcal{M}_\infty$ in the sense of Definition \ref{def:convergence}. Then there holds
	\begin{equation}\label{eq:lowe_bound}
		\liminf_{L\to \infty}\mathcal F_L(w^L,u^L)\ge \mathcal F_\infty(\mu).
	\end{equation}
\end{prop}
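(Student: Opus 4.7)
The plan is to discard the non-negative membrane-type and shear terms in $\mathcal F_L$, rewrite the remaining Hessian energy intrinsically in terms of the vector measure $\tilde\mu^L:=(\mu^L(u^L),\mu^L_{,x}(u^L))$, and conclude via Reshetnyak's lower semicontinuity theorem. Exactly as in~\eqref{eq:estimate}--\eqref{eq:est0} of the compactness proof, the first three integrals in~\eqref{def:F_L_explicit} together with the constant $-L^2/3$ combine to a non-negative quantity, hence
\begin{equation*}
\mathcal F_L(w^L,u^L)\ \ge\ \strokedint_{-L}^L\!\int_{-1}^{1}\!\bigl((u^L_{,x})^2+(u^L_{,yy})^2\bigr)\dx\dy.
\end{equation*}
Plancherel~\eqref{plancherel-derivatives} together with the identity~\eqref{derivatives-a} and the definitions of $\mu^L,\mu^L_{,x}$ then yields, after dropping the non-negative $k=0$ Fourier contribution, the bound
\begin{equation*}
\mathcal F_L(w^L,u^L)\ \ge\ \int_{(-1,1)\times\R}\!\! k^2\di\mu^L\ +\ \int_{(-1,1)\times\R}\!\frac{1}{4k^2}\Bigl(\frac{\di\mu^L_{,x}}{\di\mu^L}\Bigr)^{\!2}\di\mu^L.
\end{equation*}

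Next, define $f\colon((-1,1)\times\R)\times\R^2\to[0,+\infty]$ by
\begin{equation*}
f\bigl((x,k),(s,t)\bigr):=k^2 s+\frac{1}{4k^2}\cdot\frac{t^2}{s}\quad\text{for }s>0,\ k\neq 0,
\end{equation*}
and extend it by the perspective convention: $f=+\infty$ on $\{s<0\}\cup\{s=0,\,t\ne 0\}\cup\{k=0,\,t\ne 0\}$ and $f=0$ on $\{s=t=0\}\cup\{k=0,\,t=0\}$. Using the AM-GM inequality $k^2 s+\tfrac{t^2}{4k^2 s}\ge |t|$, one checks that $f$ is jointly lower semicontinuous, while for each $(x,k)$ the map $(s,t)\mapsto f((x,k),(s,t))$ is non-negative, convex (as the perspective of $t\mapsto t^2$), and positively $1$-homogeneous. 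Using the equivalent expression in Remark~\ref{rem:limit}\ref{rem:limit(ii)}, the lower bound above is precisely $\int f((x,k),\di\tilde\mu^L/\di|\tilde\mu^L|)\di|\tilde\mu^L|$, and the analogous formula gives $\mathcal F_\infty(\mu)=\int f((x,k),\di\tilde\mu/\di|\tilde\mu|)\di|\tilde\mu|$ (using $\mu((-1,0]\times\R)=0$, so integrating over $(-1,1)\times\R$ or $(0,1)\times\R$ is the same on the limit side).

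Since $\tilde\mu^L\stackrel{*}{\rightharpoonup}\tilde\mu$ in $\mathcal M_b((-1,1)\times\R;\R^2)$ by Definition~\ref{def:convergence}, Reshetnyak's lower semicontinuity theorem (cf.~\cite[Theorem~2.38]{AmFuPa:00}) applied on the open set $(-1,1)\times\R$ gives
\begin{equation*}
\liminf_{L\to\infty}\int\! f\Bigl((x,k),\tfrac{\di\tilde\mu^L}{\di|\tilde\mu^L|}\Bigr)\di|\tilde\mu^L|\ \ge\ \int\! f\Bigl((x,k),\tfrac{\di\tilde\mu}{\di|\tilde\mu|}\Bigr)\di|\tilde\mu|\ =\ \mathcal F_\infty(\mu),
\end{equation*}
which, combined with the previous reduction, proves~\eqref{eq:lowe_bound}. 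The main delicate point is the presence of the singular locus $\{k=0\}\cup\{s=0\}$ where $f$ is extended-real-valued; the cleanest workaround is to apply Reshetnyak to the truncated bounded integrand $f\wedge N$ on the bounded open subsets $\{(x,k):r<|k|<R\}$, and then send $N,R\to\infty$ and $r\downarrow 0$ by monotone convergence, exploiting the uniform bound on $\int k^2\di\mu^L$ from the energy estimate to rule out any mass escape to $\{k=0\}$ or to $|k|=\infty$.
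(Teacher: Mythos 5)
Correct, and essentially the same route as the paper: you discard the nonnegative terms exactly as in \eqref{eq:estimate}--\eqref{eq:est0}, pass to Fourier variables via \eqref{plancherel-derivatives} and \eqref{derivatives-a}, rewrite the resulting expression intrinsically as in Remark \ref{rem:limit} \ref{rem:limit(ii)}, and conclude by Reshetnyak lower semicontinuity, the only (immaterial) difference being that you apply Reshetnyak once to the combined integrand while the paper applies it separately to the two terms \eqref{eq:Resh1}--\eqref{eq:Resh2}. One small caveat: the truncation ``workaround'' in your last paragraph is both unnecessary and slightly off ($f\wedge N$ is no longer positively $1$-homogeneous, so Reshetnyak would not apply to it directly), whereas \cite[Theorem 2.38]{AmFuPa:00} already covers jointly lower semicontinuous $[0,+\infty]$-valued integrands that are convex and $1$-homogeneous in the measure variable -- which is exactly how the paper handles its own singular integrand $\tfrac{1}{4k^2}\,t^2/s$.
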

\begin{proof}
%
Let $(w^L,u^L)$ be as in the statement and let
$\mu^L:=\mu^L(u^L)$
and $\mu^L_{,x}:=\mu^L_{,x}(u^L)$ be defined accordingly to Definition \ref{def:muL}, that is,
\begin{equation*}
	\mu^L= \sum_{
		k\in \frac{\pi \mathbb{Z}}{L}} a^L(x,k)  \L^1\res{(-1,1)}\times \delta_k\,,
	\quad
	\mu^L_{,x}= \sum_{
		k\in \frac{\pi \mathbb{Z}}{L}} a_{,x}^L(x,k)  \L^1\res{(-1,1)}\times\delta_k\,.
\end{equation*} 
Recalling \eqref{eq:est0}, \eqref{plancherel-derivatives} and \eqref{derivatives-a} we have that
\begin{equation}\label{eq:lower_bound}
	\begin{split}
	\mathcal F_L(w^L,u^L)&\ge  \int_{-1}^1\fint_{-L}^L  (u^L_{,x})^2+(u^L_{,yy})^2\dy\dx\\
	&\ge\int_0^1 \Big(
	\sum_ {k\in \frac{\pi \mathbb{Z}}{L},k\ne0}\frac{1}{4k^2}\frac{({a}^L_{,x}(x,k))^2}{ a^L(x,k)}+\sum_ {k\in \frac{\pi \mathbb{Z}}{L}} a^L(x,k)k^2
	\Big)\dx
	\\
	&=\int_{(0,1)\times\R}\bigg( k^2+ \frac{1}{4k^2}\Big(\frac{\di\mu_{,x}^L}{\di\mu^L}(x,k)\Big)^2\bigg)\di\mu^L
	\\
	&=
	 \int_{(0,1)\times\R} k^2 \di\mu^L
	 + \int_{(0,1)\times\R} 
	\frac{1}{4k^2}\Bigl(\frac{\di \mu^L}{\di|\tilde\mu^L|}(x,k)\Bigr)^{-1}\Bigl(\frac{\di \mu^L_{,x}}{\di|\tilde\mu^L|}(x,k)\Bigr)^2 \di|\tilde\mu^L|\,,
	\end{split}
\end{equation}
where  $\tilde\mu^L:=(\mu^L,\mu^L_{,x})$, and the last equality follows from Remark \ref{rem:limit} \ref{rem:limit(ii)}.
 By Reshetnyak Theorem (cf. \cite[Theorem 2.38]{AmFuPa:00}) there hold
\begin{equation}\label{eq:Resh1}
\liminf_{L\to+\infty}
\int_{(0,1)\times\R} k^2 \di\mu^L\ge \int_{(0,1)\times\R} k^2 \di\mu \,,
\end{equation}
and 
\begin{equation}\label{eq:Resh2}
	\begin{split}
\liminf_{L\to+\infty}	&
\int_{(0,1)\times\R} 	\frac{1}{4k^2}
\Bigl(\frac{\di \mu^L}{\di|\tilde\mu^L|}(x,k)\Bigr)^{-1}\Bigl(\frac{\di \mu^L_{,x}}{\di|\tilde\mu^L|}(x,k)\Bigr)^2 \di|\tilde\mu^L|
\\ 
&\ge\int_{(0,1)\times\R} \frac{1}{4k^2}
\Bigl(\frac{\di \mu}{\di|\tilde\mu|}(x,k)\Bigr)^{-1}\Bigl(\frac{\di \mu_{,x}}{\di|\tilde\mu|}(x,k)\Bigr)^2 \di|\tilde\mu|\,,
	\end{split}
\end{equation}
with $\tilde\mu:=(\mu,\mu_{,x})$.
Gathering together \eqref{eq:lower_bound}, \eqref{eq:Resh1} and \eqref{eq:Resh2} we find
\begin{equation*}
	\begin{split}
			\liminf_{L\to \infty}\mathcal F_L(w^L,u^L)\ge &\int_{(0,1)\times\R} k^2 \di\mu +\int_{(0,1)\times\R} \frac{1}{4k^2}
			\Bigl(\frac{\di \mu}{\di|\tilde\mu|}(x,k)\Bigr)^{-1}\Bigl(\frac{\di \mu_{,x}}{\di|\tilde\mu|}(x,k)\Bigr)^2 \di|\tilde\mu|
			 =\mathcal{F}_\infty(\mu)\,.
	\end{split}
\end{equation*}
\end{proof}
\section{Upper bound}\label{sec:upb}
In this section we prove the $\Gamma-\limsup$ inequality.
\begin{prop}[Upper bound]\label{prop:upb}
	Let $\mu\in \mathcal{M}_\infty$. Then for $L>0$ there exists a sequence $(w^L,u^L)\in \mathcal{A}_L^{\rm in}\times \mathcal{A}_L^{\rm out}$ that converges
 to $\mu\in\mathcal{M}_\infty$ in the sense of Definition \ref{def:convergence} and such that
 \begin{equation*}
 	\limsup_{L\to\infty}\F_L(w^L,u^L)\le \F_\infty(\mu)\,,
 \end{equation*}
with $\F_L$ and $\F_\infty$ defined as in \eqref{def:F_L} and \eqref{def:F_infty} respectively.
\end{prop}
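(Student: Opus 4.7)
My plan is to construct the recovery sequence $(w^L,u^L)$ directly, following the two-step scheme announced in the introduction. By Corollary~\ref{cor:disint=pushfwd}, write $\di\mu(x,k) = \di\nu_x(k)\cdot 2x\,\dx$ with $x\mapsto\nu_x$ a measurable family of probability measures on $\R$. By a preliminary density argument I reduce to the case when $\mu$ is supported in $(0,1)\times[-K,K]$ and $\di\mu_{,x}/\di\mu\in L^\infty(\mu)$: the tail $\{|k|>K\}$ contributes at most $K^{-2}\int k^4\di\mu$ to the $k^2$-part of $\mathcal{F}_\infty$, and the small lost mass can be redistributed onto a single fixed frequency $k_0$ so as to preserve the constraint in~\eqref{def:limit_measures}; a further mollification in $x$ regularises the Radon--Nikod\'ym density while, by convexity of $(p,q)\mapsto q^2/p$ and Reshetnyak semicontinuity, not increasing $\mathcal{F}_\infty$ in the limit.

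\textbf{Discretisation in $k$, dilation, and mollification.} For each $L$, partition $[-K,K]$ into half-open intervals $I_k:=[k-\tfrac{\pi}{2L},k+\tfrac{\pi}{2L})$ centred at grid points $k\in\tfrac{\pi\mathbb{Z}}{L}$, and define the discrete squared Fourier amplitudes
\begin{equation*}
b_k^L(x):=2x\,\nu_x(I_k),\qquad\text{so that}\quad\sum_{k}b_k^L(x)=2x\quad\text{exactly on }(0,1),
\end{equation*}
matching the constraint. The associated measure $\tilde\mu^L:=\sum_k b_k^L\,\mathcal{L}^1\res(0,1)\otimes\delta_k$ weak-$\ast$ converges to $\mu$ by construction. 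Since $b_k^L$ inherits only limited regularity in $x$ from $\mu_{,x}\ll\mu$, I then extend each $b_k^L$ from $[0,1]$ to a slightly larger interval via a dilation of the form $x\mapsto x/(1+\delta_L)$ (which keeps the zero at $x=0$ aligned while moving the right endpoint outside $[0,1]$, so that mollification near $x=1$ does not pull in ``missing'' data), and convolve in $x$ with a standard mollifier at scale $\eps=\eps(L)$ to obtain smooth positive coefficients $b_k^{L,\eps}$. Set $a_k^{L,\eps}(x):=\sqrt{b_k^{L,\eps}(x)}/|k|$ and
\begin{equation*}
u^L(x,y):=\sum_{k\in\tfrac{\pi\mathbb{Z}}{L},\,0<|k|\le K}a_k^{L,\eps}(x)\sqrt{2}\,\psi_k(y),
\end{equation*}
with $\psi_k(y)=\sin(ky)$ for $k>0$ and $\cos(ky)$ for $k<0$, as in~\eqref{eq:fourier_repr}.

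\textbf{In-plane displacement and leading-order term.} I choose the in-plane displacement so as to kill the $L^2$-prefactored membrane terms. Set $w_{2,y}^L:=\fint_{-L}^L\tfrac{(u_{,y}^L)^2}{2}\,\dy-\tfrac{(u_{,y}^L)^2}{2}$, which has zero $y$-mean and hence admits a $2L$-periodic antiderivative; the second integral in $\mathcal{F}_L$ then reduces to $L^2\int_0^1\!\bigl(x-\tfrac12\sum_k b_k^{L,\eps}(x)\bigr)^2\dx$ by Plancherel~\eqref{plancherel-derivatives}, which is $O(L^2\delta_L^2)$ by the exact constraint. Similarly, $w_1^L$ is chosen to handle the first membrane term, and the cross term $u_{,x}^L u_{,y}^L$ in the third integral is absorbed into $L^2 w_{1,y}^L$ by integrating in $y$. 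The dominant fourth term equals, via~\eqref{plancherel-derivatives}--\eqref{derivatives-a},
\begin{equation*}
\int_0^1\sum_{k}\Bigl(k^2 b_k^{L,\eps}(x)+\tfrac{1}{4k^2}\tfrac{((b_k^{L,\eps})')^2}{b_k^{L,\eps}(x)}\Bigr)\dx,
\end{equation*}
whose $\limsup$ as $L\to\infty$ is bounded by $\mathcal{F}_\infty(\mu)$, using the convexity of $(p,q)\mapsto q^2/p$ (so that convolution does not increase the Fisher-information-type density) together with the Radon--Nikod\'ym identification in~\eqref{def:F_infty2}.

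\textbf{Main obstacle.} The crux is the simultaneous tuning of the parameters $\eps(L),\,\delta_L,\,K(L)$ so that every non-leading term in $\mathcal{F}_L$ is $o(1)$ while the leading convex quantity above actually converges (and not just $\liminf$-inequality-converges) to $\mathcal{F}_\infty(\mu)$. The competing constraints are: the higher-order bending remainders $L^{-2}\int(u_{,xy}^L)^2+L^{-4}\int(u_{,xx}^L)^2$ scale like inverse powers of $\eps$ (from the two $x$-derivatives hitting the mollified $b_k^{L,\eps}$) and force $\eps\gg L^{-1}$; the $L^2$-weighted constraint error forces $\delta_L\ll L^{-1}$, which together with the requirement $\delta_L\gtrsim\eps$ that dilation genuinely shelter the mollification is the most delicate point, likely demanding a more refined extension procedure or joint choice that exploits the mismatch being linear in $x$ and hence partially cancelled by the symmetric mollifier; and finally the convergence $(\mu^L(u^L),\mu^L_{,x}(u^L))\stackrel{*}{\rightharpoonup}(\mu,\mu_{,x})$ in Definition~\ref{def:convergence} must be verified a posteriori. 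Extracting this parameter window is precisely the content of the discretisation and mollification lemmas alluded to in the introduction, and is the heart of the construction.
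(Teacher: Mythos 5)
Your blueprint coincides with the first stage of the paper's construction (discretise $\mu$ in $k$ via $b_k^L(x)=2x\,\nu_x(I_k)$, dilate in $x$, mollify the squared coefficients, take $a_k=\sqrt{b_k}/|k|$, and choose $w_2,w_1$ by integrating in $y$ so that the third membrane term vanishes identically), i.e.\ essentially Lemmas~\ref{lem:extension}--\ref{lem:moll}. However, the part you label ``main obstacle'' is not a technicality but the actual content of the proof, and in the form you set it up the parameter window is empty. Since your coefficients vanish at $x=0$ (forced by the constraint), mollification at scale $\varepsilon$ produces a deficit of order $\varepsilon$ on an $x$-interval of length $\sim\varepsilon$, so your second membrane term is of order $L^2\varepsilon^3$, not $O(L^2\delta_L^2)$; this forces $\varepsilon\ll L^{-2/3}$. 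On the other hand, with $w_1^L=x-\frac{1}{L^2}\int_0^y(w^L_{2,x}+u^L_{,x}u^L_{,y})\,{\rm d}y'$ and $u^L$ only $2L$-periodic, the first membrane term $L^2\int(w^L_{1,x}-1)^2$ involves iterated $y$-integrals of products of second derivatives over a period of length $L$, and after Cauchy--Schwarz it is bounded below in order by $L^2$ times inverse powers of $\varepsilon$ (and of the cutoff scale), which cannot tend to zero when $\varepsilon\ll L^{-2/3}$; so the two requirements are incompatible, exactly the tension you concede but do not resolve. The paper closes this with three devices absent from your proposal: (i) the constraint is enforced \emph{exactly} away from a $\delta$-neighbourhood of $x=0$ by multiplying $\hat u^L$ by $\psi_\delta(x)f^L(x)$ with $f^L=\sqrt{x/A^L(x)}$, so the second membrane error is only $L^2\delta^3$ with $\delta$ decoupled from $\varepsilon$ (this is why $\varepsilon$ may, and must, be taken $\gg L^{-2/3}$); (ii) $u^L$ is built $2L_0$-periodic with $L_0=L/n(L)\to\infty$ much slower than $L$, which turns the dangerous term into $\lesssim (L_0^4/L^2)\,\delta^{-2}\varepsilon^{-1}$ --- with period $2L$ this estimate reads $L^2\delta^{-2}\varepsilon^{-1}$ and diverges for any admissible choice; (iii) the exponential kernel $\rho_\varepsilon$ with $|\dot\rho_\varepsilon|=\rho_\varepsilon/\varepsilon$ and the joint choice $\varepsilon=L^{-2/3}M^{7/8}$, $\delta=\varepsilon/M$, $L_0\sim M^{1/8}$, with $M=M(L)$ growing slowly so that $\omega(2M^2\delta)\log M\to0$.

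Two further gaps: the convergence $(\mu^L(u^L),\mu^L_{,x}(u^L))\stackrel{*}{\rightharpoonup}(\mu,\mu_{,x})$ required by Definition~\ref{def:convergence} is only ``to be verified a posteriori'' in your text, whereas it needs a genuine argument (in the paper it is part of Lemma~\ref{lem:moll} and Step~1 of the proof, and it is here that the correction factor $\psi_\delta f^L$ must be shown not to distort the measures); and your preliminary reduction (truncation in $k$ with mass redistributed to a fixed frequency, plus mollification to make $\di\mu_{,x}/\di\mu\in L^\infty$) is itself unproved --- the redistributed mass and the mollified measure must be shown to keep both the constraint and the Fisher-type term under control --- and is not needed in the paper's argument, which works directly with any $\mu\in\mathcal M_\infty$ of finite energy via the Jensen-type inequality in Lemma~\ref{lem:discretisation}.
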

\noindent
We 
divide the proof of Proposition \ref{prop:upb} into a number of steps.
For any $\lambda\ge1$ we define the following class of measures
\begin{equation}\label{def:limit_measures_lambda}
	\begin{split}
		\mathcal{M}_\infty^\lambda:=\bigg\{\mu\in &\M_b^+((0,\lambda)\times\R) \colon\mu_{,x}\in\M_b((0,\lambda)\times\R)\,,\quad \mu_{,x}\ll\mu\,,\\ 
		&	\int_{0}^\lambda	2x\phi(x)dx=\int_{(0,\lambda)\times\R}\phi(x)\di\mu(k,x)\quad \forall\phi\in C_c^{\infty}(0,\lambda)
		\bigg\}\,,
	\end{split}
\end{equation}
and the functional $\mathcal F_\infty^\lambda\colon \M_\infty^\lambda \to[0,+\infty]$
\begin{equation}\label{def:F_infty-lambda}
	\mathcal F_\infty^\lambda(\mu)= \int_{(0,\lambda)\times\R} \biggl[
	k^2 +  
	\frac{1}{4k^2} \Bigl(\frac{\di \mu_{,x}}{\di\mu}\Bigr)^2 \biggr]\di\mu\,.
\end{equation} Then we have $\mathcal{M}_\infty=\mathcal M_\infty^1$ and $\mathcal F_\infty=\mathcal F_\infty^1$.
\begin{lemma}[Dilation of $\mu$]\label{lem:extension}
	Let $\mu\in \mathcal{M}_\infty$.  Then for each $\lambda\in(1,2)$ there exists $\mu_\lambda\in	\mathcal{M}_\infty^\lambda$ such that  
		\begin{equation}\label{eq:lim_lambda}
		(\mu_\lambda,\mu_{\lambda,x})\res((0,1)\times\R) \stackrel{*}{\rightharpoonup}(\mu,\mu_{,x})\quad\text{as }\lambda\searrow1\,,
	\end{equation}
	and 
	\begin{equation}\label{eq:est_F_lambda}
		\int_{(0,\lambda)\times\R}k^2\di\mu_\lambda= \lambda^2 	\int_{(0,1)\times\R}k^2\di\mu\,,\quad 
			\int_{(0,\lambda)\times\R}\frac{1}{4k^2}\Bigl(\frac{\di \mu_{\lambda,x}}{\di\mu_\lambda}\Bigr)^2 \di\mu_\lambda= \int_{(0,1)\times\R}\Bigl(\frac{\di \mu_{,x}}{\di\mu}\Bigr)^2 \di\mu
	\end{equation}
	so that, in particular
	\begin{equation}\label{eq:lim_F_lambda}
	\lim_{\lambda\searrow1}	\F_\infty^\lambda\big(\mu_\lambda\big)=  \F_\infty(\mu)\,.
	\end{equation}
Moreover $(\nu_{\frac{x}{\lambda}},2x)_{x\in(0,\lambda)}$ is a disintegration of $\mu_\lambda$ where $\nu_x\in\mathcal{M}_b^+(\R)$ for $x\in(0,1)$ is the measure given by Corollary \ref{cor:disint=pushfwd}.
\end{lemma}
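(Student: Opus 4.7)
The plan is to define $\mu_\lambda$ as a suitably-rescaled push-forward of $\mu$ under the dilation $T_\lambda(x,k) := (\lambda x, k)$. Concretely, set
\begin{equation*}
\mu_\lambda := \lambda^2 \, (T_\lambda)_\# \mu.
\end{equation*}
Using the disintegration $(\nu_x, 2x)_{x\in(0,1)}$ of $\mu$ supplied by Corollary~\ref{cor:disint=pushfwd}, the change of variables $y = \lambda x$ immediately yields $d\mu_\lambda(y,k) = d\nu_{y/\lambda}(k)\cdot 2y\, dy$ on $(0,\lambda)\times\R$. This simultaneously delivers the disintegration claimed at the end of the statement, the positivity $\mu_\lambda \in \mathcal{M}_b^+((0,\lambda)\times\R)$, and the constraint in the definition of $\mathcal{M}_\infty^\lambda$.

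Next I would compute the distributional derivative. For $\varphi \in C_c^\infty((0,\lambda)\times\R)$, writing $\psi(x,k) := \varphi(\lambda x, k)$ so that $\psi_{,x} = \lambda\, \varphi_{,x}(\lambda x, k)$, a direct manipulation gives $\mu_{\lambda,x} = \lambda\,(T_\lambda)_\# \mu_{,x}$. Hence $\mu_{\lambda,x} \in \mathcal{M}_b((0,\lambda)\times\R)$. Moreover, writing $f := d\mu_{,x}/d\mu$ (which exists by $\mu_{,x}\ll\mu$) and using that the Radon--Nikod\'ym density transforms under push-forward as composition with $T_\lambda^{-1}$, one obtains
\begin{equation*}
\frac{d\mu_{\lambda,x}}{d\mu_\lambda}(y,k) \;=\; \frac{1}{\lambda}\, f\!\left(\frac{y}{\lambda},k\right),
\end{equation*}
which both establishes $\mu_{\lambda,x}\ll \mu_\lambda$ (so $\mu_\lambda \in \mathcal{M}_\infty^\lambda$) and is the key identity driving the scaling formulas \eqref{eq:est_F_lambda}. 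Plugging this into $\int \frac{1}{4k^2}(d\mu_{\lambda,x}/d\mu_\lambda)^2 d\mu_\lambda$ and undoing the change of variables $y = \lambda x$ gives the second identity; the first identity in \eqref{eq:est_F_lambda} is even simpler since $k$ is untouched by $T_\lambda$, so only the $\lambda^2$ prefactor remains. The limit \eqref{eq:lim_F_lambda} follows by letting $\lambda\searrow 1$.

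For \eqref{eq:lim_lambda}, I would argue directly from the definition. Given $\varphi \in C_c((0,1)\times\R)$, for $\lambda$ sufficiently close to $1$ we have $\varphi \in C_c((0,\lambda)\times\R)$ and
\begin{equation*}
\int \varphi \, d\mu_\lambda \;=\; \lambda^2 \int \varphi(\lambda x, k) \, d\mu(x,k) \;\longrightarrow\; \int \varphi\, d\mu,
\end{equation*}
by dominated convergence (uniform continuity of $\varphi$ on compacts plus $\mu((0,1)\times\R) < \infty$). The analogous statement for $\mu_{\lambda,x}$ follows from $\mu_{\lambda,x} = \lambda (T_\lambda)_\# \mu_{,x}$ and the same argument, using boundedness of $|\mu_{,x}|$.

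There is no real analytical obstacle here; the content of the lemma is bookkeeping. The only delicate point is selecting the scaling factor so that \emph{both} the linear-in-$x$ constraint and the two homogeneous energy terms transform consistently. The $\lambda^2$ prefactor in $\mu_\lambda := \lambda^2 (T_\lambda)_\#\mu$ is precisely what compensates the Jacobian of $T_\lambda$ so that the constraint $\int\phi\, d\mu_\lambda = \int_0^\lambda 2x\,\phi(x)\, dx$ holds; from there the remaining identities and convergences fall out by elementary change-of-variable computations.
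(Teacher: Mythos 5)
Your proposal is correct and follows essentially the same route as the paper: the paper's definition of $\mu_\lambda$ via duality is exactly your scaled push-forward $\lambda^2(T_\lambda)_\#\mu$, and the computations of $\mu_{\lambda,x}=\lambda(T_\lambda)_\#\mu_{,x}$, the Radon--Nikod\'ym density $\frac{1}{\lambda}\frac{\di\mu_{,x}}{\di\mu}(\tfrac{x}{\lambda},k)$, the constraint, the weak-$*$ convergence, and the disintegration all match the paper's argument (with the understood factor $\tfrac{1}{4k^2}$ on both sides of the second identity in \eqref{eq:est_F_lambda}).
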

\begin{proof}
	Let $\mu_\lambda\in\mathcal{M}_b^+((0,\lambda)\times\R)$ be defined via duality as follows
\begin{equation}\label{eq:ext_mu}
	\int_{(0,\lambda)\times\R}\psi(x,k)\di\mu_\lambda=\lambda^2\int_{(0,1)\times\R}\psi(\lambda x,k)\di\mu\,,
\end{equation}
for every $\psi\in C((0,\lambda)\times\R)$.    Notice that $\mu_{\lambda,x}\in\mathcal{M}_b((0,\lambda)\times\R)$ and is given by
\begin{equation}\label{eq:ext_mudot}
	\int_{(0,\lambda)\times\R}\psi(x,k)\di \mu_{\lambda,x}=\lambda\int_{(0,1)\times\R}\psi(\lambda x,k)\di\mu_{,x}\,.
\end{equation}
Hence $\mu_{\lambda,x}\ll\mu_\lambda$ and 
\begin{equation*}
	\frac{\di\mu_{\lambda,x}}{\di\mu_\lambda}(x,k)=\frac1\lambda\frac{\di\mu_{,x}}{\di\mu}\Big(\frac x{\lambda},k\Big)\,.
\end{equation*}
Moreover \eqref{eq:ext_mu} together with the change of variable $ x=\lambda \hat x$  imply 
\begin{equation*}
		\int_{0}^\lambda	2x\phi(x)\dx=\int_{(0,\lambda)\times\R}\phi(x)\di\mu_\lambda\quad \forall\phi\in C_c^{\infty}(0,\lambda)\,.
\end{equation*}
It follows that $\mu_\lambda\in	\mathcal{M}_\infty^\lambda$.
Moreover from \eqref{eq:ext_mu} and \eqref{eq:ext_mudot} we deduce that 
\begin{equation*}
	(\mu_\lambda,\mu_{\lambda,x})\res(\R\times(0,1)) \stackrel{*}{\rightharpoonup}(\mu,\mu_{,x})\quad\text{as }\lambda\to1\,,
\end{equation*}
and \eqref{eq:est_F_lambda} which implies \eqref{eq:lim_F_lambda}.
The fact that
$(\nu_{\frac{x}{\lambda}},2x)_{x\in(0,\lambda)}$ is a disintegration of $\mu_\lambda$ follows again by a change of variable.
\end{proof}
\begin{lemma}[Discretisation of $\mu$]\label{lem:discretisation}
	Let $\mu\in \mathcal{M}_\infty$ with $\F_\infty(\mu)<+\infty$ and let $\lambda=\lambda(L)\searrow 1$ as $L\to\infty$. Then there exists   $(\mu^L)\subset \mathcal{M}_\infty^\lambda$ with the following properties: 
	\begin{enumerate}[label=$(\roman*)$]
		\item \label{(i)discr}$	\mu^L=\sum_{
			k\in \frac{\pi \mathbb{Z}}{L}} \overline b^L(x,k)  \L^1\res{(0,\lambda)}\times\delta_k$ 
		with 
		\begin{equation}\label{eq:constraint}
		\overline 	b^L(\cdot,k)\in W^{1,1}(0,\lambda)\quad \text{and}\quad
			\sum_{
				k\in \frac{\pi \mathbb{Z}}{L}}\overline  b^L(x,k)=2x\,,\quad \forall x\in (0,\lambda)\,,
		\end{equation}
		\begin{equation}\label{eq:energy}
			\F_\infty^\lambda(\mu^L)= \int_0^\lambda	\sum_{
				k\in \frac{\pi \mathbb{Z}}{L}} k^2\overline  b^L(x,k)\dx+ 
			\int_0^\lambda	\sum_{
				k\in \frac{\pi \mathbb{Z}}{L},k\ne0}\frac{1}{4k^2}\frac{(\overline b^L_{,x}(x,k))^2}{\overline  b^L(x,k)}\dx\,;
		\end{equation}
		\item\label{(ii)discr} $
		(\mu^L,\mu^L_{,x})\res((0,1)\times\R)\stackrel{*}{\rightharpoonup}(\mu,\mu_{,x})
		$;
		\item\label{(iii)discr} $	\limsup_{L\to\infty}\F_\infty^\lambda(\mu^L)\le	\F_\infty\big(\mu\big)$.
	\end{enumerate}
\end{lemma}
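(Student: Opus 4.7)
My plan is to dilate $\mu$ in $x$ via Lemma~\ref{lem:extension}, discretise in $k$ by pushing forward along a nearest-grid-point projection, and control the energy by Cauchy--Schwarz.

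First I would apply Lemma~\ref{lem:extension} to obtain, for each $L$, the dilation $\mu_\lambda\in\mathcal M_\infty^\lambda$ with disintegration $(\nu_{x/\lambda},2x)_{x\in(0,\lambda)}$, so that $\mathcal F_\infty^\lambda(\mu_\lambda)\to\mathcal F_\infty(\mu)$. Next I would pick a measurable map $T_L\colon\R\to\frac{\pi\mathbb Z}{L}$ sending $k$ to a nearest grid point, tuned so that $T_L^{-1}(\{0\})=\emptyset$ (for instance, assigning $(-\pi/(2L),\pi/(2L))$ to $\pm\pi/L$ according to the sign of $k$). Setting $I_i:=T_L^{-1}(\{k_i\})$, define
$$\mu^L:=(\mathrm{id}\times T_L)_\sharp\mu_\lambda,$$
so that $\mu^L=\sum_i\overline b^L(x,k_i)\,\mathcal L^1\res(0,\lambda)\otimes\delta_{k_i}$ with $\overline b^L(x,k_i)=2x\,\nu_{x/\lambda}(I_i)$; the constraint $\sum_i\overline b^L(x,k_i)=2x$ is then immediate from $\nu_{x/\lambda}(\R)=1$ (Corollary~\ref{cor:disint=pushfwd}). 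Because $T_L$ does not depend on $x$, push-forward commutes with the $x$-derivative, giving $\mu^L_{,x}=(\mathrm{id}\times T_L)_\sharp\mu_{\lambda,x}\ll\mu^L$ with
$$\overline b^L_{,x}(x,k_i)=2x\int_{I_i}\rho(x,k)\,d\nu_{x/\lambda}(k),\qquad \rho:=\tfrac{d\mu_{\lambda,x}}{d\mu_\lambda},$$
which is in $L^1(0,\lambda)$ by $|\mu_{\lambda,x}|<\infty$; hence $\overline b^L(\cdot,k_i)\in W^{1,1}(0,\lambda)$, settling \ref{(i)discr}. For \ref{(ii)discr}, for $\varphi\in C_c((0,1)\times\R)$ one has $\int\varphi\,d\mu^L=\int\varphi(x,T_L(k))\,d\mu_\lambda$, and since $|T_L(k)-k|\le\pi/(2L)\to0$ the uniform continuity of $\varphi$ combined with the weak-$*$ convergence from Lemma~\ref{lem:extension} gives $\mu^L|_{(0,1)\times\R}\stackrel{*}{\rightharpoonup}\mu$; an identical argument applies to $\mu^L_{,x}$.

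The main analytic step is \ref{(iii)discr}. The first term satisfies $\int k^2\,d\mu^L=\int T_L(k)^2\,d\mu_\lambda\to\int k^2\,d\mu$ by dominated convergence (using $T_L(k)^2\le 4k^2+C$ and the dilation formulas of Lemma~\ref{lem:extension}). For the second term, Cauchy--Schwarz on the probability measure $\nu_{x/\lambda}(\cdot\cap I_i)/\nu_{x/\lambda}(I_i)$ yields
$$\frac{(\overline b^L_{,x}(x,k_i))^2}{\overline b^L(x,k_i)}=2x\,\frac{\bigl(\int_{I_i}\rho\,d\nu_{x/\lambda}\bigr)^2}{\nu_{x/\lambda}(I_i)}\le 2x\int_{I_i}\rho^2\,d\nu_{x/\lambda},$$
whence
$$\int\tfrac{1}{4k^2}\Bigl(\tfrac{d\mu^L_{,x}}{d\mu^L}\Bigr)^{\!2}d\mu^L \le \int \tfrac{1}{4T_L(k)^2}\,\rho^2\,d\mu_\lambda.$$
Rewriting the right-hand side as an integral against $\mu$ via Lemma~\ref{lem:extension}, and using the pointwise domination $1/T_L(k)^2\le 9/(4k^2)$ (which holds because $|T_L(k)|\ge \tfrac23|k|$ once $T_L$ avoids $0$), dominated convergence produces $\int\tfrac{1}{4k^2}(\tfrac{d\mu_{,x}}{d\mu})^2d\mu$ in the limit, and summing the two limits yields $\mathcal F_\infty(\mu)$.

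The main obstacle is the exclusion of $k_i=0$ in the second sum of~\eqref{eq:energy}: had any mass of $\mu_\lambda$ near $0$ been pushed to $k_i=0$, the expression $\tfrac{1}{4k_i^2}\tfrac{(\overline b^L_{,x})^2}{\overline b^L}$ would become ill-defined or divergent. The small adjustment of $T_L$ to avoid $0$ sidesteps this at the price of only a bounded multiplicative constant in the domination $1/T_L(k)^2\lesssim 1/k^2$, which is harmless for the dominated-convergence passage.
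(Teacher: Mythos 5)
Your construction is correct and is essentially the paper's own proof: both dilate $\mu$ via Lemma~\ref{lem:extension}, discretise by giving each grid point $k\in\frac{\pi\mathbb{Z}}{L}$, $k\neq 0$, the weight $\overline b^L(x,k)=2x\,\nu_{x/\lambda}(\text{bin of width }\pi/L)$, and control the derivative term by Jensen/Cauchy--Schwarz on each bin. The only deviations are bookkeeping: you use nearest-point bins that avoid $k=0$ and close both limits by dominated convergence after transferring the integrals to the fixed measure $\mu$, whereas the paper uses the one-sided bins $I_k^L$ (so that $1/k^2\le 1/\hat k^2$ holds pointwise, with no constant) together with a $(1+\delta)$-splitting for the $k^2$ term; both variants are harmless.
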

\begin{proof}
For each $L>1$ let $\mu_\lambda\in\mathcal{M}_\infty^\lambda$ be the measure given by Lemma \ref{lem:extension} and let $(\nu_{\frac{x}{\lambda}},2x)_{x\in (0,\lambda)}$ be the corresponding disintegration. We then define $\mu^L\in\mathcal{M}_b^+((0,\lambda)\times\R)$ as
	\begin{equation}
		\mu^L:= \sum_{
			k\in \frac{\pi \mathbb{Z}}{L}}\overline  b^L(x,k) \L^1\res{(0,\lambda)}\times\delta_k\,,
	\end{equation}
	where for $(x,k)\in(0,\lambda)\times \frac{\pi \mathbb{Z}}{L}$ we set
	\begin{equation}\label{def:coeff}
	\overline 	b^L(x,k):=\begin{cases}
			0&\text{if }k=0\,,\\[1em]
			2x\nu_{\frac{x}{\lambda}}(I_k^L)&\text{if } k\ne0\,,
		\end{cases}
		\quad\text{and}\quad
		I^L_k:=\begin{cases}
			(k-\frac{\pi}{L},k]&\text{if }k>0\,,\\[1em]
			[k,k+\frac{\pi}{L})&\text{if }k<0\,.
		\end{cases}
	\end{equation}
	Now, for each $k\in\frac{\pi \mathbb{Z}}{L}$, $\overline  b^L(\cdot,k)\in W^{1,1}(0,\lambda)$ with 
	\begin{equation*}
	\overline 	b_{,x}^L(x,k)=\begin{cases}
			0&\text{if }k=0\,,\\[1em]
			\displaystyle2x\int_{I_k^L}\frac{\di\mu_{\lambda,x}}{\di\mu_\lambda}(x,\hat k)\di\nu_{\frac{x}{\lambda}}(\hat k)&\text{if } k\ne0\,.
		\end{cases}
	\end{equation*}
	Indeed if $k=0$ there is nothing to prove. If instead $k\ne0$, for $\phi\in C^\infty_c(0,\lambda)$ from the definition of $\nu_{\frac{x}{\lambda}}$ and recalling Remark \ref{rem:int-byparts} we get
	\begin{equation*}
		\begin{split}
			\int_0^\lambda &\overline  b^L(x,k)\dot\phi(x)\dx=\int_0^\lambda\int_{\R}\mathbbm{1}_{I_k^L}(\hat k)\dot\phi(x)
			\di\nu_\frac{x}{\lambda}(\hat k)\,2x\dx\\
			&=\int_{(0,\lambda)\times\R}\mathbbm{1}_{I_k^L}(\hat k)\dot\phi(x)\di\mu_\lambda
			 =- \int_{(0,\lambda)\times\R}\mathbbm{1}_{I_k^L}(\hat k)\phi(x)\di\mu_{\lambda,x}\\
			&=- \int_{(0,\lambda)\times\R}\mathbbm{1}_{I_k^L}(\hat k)\phi(x)\frac{\di\mu_{\lambda,x}}{\di\mu_\lambda}(x,\hat k)
			\di\mu_{\lambda}
		=-\int_0^\lambda\int_{I_k^L}\frac{\di\mu_{\lambda,x}}{\di\mu_\lambda}(x,\hat k)
			\di\nu_\frac{x}{\tilde\lambda}(\hat k)\,2x\phi(x)\dx\,;
		\end{split}
	\end{equation*}
furthermore by Young's inequality and \eqref{eq:lim_F_lambda}
\begin{equation*}
	\begin{split}
			\int_0^\lambda|\overline b_{,x}^L(x,k)|\dx\le
		\int_{ I_k^L\times(0,\lambda)}\left|\frac{\di\mu_{\lambda,x}}{\di\mu_\lambda}\right|\di\mu_\lambda
		\le \frac12
		\int_{I_k^L\times(0,\lambda)} \left(k^2 +
	\frac1{k^2}\left(\frac{\di\mu_{\lambda,x}}{\di\mu_\lambda}\right)^2\right)\di\mu_\lambda\le C\,.
	\end{split}
\end{equation*}
Thus in particular 
\begin{equation*}
	\int_0^\lambda\sum_{k\in\frac{\pi\Z}{L}}|\overline b_{,x}^L(x,k)|\dx\le\mathcal{F}_\infty^\lambda(\mu_\lambda)\le C\,.
\end{equation*}
	As a consequence we have that $\mu_{,x}^L\in\mathcal{M}_b((0,\lambda)\times\R)$, and
	\begin{equation*}
		\mu_{,x}^L= \sum_{
			k\in \frac{\pi \mathbb{Z}}{L}}\overline  b_{,x}^L(x,k)  \L^1\res{(0,\lambda)}\times\delta_k\,,
	\end{equation*}
	and by Remark \ref{rem:int-byparts} \ref{rem-ii} $\tilde\mu_{,x}^L\ll\tilde\mu^L$.
	Moreover, as $\nu_{\frac{x}{\lambda}}$ is a probability measure, there holds
	\begin{equation*}
		\sum_{
			k\in \frac{\pi \mathbb{Z}}{L}}\overline  b^L(x,k)=2x \bigg(\sum_{
			k\in \frac{\pi \mathbb{Z}}{L},k\ne0} \nu_{\frac{x}{\lambda}}(I_k^L)\bigg)=2x\,,\quad \forall x\in (0,\lambda)\,.
	\end{equation*}
	Note in particular that $\mu^L\in\mathcal{M}_\infty^\lambda$. Moreover \eqref{eq:energy} readily follows and \ref{(i)discr} is proved. We next show \ref{(ii)discr}.
	Take $\varphi\in C^\infty_c((0,1)\times\R)$, so that from \eqref{def:coeff} we obtain
	\begin{equation}\label{eq:show1}
		\begin{split}
			\int_{(0,1)\times\R}\varphi\di\mu^L
			&=\int_0^1
			\sum_{
				k\in \frac{\pi \mathbb{Z}}{L},k\ne0}\varphi(x,k) \nu_{\frac{x}{\lambda}}(I_k^L)\,2x\dx\\
			& = \int_0^1
			\sum_{
				k\in \frac{\pi \mathbb{Z}}{L},k\ne0} \int_{I_k^L}
			\big(	\varphi(x,k)-\varphi(x,\hat k)\big)\di\nu_{\frac{x}{\lambda}}(\hat k)
			\,	2x\dx+\int_{(0,1)\times\R}\varphi\di\mu_\lambda\,.
		\end{split}
	\end{equation}
	Since $\varphi$ is uniformly continuous for every $\varepsilon>0$ there is $L_0>1$ such that for all $L\ge L_0$ 
	\begin{equation*}
		|	\varphi(x,k)-\varphi(x,\hat k)|<\varepsilon\quad\forall x\in(0,\lambda)\,,\  \forall k\in\frac{\pi\mathbb Z}{L},\ \forall\hat{k}\in I_k^L \,,
	\end{equation*}
	from which we readily deduce that 
	\begin{equation}\label{eq:show2}
		\int_0^1
		\sum_{
			k\in \frac{\pi \mathbb{Z}}{L},k\ne0} \int_{I_k^L}
		\big|	\varphi(x,k)-\varphi(x,\hat k)\big|\di\nu_{\frac{x}{\lambda}}(\hat k)
		\,2x\dx\le \mu_\lambda((0,\lambda)\times\R)\varepsilon=\lambda^2 \mu((0,1)\times\R)\varepsilon\,.
	\end{equation}
	From \eqref{eq:show1}, \eqref{eq:show2},\eqref{eq:lim_lambda} and the arbitrariness of $\varepsilon$ we infer $\mu^L\res((0,1)\times\R)\stackrel{*}{\rightharpoonup}\mu$ as $L\to+\infty$.  By analogous arguments we get $\mu_{,x}^L\res((0,1)\times\R)\stackrel{*}{\rightharpoonup}\mu_{,x}$ as $L\to+\infty$. 
	It remains to prove \ref{(iii)discr}. We start by observing that for all $\delta>0$ and  all $\hat k\in I_k^L$ we have
 $$k^2\le \left(\hat k+\frac\pi L\right)^2\le (1+\delta)\hat k^2+ (1+\delta^{-1})\frac{\pi^2}{L^2}\,,$$
	so that
	\begin{equation}\label{eq:lim_first_term}
		\begin{split}
			\int_0^\lambda	\sum_{
				k\in \frac{\pi \mathbb{Z}}{L}} k^2\overline  b^L(x,k)\dx&
			=\int_{0}^\lambda \sum_{
				k\in \frac{\pi \mathbb{Z}}{L},k\ne0} \int_{I_k^L} k^2 \di\nu_{\frac{x}{\lambda}}(\hat k)\, 2x\dx\\
			&= \sum_{
				k\in \frac{\pi \mathbb{Z}}{L},k\ne0} \int_{(0,\lambda)\times I_k^L} k^2\di\mu_\lambda(x,\hat k)\\
			&\le \sum_{
				k\in \frac{\pi \mathbb{Z}}{L},k\ne0} \int_{(0,\lambda)\times I_k^L}  \bigg((1+\delta)\hat k^2  
			+(1+\delta^{-1})\frac{\pi^2}{L^2}\bigg)
			\di\mu_\lambda(x,\hat k)\\
			&=(1+\delta)	\int_{(0,\lambda)\times\R}\hat k^2\di\mu_\lambda
			+\mu_\lambda((0,\lambda)\times\R)(1+\delta^{-1})\frac{\pi^2}{L^2}\,.
		\end{split}
	\end{equation}
	Moreover there holds
	\begin{equation}\label{eq:lim_second_term}
		\begin{split}
			\int_0^\lambda	\sum_{
				k\in \frac{\pi \mathbb{Z}}{L},k\ne0}\frac{1}{4k^2}\frac{(\overline  b^L_{,x}(x,k))^2}{\overline  b^L(x,k)}\dx
			=\int_0^\lambda\sum_{
				k\in \frac{\pi \mathbb{Z}}{L},k\ne0}
			\frac{1}{4 k^2}\bigg(\frac{\overline b_{,x}^L(x,k)}{\overline  b^L(x,k)}\bigg)^2\overline  b^L(x,k)
			\dx\,.
		\end{split}
	\end{equation}
	Since $1/|k|\le 1/|\hat k|$ for $\hat k\in I_k^L$ the following inequality follows
	\begin{equation}\label{claim}
		\frac{1}{2 |k|}
		\frac{\overline b_{,x}^L(x,k)}{\overline  b^L(x,k)}=\frac{1}{2 |k|} \strokedint_{I_k^L}   \frac{\di\mu_{\lambda,x}}{\di\mu_\lambda}(x,\hat k)
		\di\nu_{\frac{x}{\lambda}}(\hat k)
		\le \strokedint_{I_k^L}  \frac{1}{2 |\hat k|} \frac{\di\mu_{\lambda,x}}{\di\mu_\lambda}(x,\hat k)
		\di\nu_{\frac{x}{\lambda}}(\hat k)\,.
	\end{equation}
	Now combining \eqref{eq:lim_second_term} with \eqref{claim} we obtain 
	\begin{equation}\label{eq:lim_second_term_1}
		\begin{split}
			\int_0^\lambda	\sum_{
				k\in \frac{\pi \mathbb{Z}}{L},k\ne0}\frac{1}{4k^2}\frac{(\overline  b^L_{,x}(x,k))^2}{\overline b^L(x,k)}\dx
			&=\int_0^\lambda\sum_{
				k\in \frac{\pi \mathbb{Z}}{L},k\ne0}
			\bigg(	\strokedint_{I_k^L}  \frac{1}{2 |\hat k|} \frac{\di\mu_{\lambda,x}}{\di\mu_\lambda}(x,\hat k)
			\di\nu_{\frac{x}{\lambda}}(\hat k)\bigg)^2
			\nu_{\frac{x}{\lambda}}(I_k^L)\, 2x\dx\\
			&\le 
			\int_0^\lambda\sum_{
				k\in \frac{\pi \mathbb{Z}}{L},k\ne0}
			\int_{I_k^L}  \frac{1}{4 \hat k^2}		\bigg( \frac{\di\mu_{\lambda,x}}{\di\mu_\lambda}(x,\hat k)\bigg)^2
			\di\nu_{\frac{x}{\lambda}}(\hat k)
			\, 2x\dx\\
			&=\int_{(0,\lambda)\times\R}\frac{1}{4 \hat k^2}		\bigg( \frac{\di\mu_{\lambda,x}}{\di\mu_\lambda}(x,\hat k)\bigg)^2\di\mu_\lambda\\
			&
			=\int_{(0,1)\times\R}\frac{1}{4 \hat k^2}		\bigg( \frac{\di\mu_{,x}}{\di\mu}(x,\hat k)\bigg)^2\di\mu
			\,,
		\end{split}
	\end{equation}
	where the inequality follows by Jensen's inequality. Finally gathering together \eqref{eq:lim_first_term} and \eqref{eq:lim_second_term_1} and recalling \eqref{eq:lim_F_lambda} we infer
	\begin{equation*}
		\limsup_{L\to\infty}	\F_\infty^\lambda( \mu^L)\le (1+\delta)
	\limsup_{L\to\infty}	 \F_\infty^\lambda\big(\mu_\lambda\big)
		\le (1+\delta) \F_\infty(\mu)\,.
	\end{equation*}
By arbitrariness of $\delta$, \ref{(iii)discr} follows by letting $\delta\to0$.\\
\end{proof}
\begin{lemma}[Construction of  $u$]\label{lem:moll}
		Let $\mu\in\mathcal{M}_\infty$ be such that $\F_\infty(\mu)<+\infty$. Let  $\eps=\eps(L)>0$  and $n=n(L)\in \mathbb{N}$ be such that 
		$$\lim_{L\to +\infty}\eps(L)=0\,,\quad
		\lim_{L\to +\infty}n(L)=\lim_{L\to +\infty}\frac L{n(L)}=+\infty\,.$$
		Then there exists $\hat u^L\in \mathcal{A}_L^{\rm out}\cap \mathcal{A}_{L_0}^{\rm out}$  with $L_0:=L/n(L)$ that satisfies the following properties: let $$A^L(x):=\frac12\fint_{-L}^L(\hat{u}^L_{,y}(x,\cdot))^2\dy \quad \text{ and }
	\quad	f^L(x):=\sqrt{\frac{x}{A^L(x)}}\,.$$
	Then	 for all $x\in (0,1)$ and $N\in\mathbb{N}$ there holds
		\begin{equation}\label{eq:AL}
			\max\{x,\eps\}\lesssim A^L(x) \lesssim \max\{x,\eps\}\,;
		\end{equation}
		\begin{equation}\label{eq:f(x)}(f^L(x))^2\lesssim \frac x{\max\{x,\eps\}}\,,\quad
	(	f^L(x))^2\le 1+o_L(1)
	\,;
		\end{equation}
		\begin{equation}\label{eq:f(x)-2}
			(	f^L(x))^2\ge1+o_N(1)\ \text{ if }x\in (N\eps,1)\,;
		\end{equation}
		\begin{equation}\label{eq:f'(x)-f''(x)}
			(\dot	f^L(x))^2\lesssim
			\frac{ \max \{e^{-\frac x\eps}, e^{-\frac{1}{\sqrt\eps}}\}}{x\eps}\,, \quad (	\ddot f^L(x))^2\lesssim
			\frac1{x^3\eps}\,;
		\end{equation}
	{	there exists a continuous increasing function $\omega\colon[0,+\infty)\to[0,+\infty)$ with $\omega(0)=0$ such that }
		\begin{equation}\label{eq:est-u}
		\fint_{-L}^L(\hat u^L(x,\cdot))^2\dy \lesssim
	{
		\begin{cases}
			\max	\{x,\eps\}	(\omega(2N\eps)+Ne^{-N})& \text{ if }x\in (0,N\eps)\\
			x&\text{ if }x\in (N\eps,1)
		\end{cases}\quad
	}\,;
	\end{equation}
	\begin{equation}\label{eq:est-u^4}
		\fint_{-L}^L(\hat u^L(x,\cdot))^4\dy \lesssim
		L_0^2(\max\{\eps,x\})^2\,;
	\end{equation}
	\begin{equation}\label{eq:uyy+ux}
		\fint_{-L}^{L}
		\left( (\hat u_{,yy}^L(x,\cdot))^2 + (\hat u_{,x}^L(x,\cdot))^2\right)
		\dy\lesssim\frac1\eps\,.
	\end{equation}
	Moreover 
				\begin{equation}\label{eq:weak-conv}
				(\mu^L(\hat u^L),\mu^L_{,x}(\hat u^L))\stackrel{*}{\rightharpoonup}(\mu,\mu_{,x})\quad \text{in }\mathcal M_b((-1,1)\times\R)^2\,;
			\end{equation}
			\begin{equation}\label{eq:limsup}
				\limsup_{L\to\infty}
				\fint_{-L}^L\int_{-1}^1\big((\hat u^L_{,x})^2+ (\hat u^L_{,yy})^2\big) \dx\dy\le \mathcal{F}_\infty(\mu)\,;
			\end{equation}
		\begin{equation}\label{eq:uxx-uxy-uxyy}
		\fint_{-L}^L\int_{-1}^1\left((\hat u^L_{,xy})^2+(\hat u^L_{,xx})^2+ 	(\hat u_{,yyx}^L)^2\right)\dx\dy
		\lesssim \frac 1{\eps^2}\,;
		\end{equation}
			\begin{equation}\label{eq:ux}
\fint_{-L}^L\int_{-1}^1(\hat u^L_{,x})^4\dx\dy\lesssim \frac{L_0^2}{\eps^2}\,.
			\end{equation}
Finally since $\hat u^L$ and all its derivatives are $2L_0$-periodic in the $y$-variable the above estimates still hold true if we replace the average integral on $[-L,L]$ with the average integral on $[-L_0,L_0]$.
\end{lemma}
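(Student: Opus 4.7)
My plan is to build $\hat u^L$ from its Fourier coefficients obtained by mollifying (in $x$) the discretised quadratic-variable coefficients produced by Lemma~\ref{lem:discretisation}. Since $\hat u^L$ must be $2L_0$-periodic and $L_0=L/n(L)$ with $n(L)\in\mathbb N$, I will work on the coarser frequency lattice $\frac{\pi\mathbb Z}{L_0}\subset\frac{\pi\mathbb Z}{L}$. Concretely, I apply Lemma~\ref{lem:discretisation} with $L_0$ in the role of $L$ and a dilation parameter $\lambda=\lambda(L)\in(1,2)$ satisfying $\lambda\searrow 1$ and $\lambda-1\gg \eps$, obtaining nonnegative $\overline b^{L_0}(\cdot,k)\in W^{1,1}(0,\lambda)$ indexed by $k\in\frac{\pi\mathbb Z}{L_0}$, with the exact constraint $\sum_k\overline b^{L_0}(x,k)=2x$ on $(0,\lambda)$ and discretised energy asymptotically at most $\mathcal F_\infty(\mu)$.

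\textbf{Construction.} Extend each $\overline b^{L_0}(\cdot,k)$ by zero to $\mathbb R$ and set $\tilde b(x,k):=(\overline b^{L_0}(\cdot,k)*\rho_\eps)(x)$, where $\rho_\eps$ is a smooth, nonnegative, even mollifier with Gaussian-type tails (so that $|\dot\rho_\eps(x)|\lesssim \eps^{-2}e^{-|x|/\eps}$ and similarly for $\ddot\rho_\eps$). Define $a_k(x):=\sqrt{\tilde b(x,k)}/|k|$ for $k\neq 0$ and $a_0\equiv 0$, and set
\begin{equation*}
\hat u^L(x,y):=\sqrt 2\sum_{k\in\frac{\pi\mathbb Z}{L_0},\,k>0}a_k(x)\sin(ky)+\sqrt 2\sum_{k\in\frac{\pi\mathbb Z}{L_0},\,k<0}a_k(x)\cos(ky).
\end{equation*}
Since the frequencies lie in $\frac{\pi\mathbb Z}{L_0}\subset\frac{\pi\mathbb Z}{L}$, $\hat u^L$ is both $2L_0$- and $2L$-periodic, i.e. $\hat u^L\in\mathcal A_L^{\rm out}\cap\mathcal A_{L_0}^{\rm out}$; smoothness of $\tilde b$ and the bending bound from Lemma~\ref{lem:discretisation}~\ref{(iii)discr} place it in $W^{2,2}$.

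\textbf{Pointwise and $L^p$ estimates.} The starting identity is
\begin{equation*}
2A^L(x)=\sum_{k\in\frac{\pi\mathbb Z}{L_0}}\tilde b(x,k)=\rho_\eps*\bigl(2\,\cdot\,\chi_{(0,\lambda)}\bigr)(x),
\end{equation*}
which, by direct inspection of the convolution, equals $2x$ on $(\eps,\lambda-\eps)$ and is $O(\eps)$ near $0$; this yields \eqref{eq:AL} and, after dividing by $x$, the bounds \eqref{eq:f(x)}--\eqref{eq:f(x)-2}. The derivative bounds \eqref{eq:f'(x)-f''(x)} use the Gaussian-type tails of $\dot\rho_\eps$ and $\ddot\rho_\eps$ and the fact that the function being mollified is affine in the bulk. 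The $L^2$ derivative bounds \eqref{eq:uyy+ux}, \eqref{eq:uxx-uxy-uxyy} and the $\limsup$ bound \eqref{eq:limsup} follow via Plancherel \eqref{plancherel-derivatives}--\eqref{derivatives-a} from the explicit Fourier expansion, combined with the energy bound from Lemma~\ref{lem:discretisation}~\ref{(iii)discr} and the Jensen-type inequality
\begin{equation*}
\frac{(\dot{\tilde b}(x,k))^2}{\tilde b(x,k)}\le\Bigl(\frac{(\dot{\overline b}^{L_0}(\cdot,k))^2}{\overline b^{L_0}(\cdot,k)}*\rho_\eps\Bigr)(x),
\end{equation*}
obtained from convexity of $(p,q)\mapsto q^2/p$. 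The $L^4$-bounds \eqref{eq:est-u^4}, \eqref{eq:ux} follow from $\|\hat u^L(x,\cdot)\|_{L^\infty_y}^2\lesssim(\#\text{active }k)\sum_k a_k^2$ together with $\#\{k\in\frac{\pi\mathbb Z}{L_0}:a_k\ne 0\}\lesssim L_0\,k_{\max}$.

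\textbf{Convergence and the main obstacle.} The weak-$*$ convergence \eqref{eq:weak-conv} is immediate, because the measure associated to $\hat u^L$ is the $\rho_\eps$-convolution (in $x$) of $\mu^{L_0}$, while $\mu^{L_0}\stackrel{*}{\rightharpoonup}\mu$ by Lemma~\ref{lem:discretisation}~\ref{(ii)discr} and $\eps\to 0$. The delicate point I expect to require most care is the refined bound \eqref{eq:est-u} near the transition $\{x=0\}$: for $x>N\eps$ the constraint and Plancherel directly yield $\fint(\hat u^L)^2\dy\lesssim x$, but for $x<N\eps$ the same argument only gives $\lesssim \eps$ rather than a quantity vanishing faster than $\eps$ times a modulus. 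The resolution is to exploit the disintegration $(\nu_{x/\lambda},2x\dx)$ of $\mu_\lambda$ from Corollary~\ref{cor:disint=pushfwd}: splitting $\sum_k k^{-2}\tilde b(x,k)$ according to a frequency threshold, the high-$k$ contribution is controlled via $\int k^2\di\mu<\infty$ and Markov's inequality, while the low-$k$ contribution is controlled by the mass of $\nu_{x/\lambda}$ on bounded sets, which vanishes in an averaged sense as $x\to 0^+$. This two-regime splitting produces the continuous increasing modulus $\omega$ with $\omega(0)=0$, and is the main technical obstacle in the proof.
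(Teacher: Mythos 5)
Your overall architecture matches the paper's (dilate $\mu$ by $\lambda\searrow1$, discretise on the lattice $\frac{\pi\mathbb Z}{L_0}$ via Lemma~\ref{lem:discretisation}, mollify the quadratic variables at scale $\eps$, take square roots as Fourier coefficients), but the step you yourself flag as the crux, the bound \eqref{eq:est-u}, is not proved by the mechanism you propose. Splitting $\sum_k k^{-2}\tilde b(x,k)$ at a frequency threshold and using Markov for the high modes leaves a low-frequency part carrying the weight $k^{-2}$, which is \emph{unbounded} on any bounded set of frequencies because the lattice reaches down to $|k|=\pi/L_0\to0$; "the mass of $\nu_{x/\lambda}$ on bounded sets vanishes in an averaged sense as $x\to0^+$" neither controls this weighted sum nor is itself justified in your sketch, and an averaged statement would in any case not give the pointwise-in-$x$ bound with the explicit factor $\max\{x,\eps\}\,(\omega(2N\eps)+Ne^{-N})$. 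The paper's mechanism is different and is what actually works: since $\overline b^{L_0}(0^+,k)=0$, the fundamental theorem of calculus plus Cauchy--Schwarz gives $\overline b^{L_0}(z,k)=\bigl(\int_0^z \overline b^{L_0}_{,x}/(2\sqrt{\overline b^{L_0}})\bigr)^2\le z\int_0^z (\overline b^{L_0}_{,x})^2/(4\overline b^{L_0})$, and summing with the weight $k^{-2}$ identifies $\sum_k k^{-2}\overline b^{L_0}(z,k)\le z\,\omega(z)$ with $\omega(z)$ the truncated energy, which is increasing, vanishes at $0$, and is uniformly controlled by $\mathcal F_\infty(\mu)$; the two-regime estimate in $x$ then comes from the tails of the mollifier, not from a frequency split.

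There are three further gaps. First, extending $\overline b^{L_0}(\cdot,k)$ by zero to the right of $(0,\lambda)$ creates a jump (the coefficients need not vanish at $\lambda$), so its distributional derivative has a singular part and your Jensen-type inequality $(\dot{\tilde b})^2/\tilde b\le \bigl((\dot{\overline b}^{L_0})^2/\overline b^{L_0}\bigr)*\rho_\eps$ is not valid as written; the paper avoids this by truncating at a mean-value point $\bar\lambda\in(\tfrac{\lambda+1}2,\lambda)$ chosen via \eqref{ub:1} and extending by the \emph{constant} value $\overline b^{L_0}(\bar\lambda,k)$, which keeps the extension in $W^{1,1}_{\rm loc}$ and controls the extra bending contribution. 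Second, your route to the $L^4$ bounds \eqref{eq:est-u^4} and \eqref{eq:ux} via $\#\{k\colon a_k\neq0\}\lesssim L_0 k_{\max}$ is unfounded: there is no frequency cutoff in the construction, and infinitely many modes may be active; the correct argument (used in the paper) is that $\hat u^L(x,\cdot)$ has zero mean over a period, so FTC plus Cauchy--Schwarz gives $\|\hat u^L(x,\cdot)\|_\infty\lesssim L_0\sqrt{A^L(x)}$ and analogously $\|\hat u^L_{,x}(x,\cdot)\|_\infty\lesssim L_0/\eps$. Third, the estimates \eqref{eq:uxx-uxy-uxyy} and \eqref{eq:ux} rest on the pointwise ratio bounds $|a^L_{,x}|\le a^L/\eps$ and $|a^L_{,xx}|\le |a^L_{,x}|/\eps$, which follow from the exact identity $|\dot\rho_\eps|=\rho_\eps/\eps$ for the two-sided exponential kernel $\rho_\eps=\tfrac1{2\eps}e^{-|x|/\eps}$; for a Gaussian-type mollifier $|\dot\rho_\eps|/\rho_\eps$ is unbounded and these pointwise bounds fail, so your choice of kernel (and your proof of those two displays, which you do not detail) would need to be revised. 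Relatedly, the precise form $e^{-1/\sqrt\eps}$ in \eqref{eq:f'(x)-f''(x)} is tied to the quantitative choice $\lambda-1\sim\sqrt\eps$, which your condition $\lambda-1\gg\eps$ does not guarantee.
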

\begin{proof}
		Let $\mu\in\mathcal{M}_\infty$, $\eps=\eps(L)$, $n=n(L)$ and $L_0$ be as in the statement.
	We set
	\begin{equation*}
		\lambda=\lambda(L):=(1+\sqrt{\eps})
		\searrow1\quad\text{as }L\to+\infty\,,
	\end{equation*}
hence in particular $\lambda\searrow1$ as $L_0\to+\infty$.
	%
	We  construct  $\hat u^L\in \mathcal A_{L_0}^{\rm out}$ and then we extend it periodically in  $[-1,1]\times[-L,L]$, without relabelling it.  In this way, since $L=n(L)L_0$ with $n(L)\in \mathbb{N}$, we have $\hat u^L\in \mathcal A_{L}^{\rm out}$.
The main idea is that of discretizing the measure $\mu$ in the variable $k$ to get a measure concentrated on lines $\R\times\{k\}$ with $k\in \frac{\pi\mathbb{Z}}{L_0}$ where the weight on each line is a coefficient $b^{L_0}(x,k)$. Afterwards we  define $\hat u^L$ as  in such a way that its Fourier coefficients $a^L_k(x)$ are as close as possible to ${\sqrt{b^{L_0}(x,k)}}/k$  but at the same time have better regularity to ensure $\hat u^L\in \mathcal{A}_L^{\rm out}$.  In order to do that we first dilate $\mu$ with a factor $\lambda$ in the $x$-variable as in Lemma \ref{lem:extension}, then we discretize using Lemma \ref{lem:discretisation}, and finally we mollify $b^{L_0}(\cdot,k)$ at scale $\eps$ after a suitable extension in $\R$. \\

\noindent
To this purpose we let $(\mu^{L_0})\subset \mathcal{M}_\infty^\lambda$ be the sequence of Lemma \ref{lem:discretisation} for the parameter $L_0$, which is of the form
	$$	\mu^{L_0}=\sum_{
		k\in \frac{\pi \mathbb{Z}}{L_0}} \overline b^{L_0}(x,k)  \L^1\res{(0,\lambda)}\times\delta_k\,.$$ 
By the mean value theorem, for each $\lambda$, we can find $\bar\lambda\in(\frac{\lambda+1}2,\lambda)$ such that
	\begin{equation}\label{ub:1}
		\sum_{
			k\in \frac{\pi \mathbb{Z}}{L_0}} \overline b^{L_0}(\bar{\lambda},k)k^2\le \fint_{\frac{\lambda+1}{2}}^\lambda \sum_{
			k\in \frac{\pi \mathbb{Z}}{L_0}} \overline b^{L_0}(x,k)k^2\dx\,.
	\end{equation}
	By truncating $\bar b^{L_0}$ at $x=\bar\lambda$ we can define $ b^{L_0}\colon\R\times\frac{\pi\mathbb{Z}}{L_0}\to \R$ as 
	\begin{equation}\label{def:b}
		b^{L_0}(k,x):=\begin{cases}0 &\text{if }x\le0,\\
			\bar b^{L_0}(x,k)&\text{if }0<x<\bar\lambda,\\
			\bar b^{L_0}(\bar\lambda,k)&\text{if }x\ge\bar\lambda\,.
		\end{cases}
	\end{equation}
Let $\rho_\eps(x):=\frac{1}{2\eps}e^{\frac{-|x|}{\eps}}$ and note that in particular
	\begin{equation}\label{eq:prop-moll}
		|\dot\rho_\eps(x)|= \frac1\eps\rho_\eps(x)\,.
	\end{equation}
	Finally we let   $ a^{L}\colon\R\times\frac{\pi\mathbb{Z}}{L_0}\to \R$ be defined as
	\begin{equation*}
		a^{L}(x,k):= b^{L_0}(\cdot,k)*\rho_\eps(x)\,,
	\end{equation*}
	and   $\hat u^L\in\mathcal{A}_{L_0}^{\rm out}$  be the  function 
	\begin{equation*}
	\hat	u^{L}(x,y):= \sum_ {k\in \frac{\pi \mathbb{Z}}{L_0}, k>0 } \frac{\sqrt{ a^{L}(x,k)}}{k}\sqrt2\sin(ky) + \sum_ {k\in \frac{\pi \mathbb{Z}}{L_0}, k < 0 }  \frac{\sqrt{ a^{L}(x,k)}}{k}\sqrt2\cos(ky)\,.
	\end{equation*}
Eventually we extend $\hat u^L$, without relabelling it, periodically in $[-1,1]\times [-L,L]$. \\

\noindent
\textit{Step 1:} in this step we show \eqref{eq:AL}--\eqref{eq:f'(x)-f''(x)}. By \eqref{plancherel-derivatives} and  \eqref{eq:constraint} we have that 
\begin{equation}\label{est:A}
	\begin{split}
		2A^L(x)&=\fint_{-L}^L(\hat{u}_{,y})^2\dy= 
	\fint_{-L_0}^{L_0}(\hat{u}_{,y})^2\dy
\\&	= 
	\sum_{k\in \frac{\pi \mathbb{Z}}{L_0}}{a}^{L}(x,k)
= \Big(\sum_{k\in \frac{\pi \mathbb{Z}}{L_0}}{b}^{L_0}(\cdot,k)\Big)*\rho_\eps(x)\\
&= 2\big(x\chi_{(0,\bar\lambda)}+\bar\lambda\chi_{(\bar\lambda,+\infty)}\big)*\rho_\eps(x)=2x\chi_{\{x\ge0\}}+\eps\Big(e^{\frac{-|x|}{\eps}}  - e^{\frac{x-\bar\lambda}{\eps}} \Big )\,,
	\end{split}
\end{equation}
for all $x\in [-1,1]$. 
Then for $\eps$ small enough we have 
$$\frac\eps{2e}\le 2 A^L(x)\le {3\eps}\quad \text{ if }x\in (0,\eps)\,,$$
$$ x \le 2x+ \eps\big(e^{\frac{-1}{\eps}}  - e^{\frac{1-\bar\lambda}{\eps}} \big )
\le2 A^L(x)\le {3x}\quad\text{ if } x\in(\eps,1)\,,$$
so that 
\begin{equation*}
 \frac1{2e}\max\{x,\eps\}\le	2A^L(x)=\fint_{-L}^L(\hat{u}_{,y})^2\dy\le 3\max\{x,\eps\}\,.
\end{equation*}
We have that
\begin{equation*}
	(f^L(x))^2=\frac{x}{A^L(x)}\,,
\end{equation*}
and by estimates above
$$(f^L(x))^2\lesssim\frac{ x}{\max\{x,\eps\}}\lesssim\frac x\eps\quad \text{ if }x\in(0,\eps)\,.$$
Observing that $(e^{\frac{-x}{\eps}}-e^{\frac{x-\bar\lambda}{\eps}})\ge 0$ if and only if $x\in(0,\bar\lambda/2)$ we have 
$$(f^L(x))^2=  \frac{x}{x+\frac\eps2 (e^{\frac{-x}{\eps}}-e^{\frac{x-\bar\lambda}{\eps}}) } \le 1 \quad \text{ in }(0,\bar\lambda/2)\,,$$ 
and 
$$ 	(f^L(x))^2\le \frac{x}{x+\frac\eps2 (e^{\frac{-1}{\eps}}-e^{\frac{1-\bar\lambda}{\eps}})} \le 1+ \frac{\frac\eps2 |e^{\frac{-1}{\eps}}-e^{\frac{1-\bar\lambda}{\eps}}|
 }{\frac12+ \frac\eps2 (e^{\frac{-1}{\eps}}-e^{\frac{1-\bar\lambda}{\eps}})}=1+\frac{\eps}{8}\quad \text{ for }x\in(\bar\lambda/2,1)\,.
$$ 
Moreover we have that 
$$ (f^L(x))^2
 \ge  \frac{x}{x+\frac\eps2 e^{{-N}} }= 1-  \frac{\frac\eps2 e^{{-N}} }{x+\frac\eps2 e^{{-N}} } \ge 1- 
\frac{\frac\eps2 e^{{-N}} }{N\eps }= 1+o_N(1)\quad\text{ for }x\in[N\eps,1)\,.
$$ 
A direct computation shows that 
$$(\dot f^L(x))^2=\frac{\left(A^L(x)-x\dot A^L(x)\right)^2}{4x(A^L(x))^3}
\lesssim 
 \frac{(\max\{x,\eps\})^2 \max\{e^{-\frac x\eps}, e^{\frac{1-\bar{\lambda}}{\eps}}\} }{x(\max\{x,\eps\})^3} \lesssim \frac{\max\{e^{-\frac x\eps}, e^{\frac{1}{\sqrt\eps}}\} }{x\max\{x,\eps\}}\lesssim\frac{\max\{e^{-\frac x\eps}, e^{\frac{1}{\sqrt\eps}}\} }{x\eps}\,,$$
where the second inequality follows from $\frac{1-\bar\lambda}{\eps}\le \frac{1-\frac{\lambda+1}{2}}{\eps}= \frac1{2\sqrt{\eps}}$.
Furthermore we have
\begin{equation}
\begin{split}
(\ddot f^L(x))^2&\lesssim \frac{x(\ddot A^L(x))^2}{(A^L(x))^3}+ \frac1{x^3A^L(x)}+ \frac{(\dot A^L(x))^2}{x(A^L(x))^3}+ \frac{x(\dot A^L(x))^4}{(A^L(x))^5}\\
&\lesssim \frac{x\eps^{-2}e^{-\frac x\eps}}{(\max\{x,\eps\})^3}+
\frac1{x^3\max\{x,\eps\}}+ \frac1{x(\max\{x,\eps\})^3}+ \frac{x}{(\max\{x,\eps\})^5}\,.
\end{split}
\end{equation}
Hence 
\begin{equation*}
(\ddot f^L(x))^2\lesssim \frac{1}{\eps^4}+ \frac1{x^3\eps} + \frac{1}{x\eps^3}\lesssim \frac1{x^3\eps}\quad \text{ if }x\in(0,\eps)\,,
\end{equation*}
and 
\begin{equation*}
(\ddot f^L(x))^2\lesssim \frac{e^{-\frac x\eps}}{x^2\eps^2}+ \frac1{x^4}\lesssim \frac1{x^3\eps}\quad \text{ if }x\in(\eps,1)\,.
\end{equation*}

 \noindent
\textit{Step 2:} in this step we show \eqref{eq:est-u}. By \eqref{eq:plancherel-u} it holds
\begin{equation}\label{eq:u^2-1}
	\begin{split}
		\fint_{-L}^{L}(\hat u^L(x,\cdot))^2\dy=
		\fint_{-L_0}^{L_0}(\hat u^L(x,\cdot))^2\dy&=\sum_ {k\in \frac{\pi \mathbb{Z}}{L_0},k\ne 0 }  \frac{{ a^{L}(x,k)}}{k^2} \\&= 
		\bigg(\sum_ {k\in \frac{\pi \mathbb{Z}}{L_0},k\ne 0 }  \frac{{ b^{L_0}(\cdot,k)}}{k^2} \bigg) *\rho_\eps(x) \\
		&
		=\int_0^{+\infty} \bigg(\sum_ {k\in \frac{\pi \mathbb{Z}}{L_0},k\ne 0 }  \frac{{ b^{L_0}(z,k)}}{k^2} \bigg)\rho_\eps(x-z)\dz
		\,.
	\end{split}
\end{equation}
Moreover by the fundamental theorem of calculus and Hölder's inequality we have 
\begin{equation}\label{eq:u^2-2}
	b^{L_0}(z,k)=
	\left(\sqrt{	b^{L_0}(z,k) }\right)^2=
	\left(\int_0^z\frac{b^{L_0}_{,x}(\hat z,k)}{2\sqrt{b^{L_0}(\hat z,k)}}\,{\rm d}\hat z\right)^2
	\le z  \int_0^z \frac{(b^{L_0}_{,x}(\hat z,k))^2}{4{b^{L_0}(\hat z,k)}}\,{\rm d}\hat z
	\,.
\end{equation}
Combining \eqref{eq:u^2-1} with \eqref{eq:u^2-2} we find
\begin{equation*}
	\begin{split}
		\fint_{-L}^{L}(\hat u^L(x,\cdot))^2\dy&\le  \int_0^{+\infty}
		\bigg(
		\sum_ {k\in \frac{\pi \mathbb{Z}}{L_0},k\ne 0 } 
		\int_0^z \frac{(b^{L_0}_{,x}(\hat z,k))^2}{4k^2{b^{L_0}(\hat z,k)}}\,{\rm d}\hat z
		\bigg)z\rho_\eps(x-z)
		\dz
	\end{split}
\end{equation*}
By definition of $b^{L_0}$ it follows that
\begin{equation}\label{eq:u^2-3}
	\begin{split}
		\sum_ {k\in \frac{\pi \mathbb{Z}}{L_0},k\ne 0 } 	\int_0^z \frac{(b^{L_0}_{,x}(\hat z,k))^2}{4k^2{b^{L_0}(\hat z,k)}}\,{\rm d}\hat z &= 	\sum_ {k\in \frac{\pi \mathbb{Z}}{L_0},k\ne 0 } 
		\int_0^{z\wedge\bar\lambda} \frac{(\bar b^{L_0}_{,x}(\hat z,k))^2}{4k^2{\bar b^{L_0}(\hat z,k)}}\,{\rm d}\hat z\\
		&
		\le 
		\int_{(0,z\wedge\bar\lambda)\times\R}\frac{1}{4  k^2}		\bigg( \frac{\di\mu^{L_0}_{,x}}{\di\mu^{L_0}}\bigg)^2\di\mu^{L_0}=:\omega(z)
		\,,
	\end{split}
\end{equation}
where the last inequality can be obtained by arguing exactly as in \eqref{eq:lim_second_term_1}.  
Therefore we deduce that 
\begin{equation*}
	\fint_{-L_0}^{L_0}(\hat u^L(x,\cdot))^2\dy
	\le 
	\int_0^{+\infty}\omega(z) z\rho_\eps(x-z)\dz\,.
\end{equation*}
Note that  $\omega(z)\to 0$ as $z\to0$ and $\omega(z)\le \omega(\bar{\lambda})\le (\lambda^2)\mathcal{F}_\infty(\mu)\le C$.
Let $N\ge 2$ be a natural  number.   Assume $x\in (0,N\eps]$. Since $\omega$ is increasing we have
\begin{equation}
	\begin{split}
		\int_0^{+\infty}\omega(z) z\rho_\eps(x-z)\dz&\le	\omega(2N\eps)\int_0^{2N\eps} z\rho_\eps(x-z)\dz+ \omega(\bar\lambda)	\int_{2N\eps}^{+\infty} z\rho_\eps(x-z)\dz\\
		&\le \omega(2N\eps)\max\{x,\eps\}+CN e^{-N}\eps\\
		&\lesssim\max\{x,\eps\}(\omega(2N\eps)+Ne^{-N})
		\,.
	\end{split}
\end{equation}
If instead $x\in (N\eps,1)$, we get 
\begin{equation}
	\begin{split}
		\int_0^{+\infty}\omega(z) z\rho_\eps(x-z)\dz&\le	
		\omega(\bar\lambda)	\int_{0}^{+\infty} z\rho_\eps(x-z)\dz
		\lesssim x.
	\end{split}
\end{equation}

\noindent
\textit{Step 3:} in this step we show \eqref{eq:est-u^4}.  By the mean value theorem and the fact that $u^L(x,\cdot)$ is $2L_0$-periodic, for fixed $x$, we can find $y_0=y_0(x)\in[-L_0,L_0]$ such that
\begin{equation*}
	\hat	u^L(x,y_0)= \fint_{-L_0}^{L_0}\hat u^L(x,\hat y)\di\hat y=0\,,
\end{equation*}
where the second equality follows by the definition of $\hat u^L$ and the fact that 
\begin{equation*}\begin{split}
&	 \fint_{-L_0}^{L_0}\sin (k\hat y)\di\hat y=\frac k{2L_0}(-\cos(kL_0)+\cos(kL_0))=0\,,\\
& \fint_{-L_0}^{L_0}\cos (k\hat y)\di\hat y=\frac k{2L_0}(\sin(kL_0)-\sin(kL_0))=0\,,
	\end{split}
\end{equation*}
for all $k\in \frac{\pi\Z}{L_0}$.
Thus by the fundamental theorem of calculus, Hölder's inequality and Plancherel it holds
\begin{equation*}\label{eq:abs-u}
	\begin{split}
		|\hat u^L(x,y)|=\left|\int_{y_0}^y\hat u_{,y}(x,y')\dy'\right| &\le \sqrt{L_0}\left(\int_{-L_0}^{L_0}(u^L_{,y})^2\dy'\right)^{\frac12}\\
		&=L_0\sqrt{2A^L(x)}\,.
	\end{split}
\end{equation*}
This  together with steps 1 and 2 yield
\begin{equation*}	\strokedint_{-L}^{L} \int_{-1}^1
	{(\hat u^L)^4} \dx\dy=
	\strokedint_{-L_0}^{L_0} \int_{-1}^1
	{(\hat u^L)^4} \dx\dy\lesssim L_0^2A^L(x) \strokedint_{-L_0}^{L_0} \int_{-1}^1
	{(\hat u^L)^2} \dx\dy\lesssim L_0^2(\max\{x,\eps\})^2\,.
\end{equation*}

\noindent
\textit{Step 4:} in this step we show \eqref{eq:uyy+ux}. By \eqref{plancherel-derivatives} the definition of $a^L$ and \eqref{ub:1} 
\begin{equation}\label{est:uyy}
	\begin{split}
			\fint_{-L}^{L}(u_{,yy}^L)^2\dy&=
		\fint_{-L_0}^{L_0}(u_{,yy}^L)^2\dy= \sum_{k\in\frac{\pi\mathbb{Z}}{L_0}}k^2a^L(x,k)= \sum_{k\in\frac{\pi\mathbb{Z}}{L_0}}k^2b^{L_0}(x,\cdot)*\rho_\eps(x)\\
		& = \sum_ {k\in \frac{\pi \mathbb{Z}}{L_0}}\biggl(k^2\int_0^{\bar \lambda}\bar b^{L_0}(z,k)\rho_\eps(x-z)\dz\biggr)+ \sum_ {k\in \frac{\pi \mathbb{Z}}{L_0}}\bar b^{L_0}(\bar\lambda,k)k^2\int_{\bar\lambda}^{+\infty}\rho_\eps(x-z)\dz\\
		& \le \|\rho_\eps\|_\infty
		\int_{0}^{\bar{\lambda}} \sum_{k\in\frac{\pi\mathbb{Z}}{L_0}}\overline b^{L_0}(x,k)k^2\dx
		+e^{\frac{x-\bar\lambda}{\eps}}
		\fint_{\frac{\lambda+1}{2}}^\lambda \sum_{
			k\in \frac{\pi \mathbb{Z}}{L_0}} \overline b^{L_0}(x,k)k^2\dx\\
		&\lesssim \left(\frac1\eps+ \frac{1}{\lambda-1}e^{\frac{x-\bar\lambda}{\eps}}\right)\int_{0}^{{\lambda}} \sum_{k\in\frac{\pi\mathbb{Z}}{L_0}}\overline b^{L_0}(x,k)k^2\dx\\
		&
		\lesssim
		\left(\frac1\eps+ \frac{1}{\lambda-1}e^{\frac{x-\bar\lambda}{\eps}}\right)\F_\infty^\lambda(\mu^{L_0})\lesssim \frac{1}\eps \,.
	\end{split}
\end{equation}
Since the function $(z_1,z_2)\mapsto{z_1^2}/{z_2}$ is convex by Jensen's inequality we have 
\begin{equation*}
\frac{(a^L_{,x}(x,k))^2}{a^L(x,k)}= \frac{\big(b^{L_0}_{,x}(\cdot,k)*\rho_\eps(x)\big)^2}{b^{L_0}(\cdot,k)*\rho_\eps(x)}
\le 
\frac{(b^{L_0}_{,x}(\cdot,k))^2}{b^{L_0}(\cdot,k)}*\rho_\eps(x)\,.
\end{equation*}
This together with \eqref{plancherel-derivatives} and \eqref{derivatives-a} imply
\begin{equation}\label{est:ux}
\begin{split}
		\fint_{-L}^{L}(u_{,x}^L)^2\dy=
	\fint_{-L_0}^{L_0}(u_{,x}^L)^2\dy&= \sum_{k\in\frac{\pi\mathbb{Z}}{L_0},k\ne0 }\frac{1}{4k^2}\frac{(a^L_{,x}(x,k))^2}{a^L(x,k)}
	 \le 
	 \sum_{k\in\frac{\pi\mathbb{Z}}{L_0},k\ne0 }
	 \frac{1}{4k^2}\frac{(b^{L_0}_{,x}(\cdot,k))^2}{b^{L_0}(\cdot,k)}*\rho_\eps(x)\\
	 &= 
	 	 \sum_{k\in\frac{\pi\mathbb{Z}}{L_0},k\ne0 }
	\bigg( \frac{1}{4k^2}\int_0^{\bar\lambda}
	 \frac{(b^{L_0}_{,x}(z,k))^2}{b^{L_0}(z,k)}\rho_\eps(x-z)\dz\bigg)\\
	 &\le \|\rho_\eps\|_\infty \int_0^{\bar\lambda}	 \sum_{k\in\frac{\pi\mathbb{Z}}{L_0},k\ne0 }
	 \frac{1}{4k^2}
	 \frac{(b^{L_0}_{,x}(x,k))^2}{b^{L_0}(x,k)} \dx\le \frac1\eps \F_\infty^\lambda(\mu^{L_0})\lesssim \frac{1}\eps \,.
\end{split}
\end{equation}
Thus combining \eqref{est:uyy} with \eqref{est:ux} we infer \eqref{eq:uyy+ux}.\\

\noindent
\textit{Step 5:} in this step we show \eqref{eq:weak-conv}. By recalling Definition \ref{def:muL} we have 
\begin{equation*}
\hat\mu^L:=	\mu^L(\hat u^L)= \sum_ {k\in \frac{\pi \mathbb{Z}}{L_0} } a^L(x,k) \mathcal{L}^1\res(-1,1)\times\delta_k\,,
\end{equation*}
and 
\begin{equation*}
\hat\mu^L_{,x}=	\mu^L_{,x}(\hat u^L)= \sum_ {k\in \frac{\pi \mathbb{Z}}{L_0} }a^L_{,x}(x,k) \mathcal{L}^1\res(-1,1)\times\delta_k\,,
\end{equation*}
Let $\varphi \in C^\infty_c((-1,1)\times\R)$. Then 
\begin{equation*}
	\begin{split}
		\int_{(-1,1)\times\R}\varphi\di\hat \mu^L&= 
		\int_{(-1,1)\times\R}\varphi\di\hat\mu^L- 	\int_{(0,1)\times\R}\varphi\di\mu^{L_0}
		+ \int_{(0,1)\times\R}\varphi\di\mu^{L_0}\,.
	\end{split}
\end{equation*}
By Lemma \ref{lem:discretisation} we have that 
\begin{equation*}
	\lim_{L\to\infty}\int_{(0,1)\times\R}\varphi\di\mu^{L_0}=\int_{(0,1)\times\R}\varphi\di\mu\,,
\end{equation*}
hence it suffices to show that 
\begin{equation*}
\lim_{L\to +\infty}	\biggl(\int_{(-1,1)\times\R}\varphi\di\hat\mu^L- 	\int_{(0,1)\times\R}\varphi\di\mu^{L_0} \biggr)=0\,.
\end{equation*}
Indeed by \eqref{est:A} and \eqref{eq:constraint} we have 
\begin{equation*}
	\begin{split}
		\biggl|\int_{(-1,1)\times\R}\varphi\di\hat\mu^L- &	\int_{(0,1)\times\R}\varphi\di\mu^{L_0} \biggr|\le
\|\varphi\|_\infty \left|
	\int_{(-1,1)\times\R}\di\hat\mu^L- 	\int_{(0,1)\times\R}\di\mu^{L_0}\right|\\&=\|\varphi\|_\infty
	\Bigg| \int_{-1}^1 \sum_ {k\in \frac{\pi \mathbb{Z}}{L_0} } a^L(x,k)\dx- \int_0^1 \sum_ {k\in \frac{\pi \mathbb{Z}}{L_0} }\bar b^L(x,k)\dx\Bigg|\\
	&=\|\varphi\|_\infty   \left|\int_{-1}^1\eps \Big(e^{-\frac{|x|}{\eps}}-e^{\frac{x-\bar\lambda}{\eps}}\Big)\dx\right|\le C\eps^2\to0\quad \text{as }L\to+\infty\,.
	\end{split}
\end{equation*}
Moreover we have 
\begin{equation*}
		\int_{(-1,1)\times\R}\varphi\di\hat \mu^L_{,x}= -	\int_{(-1,1)\times\R}\varphi_{,x}\di\hat \mu^L\to 
	-	\int_{(-1,1)\times\R}\varphi_{,x}\di \mu= 
			\int_{(-1,1)\times\R}\varphi\di \mu_{,x}\,.
\end{equation*}

\noindent
\textit{Step 6:} in this step we show \eqref{eq:limsup}.
By  \eqref{plancherel-derivatives}, \eqref{def:b} we have 
\begin{equation}\label{ub:est-lt1}
	\begin{split}
		&	\strokedint_{-L_0}^{L_0} \int_{-1}^1 (\hat u^L_{,yy})^2\dx \dy= \int_{-1}^1\sum_ {k\in \frac{\pi \mathbb{Z}}{L_0}}a^L(x,k)k^2 \dx=
		\int_{-1}^1\sum_ {k\in \frac{\pi \mathbb{Z}}{L_0}}b^{L_0}(\cdot,k)*\rho_\eps(x)k^2 \dx\\
		& =\int_{-1}^1\sum_ {k\in \frac{\pi \mathbb{Z}}{L_0}}\biggl(k^2\int_0^{\bar \lambda}\bar b^{L_0}(z,k)\rho_\eps(x-z)\dz\biggr)\dx+ \sum_ {k\in \frac{\pi \mathbb{Z}}{L_0}}\bar b^{L_0}(\bar\lambda,k)k^2\int_{-1}^1\int_{\bar\lambda}^{+\infty}\rho_\eps(x-z)\dz\,.
	\end{split}
\end{equation}
Fubini's theorem yields
\begin{equation}\label{ub:est-lt2}
	\int_{-1}^1\sum_ {k\in \frac{\pi \mathbb{Z}}{L_0}}\biggl(k^2\int_0^{\bar \lambda}\bar b^{L_0}(z,k)\rho_\eps(x-z)\dz\biggr)\dx\le \int_0^{\bar{\lambda}}\sum_ {k\in \frac{\pi \mathbb{Z}}{L_0}} \bar b^{L_0}(x,k)k^2\dx\,.
\end{equation}
while from \eqref{ub:1} we deduce 
\begin{equation}\label{ub:est-lt3}
	\sum_ {k\in \frac{\pi \mathbb{Z}}{L_0}}\bar b^{L_0}(\bar\lambda,k)k^2\int_{-1}^1\int_{\bar\lambda}^{+\infty}\rho_\eps(x-z)\dz\dx\le
	\frac{\eps}{2}\left(e^{\frac{1-\bar\lambda}{\eps}}- e^{\frac{-1-\bar\lambda}{\eps}}
	\right)
	\fint_{\frac{\lambda+1}{2}}^\lambda \sum_{
		k\in \frac{\pi \mathbb{Z}}{L_0}} \overline b^{L_0}(x,k)k^2\dx \,.
\end{equation}
Analogously from \eqref{plancherel-derivatives}, \eqref{derivatives-a} and Jensen's inequality it holds
\begin{equation}\label{ub:est-lt4}
	\begin{split}
		\strokedint_{-L_0}^{L_0} \int_{-1}^1  (\hat u^L_{,x})^2\dx \dy&= \int_{-1}^1\sum_ {k\in \frac{\pi \mathbb{Z}}{L_0},k\ne0}\frac{1}{4k^2}
		\frac{(a^L_{,x}(x,k))^2}{ a^L(x,k)} \dx\\
		&\le 
		\int_{-1}^1\sum_ {k\in \frac{\pi \mathbb{Z}}{L_0},k\ne0}\frac{1}{4k^2}
		\frac{(b^{L_0}_{,x}(\cdot,k))^2}{ b^{L_0}(\cdot,k)} *\rho_\eps(x)\dx\\
		&\le \int_{0}^{\bar\lambda}
		\sum_ {k\in \frac{\pi \mathbb{Z}}{L_0},k\ne0}\frac{1}{4k^2}
		\frac{(\bar b^{L_0}_{,x}(x,k))^2}{\bar b^{L_0}(x,k)} \dx\,.
	\end{split}
\end{equation}
Gathering together \eqref{ub:est-lt1}--\eqref{ub:est-lt4} we obtain 
\begin{equation}\label{eq:step1}
	\fint_{-L}^L\int_{-1}^1\big((\hat u^L_{,x})^2+ (\hat u^L_{,yy})^2\big) \dx\dy\le \biggl(1+ \frac{C\eps(e^{\frac{1-\bar\lambda}{\eps}-e^\frac{-1-\bar\lambda}{\eps}})}{1-\lambda}\biggr)\mathcal F^\lambda_\infty (\mu^{L_0}) \,,
\end{equation}
and hence by letting $L\to+\infty$ and recalling Lemma \ref{lem:discretisation} \ref{(iii)discr} we deduce \eqref{eq:limsup}.\\

\noindent
 \textit{Step 7:} in this step we show \eqref{eq:uxx-uxy-uxyy}. By \eqref{plancherel-derivatives}, \eqref{derivatives-a}
 \begin{equation}\label{eq:uxy}
 		\fint_{-L}^{L}\int_{-1}^1(\hat u^L_{,xy})^2\dx\dy= 
 	\fint_{-L_0}^{L_0}\int_{-1}^1(\hat u^L_{,xy})^2\dx\dy= 
\int_{-1}^1 	\sum_ {k\in \frac{\pi \mathbb{Z}}{L_0},k\ne 0 } \frac14 \frac{(a^L_{,x}(x,k))^2}{a^L(x,k)}\dx\,.
 \end{equation}
 Next we observe that \eqref{eq:prop-moll} yields
 \begin{equation}\label{eq:ax}
 	{(a^L_{,x}(x,k))^2}={(b^L(\cdot,k)*\dot\rho_\eps(x))^2}
 	\le	 {(b^L(\cdot,k)*|\dot\rho_\eps|(x))^2} = \frac1{\eps^2}{(b^L(\cdot,k)*\rho_\eps(x))^2}
 	\le \frac1{\eps^2}(a^L(x,k))^2\,,
 \end{equation}
 so that combining \eqref{eq:uxy} with \eqref{eq:ax} and recalling \eqref{est:A} we obtain
 \begin{equation}\label{eq:uxy-bis}
	\fint_{-L}^{L}\int_{-1}^1(\hat u^L_{,xy})^2\dx\dy\le\frac1{4\eps^2} \int_{-1}^1\sum_ {k\in \frac{\pi \mathbb{Z}}{L_0} } 	(a^L(x,k))^2\dx\lesssim \frac{1}{\eps^2}\,.
 \end{equation}
In a similar way \eqref{plancherel-derivatives} and \eqref{derivatives-a} give 
 \begin{equation}\label{eq:uxx}
 	\begin{split}
 		 		\fint_{-L}^{L}
 		\int_{-1}^1(\hat u_{,xx}^L)^2\dx\dy&=
 		\fint_{-L_0}^{L_0}
 	\int_{-1}^1(\hat u_{,xx}^L)^2\dx\dy
 		=
 		\int_{-1}^1\sum_{k\in\frac{\pi\mathbb{Z}}{L_0},k\ne0}\frac1{k^2}\Big[\Big(\sqrt{a^L(x,k)}\Big)_{,xx}\Big]^2\dx
 		\\
 		&\lesssim \int_{-1}^{1}\sum_{k\in\frac{\pi\mathbb{Z}}{L_0},k\ne0}\frac1{k^2}\frac{(a^L_{,xx}(x,k))^2}{a^L(x,k)}\dx+
 		\int_{-1}^{1}\sum_{k\in\frac{\pi\mathbb{Z}}{L_0},k\ne0}\frac1{k^2}\frac{(a^L_{,x}(x,k))^4}{(a^L(x,k))^3}\dx\\
 		&\lesssim \frac{1}{\eps^2}\int_{-1}^{1}\sum_{k\in\frac{\pi\mathbb{Z}}{L_0},k\ne0}\frac1{4k^2}\frac{(a^L_{,x}(x,k))^2}{a^L(x,k)}\dx\\
 		&\lesssim \frac{1}{\eps^2} \int_{-1}^1\fint_{-L_0}^{L_0}(\hat u^L_{,x})^2\dy\dx
 		\lesssim\frac{1}{\eps^2}\mathcal F^\lambda_\infty (\mu^{L_0})	\lesssim\frac{1}{\eps^2}
 		\,,
 	\end{split}
 \end{equation}
where the second inequality follows from 
\begin{equation*}
	{(a^L_{,x}(x,k))^4}= (a^L_{,x}(x,k))^2{(b^L(\cdot,k)*\dot\rho_\eps(x))^2}
	\le \frac1{\eps^2} (a^L_{,x}(x,k))^2 (a^L(x,k))^2\,,
\end{equation*}
and 
\begin{equation*}
	\begin{split}
		{(a^L_{,xx}(x,k))^2}= {(b^L_{,x}(\cdot,k)*\dot\rho_\eps(x))^2}\le 
		\frac1{\eps^2}(a^L_{,x}(x,k))^2\,.
	\end{split}
\end{equation*}
Moreover appealing again to \eqref{eq:ax} we find
\begin{equation}\label{eq:uyyx}
	\begin{split}\fint_{-L}^{L}	\int_{-1}^1(\hat u_{,yyx}^L)^2\dx\dy&=
\fint_{-L_0}^{L_0}	\int_{-1}^1(\hat u_{,yyx}^L)^2\dx\dy
=
\int_{-1}^1\sum_{k\in\frac{\pi\mathbb{Z}}{L_0},k\ne0}{k^2}\Big[\Big(\sqrt{a^L(x,k)}\Big)_{,x}\Big]^2\dx\\
&= \int_{-1}^1\sum_{k\in\frac{\pi\mathbb{Z}}{L_0},k\ne0 }\frac{k^2}4\frac{(a^L_{,x}(x,k))^2}{a^L(x,k)}\dx \lesssim\frac1{\eps^2}\int_{-1}^1\sum_{k\in\frac{\pi\mathbb{Z}}{L_0},k\ne0 }{k^2}{a^L(x,k)}\dx\\
&
\lesssim \frac1{\eps^2}\int_{-1}^1\fint_{-L_0}^{L_0}(\hat u^L_{,yy})^2\dy\dx
\lesssim\frac{1}{\eps^2}\mathcal F^\lambda_\infty (\mu^{L_0})	\lesssim\frac{1}{\eps^2}\,.
	\end{split}
\end{equation}
Eventually gathering together \eqref{eq:uxy-bis}--\eqref{eq:uyyx} we deduce \eqref{eq:uxx-uxy-uxyy}.\\

\noindent 
\textit{Step 8:} in this step we show \eqref{eq:ux}.  Analogously to step 3 we can find $y_0\in[-L_0,L_0]$ such that
\begin{equation*}
\hat	u^L_{,x}(x,y_0)= \fint_{-L_0}^{L_0}\hat u^L_{,x}(x,\hat y)\di\hat y=0\,,
\end{equation*}
so that by the fundamental theorem of calculus, Hölder's inequality it holds
\begin{equation}\label{eq:abs-u_x}
	\begin{split}
		|\hat u_{,x}^L(x,y)|=\left|\int_{y_0}^y\hat u_{,xy}(x,y')\dy'\right| &\le \sqrt{L_0}\left(\int_{-L_0}^{L_0}(u^L_{,xy})^2\dy'\right)^{\frac12}\\
		&=\sqrt 2L_0\Biggl(	\sum_ {k\in \frac{\pi \mathbb{Z}}{L},k\ne0}\frac{1}{4}
		\frac{ (a^L_{,x}(x,k))^2}{a^L(x,k)}\Biggr)^{\frac12}\\
		& \lesssim L_0\Bigg(\frac1{4\eps^2} \sum_ {k\in \frac{\pi \mathbb{Z}}{L},k\ne0} a^L(x,k)\Bigg)^{\frac12}\lesssim \frac{L_0}{\eps}
		\,,
	\end{split}
\end{equation}
where the last two inequalities follow from \eqref{eq:ax} and \eqref{est:A}. 
Therefore \eqref{eq:abs-u_x} gives
\begin{equation*}\label{ub:est4}
	\begin{split}
		\strokedint_{-L_0}^{L_0} \int_{-1}^1
		{(\hat u^L_{,x})^4} \dx\dy&\lesssim  \frac{ L_0^2}{\eps^2 }
		\strokedint_{-L_0}^{L_0} \int_{-1}^1
		{(\hat u^L_{,x})^2} \dx\dy
		\lesssim \frac{ L_0^2}{\eps^2}\,,
	\end{split}
\end{equation*}
and the proof is concluded.

\end{proof}
We are now in a position to prove Proposition \ref{prop:upb}.
\begin{proof}[Proof of Proposition \ref{prop:upb}]
Let $\mu\in\mathcal{M}_\infty$ be as in the statement.  Let $\eps=\eps(L)>0$ and $n=n(L)\in\mathbb{N}$ be such that
\begin{equation}\label{cond_parameters1}
	\lim_{L\to +\infty}\eps(L)=0\,,\quad
	\lim_{L\to +\infty}n(L)=\lim_{L\to +\infty}\frac L{n(L)}=+\infty\,,
\end{equation}
to be chosen later. Let
 $\hat u^L\in \mathcal{A}_L^{\rm out}\cap \mathcal{A}_{L_0}^{\rm out}$  with $L_0:=L/n(L)$ be the function given by Lemma \ref{lem:moll}. Recall that
 $$	A^L(x)=\frac12\fint_{-L}^L(\hat{u}^L_{,y})^2\dy\ \text{ and }\
 f^L(x)=\sqrt{\frac{x}{A^L(x)}}\ \text{ for }\ x\ge0\,.$$
  Furthermore we let $ M=M(L)\in\mathbb{N}$, $M\ge 2$   to be chosen later  such that   setting $\delta=\delta(L):=\frac{\eps(L)}{M(L)}<\eps(L)$ we have
  \begin{equation}\label{cond_parameters2}
\lim_{L\to +\infty}M(L)=+\infty\,,\quad \text{and}\quad \lim_{L\to+\infty}\delta(L)=\lim_{L\to +\infty}M^2(L)\delta(L)=0\,.
  \end{equation}
 %
We consider $\psi_\delta\in C^\infty(\R)$ such that 
\begin{equation*}
\psi_\delta\equiv0\ \text{ in } (-\infty,\delta]\,, \quad \psi_\delta\equiv1\ \text{ in } [2\delta,+\infty)\,,\quad |\dot\psi_\delta(x)|\le C\delta^{-1}\,,\quad |\ddot\psi_\delta(x)|\le C\delta^{-2}\,.
\end{equation*}
Note that $\dot\psi_\delta=\ddot\psi_\delta=0$ in $(\delta,2\delta)^c$.
We next define  $(w^L,u^L)= \big((w^{L}_1,w^{L}_2),u^L\big) $ as follows:
\begin{equation*}
	\begin{split}
	&	u^L(x,y):= \psi_\delta(x)f^L(x)\hat u^L(x,y) \,,\\
	& w_2^{L}(x,y):=\psi_\delta^2(x)xy+ B^{L}(x)- \frac12\int_0^y(u^{L}_{,y})^2 \dy' 	\,,\\
	&w^{L}_1(x,y):=x-\frac{1}{L^2}\int_0^y(w^L_{2,x}+u^L_{,x}u^L_{,y})\dy'\,,
	\end{split}
\end{equation*}
where 
\begin{equation*}
B^L(x):=\frac12 \fint_{-L_0}^{L_0}\int_0^y(u^L_{,y})^2\dy'\dy-\fint_{-L_0}^{L_0}\int_0^x u^L_{,x}u^L_{,y}\dx'\dy\,.
\end{equation*}
Clearly $u^L\in \mathcal{A}_L^{\rm out}\cap \mathcal{A}_{L_0}^{\rm out}$.  We show that  $w^L\in \mathcal{A}_{L}^{\rm in}\cap\mathcal{A}_{L_0}^{\rm in}$. Precisely to see that $w^L(x,\cdot)$ is $2L_0$ periodic we use the following fact: 

\smallskip
\textit{A differentiable function $h$ is $T$-periodic if $h'$ is $T$-periodic and $h(t)=h(t+T)$ for some $t$. }

\smallskip

\noindent
The function $w^L_{2,y}(x,\cdot)$ is $2L_0$-periodic, since $u^L_{,y}$ is, and from \eqref{plancherel-derivatives} satisfies
\begin{equation*}
	\begin{split}
		w_2^L(x,L_0)-w_2^L(x,-L_0)&=2L_0\psi_\delta^2(x)x-\int_{-L_0}^{L_0}( u^L_{,y})^2\dy \\
	&=
	2L_0\psi_\delta^2(x)(x-(f^L(x))^2A^L(x))
	=
	0\,,
	\end{split}
\end{equation*}
from which we deduce $w^L_2(x,\cdot)$ is $2L_0$-periodic. Using this periodicity, and in particular also of $w^L_{2,x}$, we see that $w^L_{1,y}(x,\cdot)$ is $2L_0$-periodic. Moreover  we have
\begin{equation*}
	\begin{split}
	w_1^L(x,L_0)&-w_1^L(x,-L_0)=-\frac{1}{L^2}\int_{-L_0}^{L_0}(w^L_{2,x}+u^L_{,x}u^L_{,y})\dy\\
	&=
	-\frac{1}{L^2}\left(2L_0\dot B^L(x)-\int_{-L_0}^{L_0}\int_0^yu_{,y}^Lu^L_{,xy}\dy'\dy
	+ \int_{-L_0}^{L_0}u_{,x}^Lu_{,y}^L\dy
	\right)=0\,,
	\end{split}
\end{equation*}
where the second and the third equalities follow from
\begin{equation*}
w^L_{2,x}= (\psi_\delta^2(x)x)'y+\dot B^L(x)-\int_0^yu^L_{,y}u^L_{,xy}\dy'\,,
\end{equation*}
and
\begin{equation*}
2L_0\dot B^L(x)=2L_0\left(\fint_{-L_0}^{L_0}u^L_{,y}u^L_{,xy}\dy- \fint_{-L_0}^{L_0}u^L_{,x}u^L_{,y}\dy\right)\,.
\end{equation*}
Thus we deduce that $w^L_{1,y}$ is $2L_0$-periodic.  For the reader convenience we divide the rest of the proof into several  steps. 
We will repeatedly use that the averaged integral over $(-L,L)$ of a $2L_0$-periodic function is equal to  the averaged integral over $(-L_0,L_0)$ of the same function.\\

\noindent
\textit{Step 1:} we show that $(w^L,u^L)$ converges to $\mu$ in the sense of Definition \ref{def:convergence}. 
We have that 
	\begin{equation*}
	\hat	u^{L}(x,y)= \sum_ {k\in \frac{\pi \mathbb{Z}}{L_0}, k>0 }  a^L_k(x) \sin(ky) + \sum_ {k\in \frac{\pi \mathbb{Z}}{L_0}, k < 0 }  a^L_k(x)\cos(ky)\,,
\end{equation*}
so that 
\begin{equation*}
	u^L(x,y)= \sum_ {k\in \frac{\pi \mathbb{Z}}{L_0}, k>0 }\psi_\delta(x)f^L(x)  a^L_k(x) \sin(ky) + \sum_ {k\in \frac{\pi \mathbb{Z}}{L_0}, k < 0 } \psi_\delta(x)f^L(x) a^L_k(x)\cos(ky)\,.
\end{equation*}
Therefore we get
\begin{equation*}
	\begin{split}
		\mu^L:=	\mu^L(u^L)&=\sum_ {k\in \frac{\pi \mathbb{Z}}{L_0} }\psi^2_\delta(x)(f^L(x) )^2 (a^L_k(x))^2k^2 \mathcal{L}^1\res(-1,1)\times  \delta_k\\
		&=\sum_ {k\in \frac{\pi \mathbb{Z}}{L_0} }\psi_\delta^2(x)(f^L(x))^2  (a^L_k(x))^2k^2  \mathcal{L}^1\res(\delta,1)\times \delta_k \,.
	\end{split}
\end{equation*}
We show $\mu^L\stackrel{*}{\rightharpoonup}\mu$. We fix $\varphi\in C_c^\infty((-1,1)\times\R)$ and we write
\begin{equation*}
	\int_{(-1,1)\times\R}\varphi\di\mu^L= 	\left(\int_{(-1,1)\times\R}\varphi\di\mu^L- 	\int_{(-1,1)\times\R}\varphi\di\mu^L(\hat u^L)\right)+\int_{(-1,1)\times\R}\varphi\di\mu^L(\hat u^L)\,.
\end{equation*}
By Lemma \ref{lem:moll} it holds 
\begin{equation*}
	\lim_{L\to+\infty}\int_{(-1,1)\times\R}\varphi\di\mu^L(\hat u^L)=\int_{(-1,1)\times\R}\varphi\di\mu\,.
\end{equation*}
Therefore it is sufficient to show that 
\begin{equation*}
	\lim_{L\to+\infty}\left(\int_{(-1,1)\times\R}\varphi\di\mu^L- 	\int_{(-1,1)\times\R}\varphi\di\mu^L(\hat u^L)\right)=0\,.
\end{equation*}
Recalling that $\psi_\delta=0$ in $(-1,\delta)$, $\psi_\delta=1$ in $(2\delta,1)$ and $M\eps=M\delta^2\ge 2\delta$, we have 
\begin{equation*}
	\begin{split}
	 \bigg|
		\int_{(-1,1)\times\R}\varphi\di\mu^L- 	\int_{(0,1)\times\R}\varphi\di\mu^{L}(\hat u^L)\bigg|
	&	=
		\bigg| \int_{0}^1 \sum_ {k\in \frac{\pi \mathbb{Z}}{L_0} }\varphi(x,k) (a^L_k(x))^2k^2 \Big(\psi^2_\delta(x)(f^L(x))^2-1\Big)\dx
		\bigg|\\
		&
		\le \|\varphi\|_{\infty}
		 \bigg|  \int_0^{M\eps} \Big(\psi^2_\delta(x)(f^L(x))^2-1\Big)\sum_ {k\in \frac{\pi \mathbb{Z}}{L_0} } (a^L_k(x))^2k^2 \dx \bigg|\\
		 &+ 
		  \|\varphi\|_{\infty}
		 \bigg|  \int_{M\eps}^1\Big((f^L(x))^2-1\Big) \sum_ {k\in \frac{\pi \mathbb{Z}}{L_0} } (a^L_k(x))^2k^2\dx \bigg|\,.
	\end{split}
\end{equation*}
Since $$\sum_ {k\in \frac{\pi \mathbb{Z}}{L_0} } (a^L_k(x))^2k^2= A^L(x)=\frac12\fint_{-L}^L(\hat{u}_{,y}(x,\cdot))^2\dy\,,$$ by \eqref{eq:AL} and \eqref{eq:f(x)} we have
\begin{equation*}
 \Big(\psi^2_\delta(x)(f^L(x))^2-1\Big)\sum_ {k\in \frac{\pi \mathbb{Z}}{L_0} }
  (a^L_k(x))^2k^2\le\Big(\psi^2_\delta(x)(1+o_L(1))-1\Big) \max\{x,\eps\}
\end{equation*}
which together with \eqref{cond_parameters2} imply
\begin{equation*}
\|\varphi\|_{\infty}
\bigg|  \int_0^{M\eps} \sum_ {k\in \frac{\pi \mathbb{Z}}{L_0} } (a^L_k(x))^2k^2(\psi^2_\delta(x)(f^L(x))^2-1)\dx \bigg|\le C M\eps =CM^2\delta\to0\quad \text{ as } L\to+\infty\,.
\end{equation*}
Whereas \eqref{eq:f(x)}, \eqref{eq:f(x)-2} with $N=M$ and the fact that $M=M(L)\to+\infty$ imply 
$$ (f^L(x))^2=1+o_L(1)\quad \text{ in }\quad[M\eps,1)\,,$$
so that 
\begin{equation*}
  \|\varphi\|_{\infty}
\bigg|  \int_{M\eps}^1 \sum_ {k\in \frac{\pi \mathbb{Z}}{L_0} } a^L_k(x)(f^L(x)-1)\dx \bigg|\le o_L(1)\to0\quad \text{ as } L\to+\infty\,.
\end{equation*}
Eventually by duality we have 
\begin{equation*}
	\int_{\R\times(-1,1)}\varphi\di \mu^L_{,x}= -	\int_{\R\times(-1,1)}\varphi_{,x}\di \mu^L\to 
	-\int_{\R\times(-1,1)}\varphi_{,x}\di \mu= 
	\int_{\R\times(-1,1)}\varphi\di\mu_{,x}\,,
\end{equation*}
which in turn implies  $\mu^L_{,x}\stackrel{*}{\rightharpoonup}\mu_{,x}$.
\medskip

\noindent
\textit{Step 2:} we show that 
\begin{equation}\label{ub:step2}
	\limsup_{L\to+\infty}
	\fint_{-L}^L\int_{-1}^1((u^L_{,x})^2  + (u^L_{,yy})^2  )\dx\dy
	\le \mathcal{F}_\infty(\mu)+C\lim_{L\to +\infty} \omega(2M^2\delta)\log M\,.
\end{equation}
To this purpose we note that 
\begin{equation}\label{der:ux}
	u^L_{,x}(x,y)= \psi_\delta(x)f^L(x)\hat u^L_{,x}(x,y)+ \dot\psi_\delta(x)f^L(x)\hat u^L(x,y)+
	\psi_\delta(x)\dot f^L(x)\hat u^L(x,y)\,.
\end{equation}
Therefore by Young's inequality
\begin{equation}\label{ub:step2-0}
\begin{split}
		\fint_{-L}^L\int_{-1}^1(u^L_{,x})^2  \dx\dy
	&	\le (1+\alpha) 
			\fint_{-L}^L\int_{\delta}^1(f^L(x))^2(\hat u^L_{,x})^2  \dx\dy\\
			&+ 2(1+\alpha^{-1}) \frac1{\delta^2}	\fint_{-L}^L\int_{\delta}^{2\delta}(f^L(x))^2(\hat u^L)^2  \dx\dy\\
				&+ 2(1+\alpha^{-1})	\fint_{-L}^L\int_{\delta}^{1}(\dot f^L(x))^2(\hat u^L)^2  \dx\dy\,,
\end{split}
\end{equation}
for any $\alpha>0$. In this way by recalling \eqref{eq:f(x)}  \eqref{eq:est-u} and \eqref{eq:f'(x)-f''(x)}  we have 
\begin{equation}\label{ub:step2-1}
	\fint_{-L}^L\int_{-1}^1(f^L(x))^2(\hat u^L_{,x})^2  \dx\dy\le 
	 (1+o_L(1))	\fint_{-L}^L\int_{-1}^1(\hat u^L_{,x})^2  \dx\dy\,,
\end{equation}
\begin{equation}\label{ub:step2-2}
	\frac1{\delta^2}	\fint_{-L}^L\int_{\delta}^{2\delta}(f^L(x))^2(\hat u^L)^2  \dx\dy\lesssim \frac1{\delta^2}{\delta}(\omega(2M\eps)+Me^{-M})\int_\delta^{2\delta}\dx\lesssim (\omega(2M\eps)+Me^{-M})\,,
\end{equation}
and
\begin{equation}\label{ub:step2-3}
	\begin{split}
\fint_{-L}^L\int_{\delta}^{1}(\dot f^L(x))^2(\hat u^L)^2  \dx\dy&\lesssim  
\int_\delta^\eps \frac1{x\eps}\eps (\omega(2M\eps)+Me^{-M})\dx \\
&+ \int_{\eps}^{\sqrt{\eps}} \frac{e^{-\frac x\eps}}{x\eps}x(\omega(2M\eps)+Me^{-M})\dx
+ \int_{\sqrt\eps}^{1}\frac{e^{-\frac1{\sqrt\eps}}}{x\eps}x\dx\\
& \lesssim( \log\frac{\eps}{\delta}+1)(\omega(2M\eps)+Me^{-M})+ \frac{e^{-\frac1{\sqrt\eps}}}{\eps}\,.
	\end{split}
\end{equation}
Gathering together \eqref{ub:step2-0}--\eqref{ub:step2-3} and recalling that $\delta=\eps/M$ we deduce 
\begin{equation}\label{lead-1}
	\fint_{-L}^L\int_{-1}^1(u^L_{,x})^2  \dx\dy\le (1+\bar \alpha) 	\fint_{-L}^L\int_{-1}^1(\hat u^L_{,x})^2  \dx\dy+ 
	C(\log M+1)(\omega(2M^2\delta)+Me^{-M})+o_L(1)\,,
\end{equation}
with $\bar{\alpha}:=\alpha+\alpha o_L(1)+o_L(1)$.
Moreover as $u^L_{,yy}=\psi_\delta(x)f^L(x)\hat u^L_{,yy}$ we get
\begin{equation}\label{lead-2}
	\fint_{-L}^L\int_{-1}^1(u^L_{,yy})^2  \dx\dy\le 	\fint_{-L}^L\int_{-1}^1(f^L(x))^2(\hat u^L_{,x})^2  \dx\dy
	\le 	\fint_{-L}^L\int_{-1}^1(\hat u^L_{,x})^2  \dx\dy\,.
\end{equation}
By \eqref{lead-1}, \eqref{lead-2}, \eqref{eq:limsup} and the fact that $M^2\delta\to0$ we finally deduce
\begin{equation*}
	\limsup_{L\to+\infty}
\fint_{-L}^L\int_{-1}^1((u^L_{,x})^2  + (u^L_{,yy})^2  )\dx\dy\le (1+\alpha)\mathcal{F}_\infty(\mu)+ C\lim_{L\to +\infty}\omega(2M^2\delta)\log M \,.
\end{equation*}
Eventually by the arbitrariness of $\alpha$ we infer the desired estimate.
\medskip

\noindent
\textit{Step 3:} we show that
\begin{equation}\label{ub:step3}
L^2\strokedint_{-L}^{L} \int_{-1}^1
\biggl( w^L_{1,x}+\frac{(u^L_{,x})^2}{2L^2}-1 \biggr)^2 \dx\dy\lesssim 
\frac {L_0^4}{L^2} \frac1{\delta^2\eps}\lesssim \frac {L_0^4}{L^2}\frac{1}{\delta^3M}\,.
\end{equation}
By Young's inequality we have
\begin{equation}\label{ub:est1}
	\begin{split}
L^2 \strokedint_{-L}^{L}
 \int_{-1}^1
\biggl( w^L_{1,x}+\frac{(u^L_{,x})^2}{2L^2}-1 \biggr)^2 \dx\dy
\lesssim  \fint_{-L}^{L} \int_{-1}^1
\frac{(u^L_{,x})^4}{L^2} \dx\dy+L^2
\strokedint_{-L}^{L}\int_{-1}^1
\bigl( w^L_{1,x}-1 \bigr)^2 \dx\dy\,.
	\end{split}
\end{equation}
We estimate the first term on the right hand-side of \eqref{ub:est1}. 
By \eqref{der:ux} it follows
\begin{equation}\label{eq:est-term1}
	\begin{split}
	\fint_{-L}^{L} \int_{-1}^1&
	{(u^L_{,x})^4} \dx\dy \lesssim 	\fint_{-L}^{L} \int_{-1}^1(f^L(x))^4
	{(\hat u^L_{,x})^4} \dx\dy\\&+ \frac{1}{\delta^4}
	\fint_{-L}^{L} \int_{\delta}^{2\delta}(f^L(x))^4
	{(\hat u^L)^4} \dx\dy
	+ 	\fint_{-L}^{L} \int_{\delta}^1(\dot f^L(x))^4	{(\hat u^L)^4} \dx\dy\,.
	\end{split}
\end{equation}
By \eqref{eq:f(x)} and \eqref{eq:ux} we have 
\begin{equation}
	\fint_{-L}^{L} \int_{-1}^1(f^L(x))^4
{(\hat u^L_{,x})^4} \dx\dy\lesssim\frac{L_0^2}{\eps^2}\,,
\end{equation}
whereas from \eqref{eq:f(x)}, \eqref{eq:est-u^4}, and the fact that $x\in (\delta,2\delta)$ we get
\begin{equation}
	\begin{split}
\frac{1}{\delta^4}
\fint_{-L}^{L} \int_{\delta}^{2\delta}(f^L(x))^4
{(\hat u^L)^4} \dx\dy& \lesssim \frac{1}{\delta^4} \frac{\delta^2}{\eps^2} L_0^2\int_{\delta}^{2\delta}(\max\{x,\eps\})^2
\dx
\lesssim \frac{L_0^2}{\delta}\,.
	\end{split}
\end{equation}
Finally by  \eqref{eq:f'(x)-f''(x)}   and \eqref{eq:est-u^4} 
\begin{equation}\label{eq:est-term1-3}
\begin{split}
	\fint_{-L}^{L} \int_{\delta}^1(\dot f^L(x))^4	{(\hat u^L)^4} &\dx\dy\lesssim
	L_0^2 \int_{\delta}^1 \frac{1}{x^2\eps^2}(\max\{x,\eps\})^2
	\dx\lesssim L_0^2\left(\frac1\delta+\frac1{\eps^2}\right)
	\,.
\end{split}
\end{equation}
Thus gathering together \eqref{eq:est-term1}--\eqref{eq:est-term1-3} we infer
\begin{equation}\label{eq:est-ux^4}
\fint_{-L}^{L} \int_{-1}^1
\frac{(u^L_{,x})^4}{L^2} \dx\dy\lesssim \frac{L_0^2}{L^2}\left(\frac1\delta+\frac1{\eps^2}\right)\,.
\end{equation}
 We now pass to estimate the second term on the right hand side of \eqref{ub:est1}. To this aim we observe that integrating by parts it holds
\begin{equation}\label{ub:est5}
	\begin{split}
	w_{2,x}^L+u^L_{,x}u^L_{,y}&=
	(x\psi_\delta^2(x))'y
	+\dot B^L(x)-\int_0^yu^L_{,y}u^L_{,xy}\dy'+u^L_{,x}u^L_{,y}(x,y)\\
	&= 	(x\psi_\delta^2(x))'y
	+\dot B^L(x)+ \int_0^yu_{,yy}u_{,x}\dy' +u^L_{,x}u^L_{,y}|_{y=0}\\
	&=  	(x\psi_\delta^2(x))'y+ \int_0^yu_{,yy}u_{,x}\dy'+C^L(x)\,,
	\end{split}
\end{equation}
where 
$$ C^L(x):=- \fint_{-L}^{L} \int_0^yu_{,yy}u_{,x}\dy'\dy=- \fint_{-L_0}^{L_0} \int_0^yu_{,yy}u_{,x}\dy'\dy\,,$$
and  the last equality is a consequence of the following identity
\begin{equation*}
\dot B^L(x)+  u^L_{,x}u^L_{,y}|_{y=0}=
\fint_{-L}^{L}\biggl(
\int_0^yu^L_{,y}u^L_{,xy}\dy'-
u^L_{,x}u^L_{,y}+  u^L_{,x}u^L_{,y}|_{y=0}\biggr) \dy=C^L(x)\,.
\end{equation*}
Using \eqref{ub:est5} and the definition of $\omega$ we get
\begin{equation*}
\begin{split}
w^L_{1,x}-1& = -\frac{1}{L^2}\int_0^y\left( 	(x\psi_\delta^2(x))''y' + \int_0^{y'}(u^L_{,yy}u^L_{,xx}+ u^L_{,yyx}u^L_{,x})\dy''+\dot C^L(x)
\right)\dy'\\
& =
 -\frac{1}{L^2}\left( (x\psi_\delta^2(x))''\frac{y^2}{2} + \int_0^y\int_0^{y'}(u^L_{,yy}u^L_{,xx}+ u^L_{,yyx}u^L_{,x})\dy''\dy'+\dot C^L(x)y
\right)
\,.
\end{split}
\end{equation*}
This together with Young's inequality give 
\begin{equation}\label{ub:est6}
	\begin{split}
	L^2	\strokedint_{-L}^{L}\int_{-1}^1
		\bigl( w^L_{1,x}-1 \bigr)^2 \dx\dy&= L^2	\strokedint_{-L_0}^{L_0}\int_{-1}^1
		\bigl( w^L_{1,x}-1 \bigr)^2 \dx\dy\\ &
		\lesssim \frac 1{L^2}
			\strokedint_{-L_0}^{L_0}\int_{-1}^1	((x\psi_\delta^2(x))'')^2y^4\dx\dy\\
			& 
		+	\frac 1{L^2}
			\strokedint_{-L_0}^{L_0}\int_{-1}^1\biggl(\int_0^y \int_0^{y'} (u^L_{,yy}u^L_{,xx}+ u^L_{,yyx}u^L_{,x})\dy''
			\dy'\biggr)^2\dx\dy \\
			&+ 
			\frac 1{L^2}
			\strokedint_{-L_0}^{L_0}\int_{-1}^1(\dot C^L(x))^2y^2\dx\dy\,.
	\end{split}
\end{equation}
As  $(x\psi_\delta^2(x))''= 4\psi_\delta(x)\dot\psi_\delta(x)+2x\psi_\delta(x)\ddot\psi_\delta(x)+2x(\dot\psi_\delta(x))^2$ in $(\delta,2\delta)$ and $(x\psi_\delta^2(x))''=0$ otherwise, we have
\begin{equation}\label{ub:est7}
	\begin{split}
\frac 1 {L^2}	\strokedint_{-L_0}^{L_0}\int_{-1}^1((x\psi_\delta^2(x))'')^2
y^4\dx\dy \lesssim 
\frac{L_0^4}{L^2}	\int_{\delta}^{2\delta}((x\psi_\delta^2(x))'')^2
\dx 
\lesssim \frac{L_0^4}{L^2} \frac1{\delta}
 \,.
	\end{split}
\end{equation}
We now estimate the second term on the right hand-side of \eqref{ub:est6}. 
We first observe that if $a,b,c,d$ are $2L_0$-periodic then by applying in order   H\"older, Young and Jensen inequalities we have
\begin{equation*}
	\begin{split}
		\bigg[
		\int_0^y\int_0^{y'}&(ab+cd)\dy''\dy' 
		\bigg]^2
		\le \left[
		\int_0^y  \|a\|_{L^2(0,y')}\|b\|_{L^2(0,y')}
		+ \|c\|_{L^2(0,y')}\|d\|_{L^2(0,y')}
		\dy'
		\right]^2\\
		& \lesssim 
		\left[
		\int_0^y  \|a\|_{L^2(0,y')}\|b\|_{L^2(0,y')}
		\dy'
		\right]^2
		+ \left[
		\int_0^y 
		 \|c\|_{L^2(0,y')}\|d\|_{L^2(0,y')}
		\dy'
		\right]^2\\
		& \lesssim L_0^2 \fint_{-L_0}^{L_0} \|a\|^2_{L^2(0,y')}\|b\|^2_{L^2(0,y')}
		\dy'+ 
		L_0^2 \fint_{-L_0}^{L_0} \|c\|^2_{L^2(0,y')}\|d\|^2_{L^2(0,y')}
		\dy'\\
	&\lesssim
	L_0^4\left[
	\bigg(\fint_{-L_0}^{L_0}a^2\dy''\bigg)
	\bigg(\fint_{-L_0}^{L_0}b^2\dy''\bigg)+
	\bigg(\fint_{-L_0}^{L_0}c^2\dy''\bigg)
	\bigg(\fint_{-L_0}^{L_0}d^2\dy''\bigg)
	\right]
		\,.	\end{split}
\end{equation*}
Therefore it follows that 
\begin{equation}\label{ub:est0}
	\begin{split}
		\frac 1{L^2}
		\strokedint_{-L_0}^{L_0}\int_{-1}^1\biggl(\int_0^y &\int_0^{y'} (u^L_{,yy}u^L_{,xx}+ u^L_{,yyx}u^L_{,x})\dy''
		\dy'\biggr)^2\dx\dy \\
		&=	\frac 1{L^2}
		\strokedint_{-L_0}^{L_0}\int_{\delta}^1\biggl(\int_0^y \int_0^{y'} (u^L_{,yy}u^L_{,xx}+ u^L_{,yyx}u^L_{,x})\dy''
		\dy'\biggr)^2\dx\dy\\
		&\lesssim  \frac {L_0^4}{L^2}
	\int_{\delta}^1 \bigg(\fint_{-L_0}^{L_0}(u^L_{,yy} )^2\dy''\bigg)\bigg(\fint_{-L_0}^{L_0}(u^L_{,xx} )^2\dy''\bigg)
		\dx \\
		&+  \frac {L_0^4}{L^2}
		\int_{\delta}^1
	\bigg(\fint_{-L_0}^{L_0}(u^L_{,yyx} )^2\dy''\bigg)\bigg(\fint_{-L_0}^{L_0}(u^L_{,x} )^2\dy''\bigg)
		\dx
		 \,.
	\end{split}
\end{equation}
Next we  show separately that:
\begin{equation}\label{uyy+ux}
	\fint_{-L_0}^{L_0}(u^L_{,yy})^2\dy''\lesssim \frac1\eps\,,\qquad \qquad
	\fint_{-L_0}^{L_0}(u^L_{,x})^2\dy''\lesssim\frac1{x\eps}\max\{x,\eps\}\,,
\end{equation}
\begin{equation}\label{uxx+uyyx}
\int_{\delta}^{1}\fint_{-L_0}^{L_0}(u^L_{,xx})^2\dy''\dx\lesssim \frac1{\delta^2}\,,
\qquad\qquad
\int_{\delta}^{1}\fint_{-L_0}^{L_0}(u^L_{,yyx})^2\dy''\dx\lesssim\frac1{\delta\eps} \,.
\end{equation}
 %
Since $u^L_{,yy}=\psi_\delta(x)f^L(x)\hat u^L_{,yy}$   by \eqref{eq:f(x)} and \eqref{eq:uyy+ux} we have
 \begin{equation*}
 	\label{ub:est-yy}
 \fint_{-L_0}^{L_0}(\hat u_{,yy}^L)^2\dy \lesssim \frac{1}{\eps}\,.
 \end{equation*}
By \eqref{der:ux}, \eqref{eq:f(x)}, \eqref{eq:uyy+ux}, \eqref{eq:est-u} and \eqref{eq:f'(x)-f''(x)} we have
\begin{equation*}\label{ub:est-x}
	\begin{split}
	\fint_{-L_0}^{L_0}(u^L_{,x})^2\dy
		\lesssim  \fint_{-L_0}^{L_0}(f^L(x))^2(\hat u^L_{,x})^2\dy
	&	+ \frac1{\delta^2}\fint_{-L_0}^{L_0}\chi_{(\delta,2\delta)}(f^L(x))^2(\hat u^L)^2\dy
		+ \fint_{-L_0}^{L_0}(\dot f^L(x))^2(\hat u^L)^2\dy\\
		&\lesssim \frac1\eps+ \frac1{\delta^2}\frac\delta\eps\max\{x,\eps\}\chi_{(\delta,2\delta)}+ \frac1{x\eps}\max\{x,\eps\}\\
	&	\lesssim \frac1\eps +\frac{1}{\delta}\chi_{(\delta,2\delta)}+ \frac1{x\eps}\max\{x,\eps\}\lesssim \frac1{x\eps}\max\{x,\eps\}
		\,.
	\end{split}
\end{equation*}
From  \eqref{der:ux} it follows 
\begin{equation*}
	\begin{split}
		u^L_{,xx}(x,y)&=\psi_\delta(x)f^L(x)\hat{u}^L_{,xx}(x,y)+\psi_\delta(x)\ddot f^L(x)\hat{u}^L(x,y)+	\ddot\psi_\delta(x)f^L(x)\hat{u}^L(x,y) \\
		&+ 2\dot\psi_\delta(x)f^L(x)\hat{u}^L_{,x}(x,y)+ 2\dot\psi_\delta(x)\dot f^L(x)\hat{u}^L(x,y)+ 2\psi_\delta(x)\dot f^L(x)\hat{u}^L_{,x}(x,y)\,.
	\end{split}
\end{equation*}
Hence by \eqref{eq:f(x)}, \eqref{eq:uxx-uxy-uxyy}, \eqref{eq:f'(x)-f''(x)}, \eqref{eq:est-u} and \eqref{eq:limsup} we have
\begin{equation}\label{ub:est-xx}
	\begin{split}
	\int_{\delta}^1\fint_{-L_0}^{L_0}(u_{,xx}^L)^2\dy\dx&
		\lesssim \int_{\delta}^1\fint_{-L_0}^{L_0}(f^L(x))^2(\hat u_{,xx}^L)^2\dy\dx+ \int_\delta^1\fint_{-L_0}^{L_0}(\ddot f^L(x))^2(\hat u^L)^2\dy\dx\\
		& 	+ \int_{\delta}^{1}\fint_{-L_0}^{L_0}(\dot f^L(x))^2(\hat u_{,x}^L)^2\dy\dx+
	 \frac{1}{\delta^4}\int_\delta^{2\delta}\fint_{-L_0}^{L_0}(f^L(x))^2(\hat u^L)^2\dy\dx\\
		& 
		+ \frac{1}{\delta^2}\int_{\delta}^{2\delta}\fint_{-L_0}^{L_0}(\dot f^L(x))^2(\hat u^L)^2\dy\dx
		+\frac{1}{\delta^2}\int_{\delta}^{2\delta}\fint_{-L_0}^{L_0}(f^L(x))^2(\hat u_{,x}^L)^2\dy\dx\\
		& \lesssim \frac1{\eps^2} +  \int_\delta^1\frac{1}{x^3\eps}\max\{x,\eps\}\dx+ \frac{1}{\delta\eps}
+ \frac1{\delta^4}\int_\delta^{2\delta}\frac x\eps\max\{x,\eps\}\dx\\
&
+ \frac{1}{\delta^2}\int_\delta^{2\delta}\frac1{x\eps}\max\{x,\eps\}\dx+\frac1{\delta^2}\lesssim 
\frac1{\eps^2}+ \frac{1}{\delta^2} + \frac1{\delta\eps}\lesssim \frac1{\delta^2}
	\,.
	\end{split}
\end{equation}
Analogously by \eqref{der:ux} it follows
\begin{equation*}\label{der:uxyy}
	u^L_{,yyx}(x,y)= \psi_\delta(x)f^L(x)\hat u^L_{,yyx}(x,y)+ \dot\psi_\delta(x)f^L(x)\hat u^L_{,yy}(x,y)+
	\psi_\delta(x)\dot f^L(x)\hat u^L_{,yy}(x,y)\,,
\end{equation*}
from which together with \eqref{eq:uxx-uxy-uxyy}, \eqref{eq:f(x)}, \eqref{eq:uyy+ux} and \eqref{eq:f'(x)-f''(x)}
\begin{equation*}\label{ub:est-yyx}
	\begin{split}
\int_\delta^1\fint_{-L_0}^{L_0}(u_{,yyx}^L)^2\dy\dx&\lesssim
\int_\delta^1 \fint_{-L_0}^{L_0}(f^L(x))^2(\hat u_{,yyx}^L)^2\dy\dx
+ \frac{1}{\delta^2} \int_\delta^{2\delta}\fint_{-L_0}^{L_0}(f^L(x))^2(\hat u_{,yy}^L)^2\dy\dx\\&
+ \int_\delta^1\fint_{-L_0}^{L_0}(\dot f^L(x))^2(\hat u_{,yy}^L)^2\dy\dx
\lesssim\frac1{\eps^2} + \frac1{\delta^2}\frac\delta\eps\frac{\delta}{\eps}+ \frac1{\delta\eps} \lesssim\frac1{\delta\eps}\,.
	\end{split} 
\end{equation*}
Now \eqref{uyy+ux} and \eqref{uxx+uyyx} yield
\begin{equation}\label{ub:est-I}
\begin{split}
	\frac {L_0^4}{L^2}
\int_{\delta}^1\bigg(\fint_{-L_0}^{L_0}(u^L_{,yy} )^2\dy''\bigg)\bigg(\fint_{-L_0}^{L_0}(u^L_{,xx} )^2\dy''\bigg)
	\dx
\lesssim 	\frac {L_0^4}{L^2}\frac{1}{\delta^2\eps}
\,,
\end{split}
\end{equation}
and
\begin{equation}\label{ub:est-II}
\begin{split}
 \frac {L_0^4}{L^2}
\int_{\delta}^1\bigg(\fint_{-L_0}^{L_0}(u^L_{,yyx} )^2\dy''\bigg)\bigg(\fint_{-L_0}^{L_0}(u^L_{,x} )^2\dy''\bigg)
\dx  \lesssim  \frac {L_0^4}{L^2} \left(\frac{1}{\delta}+\frac1\eps\right)\frac1{\delta\eps}\lesssim \frac {L_0^4}{L^2} \frac1{\delta^2\eps}
\,.
\end{split}
\end{equation}
Gathering together  \eqref{ub:est0}, \eqref{ub:est-I} and \eqref{ub:est-II} we find
\begin{equation}\label{ub:estIII}
\frac 1{L^2}
\strokedint_{-L_0}^{L_0}\int_{-1}^1\biggl(\int_0^y \int_0^{y'} (u^L_{,yy}u^L_{,xx}+ u^L_{,yyx}u^L_{,x})\dy''
\dy'\biggr)^2\dx\dy \lesssim \frac {L_0^4}{L^2} \frac1{\delta^2\eps}\,.
\end{equation}
It remains to estimate the third term on the right hand-side of \eqref{ub:est6}.
In a similar way, see in particular \eqref{ub:est-I} and \eqref{ub:est-II}, we have
\begin{equation}\label{ub:estIV}
\begin{split}
	\frac 1{L^2}
\strokedint_{-L_0}^{L_0}\int_{-1}^1(\dot C^L(x))^2y^2\dx\dy&\lesssim \frac{L_0^2}{L^2}\int_{-1}^1 \bigg(\fint_{-L_0}^{L_0}\int_0^y
(u^L_{,yy}u^L_{,xx}+ u^L_{,yyx}u^L_{,x})
\dy'\dy
\bigg)^2\dx\\
&\lesssim \frac{L_0^4}{L^2}
\int_{-1}^1\fint_{-L_0}^{L_0}  \bigg(\fint_{-L_0}^{L_0}(u^L_{,yy} )^2\dy''\bigg)\bigg(\fint_{-L_0}^{L_0}(u^L_{,xx} )^2\dy''\bigg) \dy\dx\\
&+ \frac{L_0^4}{L^2}
\int_{-1}^1\fint_{-L_0}^{L_0}  \bigg(\fint_{-L_0}^{L_0}(u^L_{,yyx} )^2\dy'' \bigg)\bigg(\fint_{-L_0}^{L_0}(u^L_{,x} )^2\dy''\bigg) \dy\dx\\
&\lesssim \frac {L_0^4}{L^2} \frac1{\delta^2\eps}\,.
\end{split}
\end{equation}
Gathering together \eqref{ub:est6}, \eqref{ub:est7}, \eqref{ub:estIII} and \eqref{ub:estIV} we infer
\begin{equation*}
	L^2	\strokedint_{-L}^{L}\int_{-1}^1
\bigl( w^L_{1,x}-1 \bigr)^2 \dx\dy\lesssim  \frac {L_0^4}{L^2} \frac1{\delta^2\eps}\,,
\end{equation*}
which together with \eqref{eq:est-ux^4} and \eqref{ub:est1} implies 
\begin{equation*}
	\begin{split}
		L^2 \strokedint_{-L}^{L}
		\int_{-1}^1
		\biggl( w^L_{1,x}+\frac{(u^L_{,x})^2}{2L^2}-1 \biggr)^2 \dx\dy
		\lesssim \frac{L_0^2}{L^2}\left(\frac1\delta+\frac1{\eps^2}\right)+
		 \frac {L_0^4}{L^2} \frac1{\delta^2\eps}\lesssim  \frac {L_0^4}{L^2} \frac1{\delta^2\eps}
	\,.
	\end{split}
\end{equation*}

\noindent
\textit{Step 4:} we show that
\begin{equation}\label{ub:step4}
L^2\strokedint_{-L}^{L} \int_{-1}^1 \Big ( w^L_{2,y}+\frac{(u_{,y}^L)^2}{2} -x \Big)^2 \dx\dy \le L^2 \left(\frac13+C\delta^3\right)\,.
\end{equation}
Recalling the definition of $w_2^L$ it holds
$
	w^L_{2,y}+\frac{(u_{,y}^L)^2}{2} -x = \psi_\delta^2(x)x-x 
$, so that 
\begin{equation*}
\begin{split}
\strokedint_{-L}^{L} \int_{-1}^1 \Big ( w^L_{2,y}+\frac{(u_{,y}^L)^2}{2} -x \Big)^2 \dx\dy= \int_{-1}^{1}(
 \psi_\delta^2(x)x-x  )^2\dx\le \int_{-1}^{2\delta} x^2\dx =\frac83\delta^3+
 \frac13\,.
\end{split}
\end{equation*}

\noindent
\textit{Step 5:} we show that
\begin{equation}\label{ub:step5}
\strokedint_{-L}^{L} \int_{-1}^1  \Big( L^2 w_{1,y}^L+w_{2,x}^L+u_{,x}^Lu_{,y}^L \Big)^2  \dx\dy=0\,.
\end{equation}
This is a trivial consequence of the following identity
\begin{equation*}
w^L_{1,y}=-\frac1{L^2}(w_{2,x}^L+u_{,x}^Lu_{,y}^L)\,,
\end{equation*}
which follows from the definitions of $w_1^L$ and $w_2^L$.
\medskip

\noindent
\textit{Step 6:} we show that
\begin{equation}\label{ub:step6}
\frac{1}{L^2}
\strokedint_{-L}^{L} \int_{-1}^1  \bigg(2(u^L_{,xy})^2+\frac{1}{L^2}(u^L_{,xx})^2\bigg)
\dx\dy\lesssim \left(\frac1{L^2}+\frac1{L^4}\right)\frac1{\delta^2}\,.
\end{equation}
We have 
\begin{equation*}
	u^L_{,xy}(x,y)= \psi_\delta(x)f^L(x)\hat u^L_{,xy}(x,y)+ \dot\psi_\delta(x)f^L(x)\hat u^L_{,y}(x,y)+
	\psi_\delta(x)\dot f^L(x)\hat u^L_{,y}(x,y)\,.
\end{equation*}
Thus by \eqref{eq:AL}, \eqref{eq:f(x)}, \eqref{eq:uxx-uxy-uxyy}, \eqref{eq:limsup} and \eqref{eq:f'(x)-f''(x)} 
\begin{equation*}\label{eq:est-xy}
	\begin{split}
		\strokedint_{-L}^{L} \int_{-1}^1  (u^L_{,xy})^2 
	\dx\dy&\lesssim
	\strokedint_{-L}^{L} \int_{\delta}^1 (f^L(x))^2 (\hat u^L_{,xy})^2 
	\dx\dy\\
	&+ \frac1{\delta^2}	\strokedint_{-L}^{L} \int_{\delta}^{2\delta} (f^L(x))^2 (\hat u^L_{,y})^2 
	\dx\dy\\&+ \strokedint_{-L}^{L} \int_{\delta}^{1} (\dot f^L(x))^2 (\hat u^L_{,y})^2 
	\dx\dy
	 \lesssim \frac1{\eps^2}+ 1+ \frac1{\eps}+\frac\eps\delta\lesssim\frac1{\delta\eps}\,.
	\end{split}
\end{equation*}
This together with \eqref{ub:est-xx} imply \eqref{ub:step6}.\\

\noindent
\textit{Conclusions.}  By Step 1 we have that $(\omega^L,u^L)$ converges to $\mu$ in the sense of Definition \ref{def:convergence}.
Moreover  by collecting the estimates showed in Steps 2--6, i.e.,
 \eqref{ub:step2}, \eqref{ub:step3}, \eqref{ub:step4}, \eqref{ub:step5} and \eqref{ub:step6} we find
\begin{equation}\label{the-end0}
	\begin{split}
\limsup_{L\to+\infty}L^2(R_L(w^L,u^L)-\mathcal E_0)&\le \mathcal{F}_\infty(\mu)+ C\lim_{L\to +\infty}
\omega(2M^2\delta)\log M\\
&+C\lim_{L\to +\infty} \Big(\frac{L_0^4}{L^2}\frac1{\delta^3M}+ L^2\delta^3+ \frac{1}{L^2\delta^2}
\Big)\,.
	\end{split}
\end{equation}
We now proceed with the choice of the parameters. We start by noticing  that for every $\overline M\in\mathbb N$ there exists $L_{\overline M}\ge\overline M^{1/2}$ such that
\begin{equation}\label{cond:M}
 \omega(2\overline M^2L^{-2/3})\le {\overline M}^{-1}\quad\forall L\ge L_{\overline M}\,.
\end{equation}
Since $L_{\overline M+1}\ge L_{\overline M}$ we set
$$ M=M(L):=\overline M\quad \text{ if } L\in[L_{\overline M},L_{\overline M+1})$$
Next we define
\begin{equation*}
	\delta:=\frac{\eps}{M}= L^{-2/3}M^{-1/8}\,\iff\, \eps= L^{-2/3}M^{7/8}\,,
\end{equation*}
and we choose 
\begin{equation*}
	L_0:=\frac Ln\in [M^{1/8},2M^{1/8})\,\iff\, n\in \Big[\frac{L}{2M^{1/8}},  \frac L{M^{1/8}} \Big)\,.
\end{equation*}
These choices ensures the validity of \eqref{cond_parameters1} and \eqref{cond_parameters2}. Indeed we have 
\begin{equation*}
	\eps= \frac1{L^{2/3}M^{-7/8}}= \frac1{L^{2/3}\overline M^{-7/8}}\le 
	\frac1{\overline M^{3}\overline M^{-7/8}}
	\quad\text{ if }L\in[L_{\overline M},L_{\overline M+1})\,,
\end{equation*}
where the last inequality follows from the fact that $L\ge L_{\overline M}\ge \overline M^{1/2}$. Hence $\eps\to0$ and $\delta\to0$ as $L\to+\infty$. In a similar way we have $M^2\delta\to0$ and $L_0,n\to+\infty$ as $L\to+\infty$.\\
Recalling that $\omega$ is monotone we find
$$ \omega(2M^2\delta)=\omega(2M^2L^{-2/3}M^{-1/8})\le \omega(2M^2L^{-2/3})= \omega(2\overline M^2L^{-2/3}) \quad\text{ if }L\in[L_{\overline M},L_{\overline M+1})\,,$$ 
which together with \eqref{cond:M} imply 
\begin{equation}\label{the-end1}	
	\log M\omega(2M^2\delta)\le \log( \overline M)\overline M^{-1}\quad \text{ if } L\in[L_{\overline M},L_{\overline M+1})\,.
\end{equation}
Moreover  if  $L\in[L_{\overline M},L_{\overline M+1})$ it holds
\begin{equation}\label{the-end2}
L^2\delta^3= L^2L^{-2}M^{-3/8}=\overline M^{-3/8}\,,
\end{equation}
 \begin{equation}\label{the-end3}
\frac1{L^2\delta^2}=\frac{L^{4/3}M^{1/4}}{L^2}=\frac{ M^{1/4}}{L^{2/3}} =
\frac{\overline M^{1/4}}{L^{2/3}} \le
\frac{\overline M^{1/4}}{(\overline M^{1/2})^{2/3}} =\overline M^{-1/12}\,,
 \end{equation}
and
\begin{equation}\label{the-end4}
 \frac{L_0^4}{L^2}\frac1{\delta^3M}  \le \frac{16M^{1/2}}{M M^{-3/8}}=16M^{-1/8}= 16\overline M^{-1/8} \,,
\end{equation}
 Eventually collecting \eqref{the-end0}--\eqref{the-end4} we infer 

\begin{equation*}
	\begin{split}
		\limsup_{L\to+\infty}L^2(R_L(w^L,u^L)-\mathcal E_0)\le \mathcal{F}_\infty(\mu)\,.
	\end{split}
\end{equation*}
\end{proof}
\section{Existence and regularity of minimizers of $\mathcal{F}_\infty$}\label{sec:regularity}
In this section we address the existence of minimizers of the limiting functional $\mathcal{F}_\infty$ and we discuss some properties such as equipartition of the energy.
In order to do that we need to introduce the definition of disintegration of measures in the $k$-variable, which is slightly different from the disintegration in the $x$-variable introduced in Section \ref{sec:preliminaries}.\\

\noindent
In the following for a given interval $I\subset\R$ we denote by $L^0(I)$ the space of functions $g\colon I\to \R$ that are Lebesgue measurable. Moreover the map $\pi_2\colon I\times\R\to \R$ denotes the canonical projection, and for any $\mu\in\mathcal{M}_b(I\times\R)$ we indicate by $(\pi_2)_\sharp\mu\in \mathcal{M}_b^+(\R)$ its push-forward with respect to the map $\pi_2$.

\begin{definition}[Disintegration of measures in the $k$-variable]\label{def:disintegration-in-k}
 Let $I\subset\R$ be an interval and let $\mu\in\mathcal{M}_b(I\times\R)$. We say that the family
\begin{equation*}
	{\big(\lambda\,,\,(g_k)_{k\in\R}\big)}\quad\text{ with } \quad\lambda\in \mathcal{M}_b(\R)\quad\text{ and } \quad g_k\in L^0(I)\quad \forall k\in \R\,,
\end{equation*}
is a disintegration of $\mu$ $($in the $k$-variable$)$ if $k\mapsto g_k$ is $\lambda$-measurable, $\int_0^1g_k\dx=1$ for $\lambda$-a.e. $k\in\R$ and
\begin{equation}\label{eq:disint-in-k}
\int_{I\times\R}f(x,k)\di\mu=\int_\R\int_If(x,k)g_k(x)\dx\di\lambda(k)\,,
\end{equation}
for every $f\in L^1(I\times\R;|\mu|)$.
\end{definition} 
With this definition at hand we can state the main result of this section.
\begin{theorem}[Minimizers of $\mathcal{F}_\infty$]\label{thm:minimizers}
	Let $\mathcal{M}_\infty$ and $\mathcal{F}_\infty$ be as in \eqref{def:limit_measures} and \eqref{def:F_infty} respectively. Then there exists $\hat\mu\in\mathcal{M}_\infty$ such that
	\begin{equation*}
	\mathcal{F}_\infty(\hat\mu)=\inf_{\mu\in\mathcal{M}_\infty}\mathcal{F}_\infty(\mu)\,.
	\end{equation*}
Moreover, every minimizer $\hat\mu$ satisfies the following properties:
 there exist a constant $C>0$ and a $(\pi_2)_\sharp\hat\mu$-measurable map $k\mapsto g_k$  with $g_k\in BV(0,1)$ for $(\pi_2)_\sharp\hat\mu$ a.e. $k\in\R$, such that 
 $${\big((\pi_2)_\sharp\hat\mu\,,\, (g_k)_{k\in\R}\big)}\quad \text{ is a disintegration of }\ \hat \mu\,,$$
\begin{equation}\label{equipart-section}
	\int_0^1k^2g_k(x)\dx=\int_0^1\frac1{4k^2}\left(\frac{\di\hat\mu_{,x}}{\di\hat\mu}\right)^2g_k(x)\dx\quad \text{for } \ (\pi_2)_\sharp\hat\mu\ \text{ a.e. $k\in\R$}\,,
\end{equation}
and 
\begin{equation}\label{small-k}
(\pi_2)_\sharp\hat\mu\Big(\big\{|k|<C\big\}\Big)=0\,.
\end{equation}

\end{theorem}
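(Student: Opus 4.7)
The plan is to establish existence first via the direct method, then derive the equipartition and BV regularity from a scaling argument in the $k$-variable, and finally obtain the lower bound on $|k|$ from the constraint combined with a Poincar\'e inequality.

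For existence, take a minimizing sequence $(\mu_n) \subset \mathcal{M}_\infty$. Approximating $\chi_{(0,1)}$ by smooth cutoffs in the constraint gives $\mu_n((0,1) \times \R) = \int_0^1 2x\,\mathrm dx = 1$, so $|\mu_n|$ is uniformly bounded; Young's inequality
\[
\left|\tfrac{\mathrm d\mu_{n,x}}{\mathrm d\mu_n}\right| \le \frac{1}{4k^2}\left(\tfrac{\mathrm d\mu_{n,x}}{\mathrm d\mu_n}\right)^{\!2} + k^2
\]
(valid on $\{k \ne 0\}$, while finite energy forces $\mu_{n,x}\res\{k=0\} = 0$) bounds $|\mu_{n,x}|((0,1)\times\R)$ by $\mathcal F_\infty(\mu_n) \le C$. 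Extract $\mu_n \stackrel{*}{\rightharpoonup} \hat\mu$; then inherit $\hat\mu_{,x} \ll \hat\mu$ by truncating to $\{1/N < |k| < N\}$ and applying Example~2.36 of \cite{AmFuPa:00} exactly as in step 2 of Proposition~\ref{prop:compactness}, together with the uniform tail bound $|\mu_{n,x}|(\{|k|\le\delta\}) \lesssim \delta$ coming from Cauchy-Schwarz applied to the energy. The constraint transfers through test functions $\phi(x)\psi_R(k)$ with tail control from $\int k^2\,\mathrm d\mu_n \le C$, as in step~3 there. Reshetnyak's theorem (as in Proposition~\ref{prop:lower_bound}) gives $\mathcal F_\infty(\hat\mu) \le \liminf_n \mathcal F_\infty(\mu_n)$, so $\hat\mu \in \mathcal{M}_\infty$ is a minimizer.

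For the equipartition, fix $\beta \in C_c^\infty(\R)$ and consider the competitors $\tilde\mu_\eps := (T_\eps)_\sharp \hat\mu$ defined via $T_\eps(x,k) := (x, (1 + \eps\beta(k))k)$. For $\eps$ small $T_\eps$ is a diffeomorphism of $\R$ in $k$; since it does not act on $x$ the constraint is preserved, and since it commutes with $\partial_x$ we have $\tilde\mu_{\eps,x} = (T_\eps)_\sharp \hat\mu_{,x} \ll \tilde\mu_\eps$, so $\tilde\mu_\eps \in \mathcal M_\infty$. Disintegrating $\hat\mu = g_k(x)\,\mathrm dx\,\mathrm d\nu(k)$ with $\nu := (\pi_2)_\sharp\hat\mu$ and using the transformation law $(\mathrm d\tilde\mu_{\eps,x}/\mathrm d\tilde\mu_\eps)\circ T_\eps = \mathrm d\hat\mu_{,x}/\mathrm d\hat\mu$, a direct computation gives
\[
\mathcal F_\infty(\tilde\mu_\eps) = \int_\R (1 + \eps\beta(k))^2 U(k)\,\mathrm d\nu(k) + \int_\R (1 + \eps\beta(k))^{-2} V(k)\,\mathrm d\nu(k),
\]
where $U(k) := \int_0^1 k^2 g_k\,\mathrm dx$ and $V(k) := \int_0^1 \tfrac{1}{4k^2}(\mathrm d\hat\mu_{,x}/\mathrm d\hat\mu)^2 g_k\,\mathrm dx$. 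Vanishing of $\partial_\eps \mathcal F_\infty(\tilde\mu_\eps)|_{\eps=0}$ for every $\beta$ forces $\int \beta (U - V)\,\mathrm d\nu = 0$, hence $U = V$ $\nu$-a.e., which is~\eqref{equipart-section}. As an immediate byproduct, finiteness of $V(k)$ combined with $\int_0^1 g_k\,\mathrm dx = 1$ implies $\sqrt{g_k} \in H^1(0,1)$, whence $g_k \in W^{1,1}(0,1) \subset BV(0,1)$.

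For~\eqref{small-k}, I use the constraint to extract a Dirichlet-type condition at $x = 0$. Taking $x_n \searrow 0$ in $\int g_k(x_n)\,\mathrm d\nu(k) = 2x_n$ and applying Fatou's lemma yield $\liminf_n g_k(x_n) = 0$ for $\nu$-a.e. $k$; continuity of the $W^{1,1}$ representative then forces $g_k(0^+) = 0$ $\nu$-a.e. Rewriting the equipartition as $k^4 = \int_0^1 \dot h_k^2\,\mathrm dx$ with $h_k := \sqrt{g_k}$, $h_k(0^+) = 0$, and $\int_0^1 h_k^2\,\mathrm dx = 1$, the Poincar\'e inequality with Dirichlet condition at $0$ and natural Neumann condition at $1$ (first eigenvalue $\pi^2/4$) gives $k^4 \ge \pi^2/4$, i.e.~$|k| \ge \sqrt{\pi/2}$ for $\nu$-a.e.~$k$, which is~\eqref{small-k} with $C = \sqrt{\pi/2}$. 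The main obstacle I anticipate is the variational step, specifically verifying the transformation law for $\mathrm d\tilde\mu_{\eps,x}/\mathrm d\tilde\mu_\eps$ (for which $T_\eps$ must be injective on $\mathrm{supp}\,\hat\mu$, a mild regularity requirement on $\beta$ that is met by smooth bounded $\beta$ with bounded $k\beta'(k)$) and justifying the exchange of $\partial_\eps$ and the integral at $\eps = 0$, which should follow by dominated convergence once $\beta$ is fixed in $C_c^\infty(\R)$ and then extended by density.
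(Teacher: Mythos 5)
Your existence argument and your small-$k$ argument follow the paper's route (the paper packages existence in Lemma~\ref{lem:prop-F}: mass bound from the constraint, Young's inequality for $|\mu_{j,x}|$, Reshetnyak for lower semicontinuity, and the transfer of the constraint and of $\mu_{,x}\ll\mu$ as in Proposition~\ref{prop:compactness}; its Step~2 is the same Poincar\'e argument, and your Fatou justification of $g_k(0^+)=0$ is if anything more explicit than the paper's). For the equipartition \eqref{equipart-section}, however, you take a genuinely different route. The paper argues by contradiction: if equipartition fails on a set $E_\sigma$ of positive $(\pi_2)_\sharp\hat\mu$-measure, it pushes forward only the part of $\hat\mu$ lying over $E_\sigma$ under $(x,k)\mapsto(x,(1+\delta)k)$ and compares energies, which forces it to handle the overlap of $E_\sigma$ and $(1+\delta)E_\sigma$ via convexity and $1$-homogeneity of $\mathcal F_\infty$ (hence the Benamou--Brenier machinery of Lemma~\ref{lem:prop-F}). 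Your smooth inner variation $T_\eps(x,k)=(x,(1+\eps\beta(k))k)$, $\beta\in C_c^\infty(\R)$, avoids all of that: the constraint is untouched since $T_\eps$ does not act on $x$, $\tilde\mu_{\eps,x}=(T_\eps)_\sharp\hat\mu_{,x}$, injectivity for small $|\eps|$ gives the stated transformation law of the Radon--Nikod\'ym derivative, differentiation under the integral is justified because $U,V\in L^1(\nu)$ (their integrals are the two halves of $\mathcal F_\infty(\hat\mu)$), and the two-sided variation yields $\int\beta\,(U-V)\,\mathrm{d}\nu=0$ for all $\beta$, hence $U=V$ $\nu$-a.e. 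This is a sound and arguably cleaner first-variation proof of \eqref{equipart-section}, and it also delivers an explicit constant in \eqref{small-k}.

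There is, however, one genuine gap: you assume without proof that $\hat\mu$ disintegrates as $g_k(x)\,\mathrm{d}x\,\mathrm{d}\nu(k)$, i.e.\ that the conditional measures on the fibers $\{k=\mathrm{const}\}$ are absolutely continuous with respect to Lebesgue measure, and you implicitly use the fiberwise identity $\dot g_k=\frac{\mathrm{d}\hat\mu_{,x}}{\mathrm{d}\hat\mu}(\cdot,k)\,g_k$ when you rewrite $V(k)=\frac{1}{k^2}\int_0^1\bigl(\frac{\mathrm{d}}{\mathrm{d}x}\sqrt{g_k}\bigr)^2\mathrm{d}x$, when you claim that finiteness of $V(k)$ gives $\sqrt{g_k}\in H^1(0,1)$ and $g_k\in W^{1,1}\subset BV$, and when you invoke continuity of $g_k$ up to $x=0$. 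None of this is automatic: the abstract Disintegration Theorem only produces probability measures $\nu_k$ on $(0,1)$, and showing $\nu_k=g_k\mathcal L^1$ with $g_k\in W^{1,1}(0,1)$ and $\dot g_k=\frac{\mathrm{d}\hat\mu_{,x}}{\mathrm{d}\hat\mu}(\cdot,k)g_k\,\mathcal L^1$ is exactly the content of the paper's Lemma~\ref{lem-disint-k} (a duality computation identifying $(\nu_k)_{,x}$ as $\frac{\mathrm{d}\hat\mu_{,x}}{\mathrm{d}\hat\mu}(\cdot,k)\nu_k$, followed by a mollification argument upgrading $\nu_k$ to an $L^\infty$ and then $W^{1,1}$ density). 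Since the theorem's statement itself asserts the existence of such a $BV$ disintegration, this step must be supplied; note that your variational identity $U=V$ could be derived with the abstract conditional measures $\nu_k$, but the subsequent Poincar\'e step cannot. With Lemma~\ref{lem-disint-k} (or an equivalent argument) inserted, your proof is complete.
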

As a direct consequence minimizers of $\mathcal{F}_\infty$ satisfy equipartition of the energy.
\begin{cor}[Equipartition of the energy]
	Let $\hat\mu\in\mathcal{M}_\infty$ be a minimizers of $\mathcal{F}_\infty$. Then it holds
	\begin{equation*}
	\int_{(0,1)\times\R}k^2\di\hat\mu= 	\int_{(0,1)\times\R} \frac1{4k^2}\Big(\frac{\di\hat\mu_{,x}}{\di\hat\mu}\Big)^2  \di\hat\mu\,.
	\end{equation*}
\end{cor}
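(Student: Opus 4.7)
The corollary is essentially an immediate consequence of the fiberwise equipartition identity \eqref{equipart-section} combined with the disintegration formula \eqref{eq:disint-in-k}. The plan is to integrate the pointwise-in-$k$ equality against $(\pi_2)_\sharp\hat\mu$ and recognize the resulting double integrals as the two sides of the claimed identity.

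More precisely, by Theorem~\ref{thm:minimizers} any minimizer $\hat\mu$ admits a disintegration ${\big((\pi_2)_\sharp\hat\mu,(g_k)_{k\in\R}\big)}$ in the $k$-variable with $g_k\in BV(0,1)\subset L^1(0,1)$ for $(\pi_2)_\sharp\hat\mu$-a.e.\ $k$. Applying Definition~\ref{def:disintegration-in-k} to the Borel function $f(x,k)=k^2$, which is integrable w.r.t.\ $|\hat\mu|$ since $\mathcal{F}_\infty(\hat\mu)<+\infty$, gives
\begin{equation*}
\int_{(0,1)\times\R}k^2\di\hat\mu
=\int_\R\int_0^1 k^2 g_k(x)\dx\di(\pi_2)_\sharp\hat\mu(k).
\end{equation*}
Likewise, applying it to $f(x,k)=\tfrac{1}{4k^2}\bigl(\tfrac{\di\hat\mu_{,x}}{\di\hat\mu}(x,k)\bigr)^2$, whose integrability w.r.t.\ $|\hat\mu|$ is again guaranteed by $\mathcal{F}_\infty(\hat\mu)<+\infty$, yields
\begin{equation*}
\int_{(0,1)\times\R}\frac{1}{4k^2}\Bigl(\frac{\di\hat\mu_{,x}}{\di\hat\mu}\Bigr)^2\di\hat\mu
=\int_\R\int_0^1\frac{1}{4k^2}\Bigl(\frac{\di\hat\mu_{,x}}{\di\hat\mu}\Bigr)^2 g_k(x)\dx\di(\pi_2)_\sharp\hat\mu(k).
\end{equation*}

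The equipartition identity \eqref{equipart-section} of Theorem~\ref{thm:minimizers} asserts precisely that the two inner integrals coincide for $(\pi_2)_\sharp\hat\mu$-a.e.\ $k\in\R$. Integrating this pointwise equality against the (positive) measure $(\pi_2)_\sharp\hat\mu$ identifies the right-hand sides of the two displays above, and hence also the left-hand sides, proving the claim. No further estimate is needed; the only (very mild) point to check is that both integrands are indeed $|\hat\mu|$-integrable so that the disintegration formula applies, and this is immediate from $\hat\mu\in\mathcal{M}_\infty$ with $\mathcal{F}_\infty(\hat\mu)<+\infty$. There is no substantial obstacle here — all the work is hidden in the fiberwise equipartition established in Theorem~\ref{thm:minimizers}, and the corollary is merely its integrated form.
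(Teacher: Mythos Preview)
Your proof is correct and follows essentially the same approach as the paper: disintegrate $\hat\mu$ in the $k$-variable via Theorem~\ref{thm:minimizers}, use the fiberwise equipartition identity \eqref{equipart-section} to equate the inner integrals, and apply the disintegration formula again to recover the two sides of the claimed identity. Your additional remark on the $|\hat\mu|$-integrability of the two integrands is a welcome clarification but not a departure from the paper's argument.
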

\begin{proof}
	By Theorem \ref{thm:minimizers} it holds
	\begin{equation*}
\begin{split}
	\int_{(0,1)\times\R}k^2\di\hat\mu&=  \int_\R 	\int_0^1k^2g_k(x)\dx\di(\pi_2)_\sharp\hat\mu\\
	&=\int_\R \int_0^1\frac1{4k^2}\left(\frac{\di\hat\mu_{,x}}{\di\hat\mu}\right)^2g_k(x)\dx \di(\pi_2)_\sharp\hat\mu
	=\int_{(0,1)\times\R} \frac1{4k^2}\Big(\frac{\di\hat\mu_{,x}}{\di\hat\mu}\Big)^2  \di\hat\mu\,.
\end{split}
	\end{equation*}
\end{proof}
We divide the proof of Theorem \ref{thm:minimizers} into several steps. Precisely we need to show that the functional $\mathcal F_\infty$ is convex and lower semi-continuous and that the class of measures $\mathcal M_\infty$ admits a disintegration in the $k$-variable of the form ${\big((\pi_2)_\sharp\hat\mu\,,\, (g_k)_{k\in\R}\big)}$.
First of all we recall that by Remark \ref{rem:limit} \ref{rem-ii} we have
	\begin{equation*}
	\mathcal F_\infty(\mu)= \int_{(0,1)\times\R} 
	k^2 \di\mu+   \int_{(0,1)\times\R} 
	\frac{1}{4k^2}\Bigl(\frac{\di \mu}{\di|\tilde\mu|}\Bigr)^{-1}\Bigl(\frac{\di \mu_{,x}}{\di|\tilde\mu|}\Bigr)^2
	\di|\tilde \mu|\,,
\end{equation*}
with $\tilde\mu=(\mu,\mu_{,x})$ and $|\tilde\mu|$ its total variation. This alternative formulation turns out to be more convenient, in particular the term 
\begin{equation*}
\int_{(0,1)\times\R} 
\frac{1}{4k^2}\Bigl(\frac{\di \mu}{\di|\tilde\mu|}\Bigr)^{-1}\Bigl(\frac{\di \mu_{,x}}{\di|\tilde\mu|}\Bigr)^2
\di|\tilde \mu|
\end{equation*}
is reminiscent of the Benamou-Brenier functional used in optimal transport which enjoys nice properties such as lower semicontinuity and convexity. Here  we consider a specific case and we refer to \cite[Section 5.3.1]{Sa15} for a general treatment of this topic.  

\medskip

\noindent
For any $\rho,E\in\mathcal{M}_b((0,1)\times\R)$ the Benamou-Brenier functional is defined as
\begin{equation}\label{def:BB}
\mathscr{B}_2(\rho,E):=\sup\left\{\int_{(0,1)\times\R}a(x,k)\di\rho+\int_{(0,1)\times\R}b(x,k)\di E\colon (a,b)\in C_b((0,1)\times\R;K_2)
\right\}\,,
\end{equation}
where 
\begin{equation*}
K_2:=\left\{(z_1,z_2)\in\R^2\colon z_1+\frac12z_2^2\le0\right\}\,.
\end{equation*}
We next recall some properties of $\mathscr{B}_2$ which follow from \cite[Proposition 5.18]{Sa15}. Then we state and prove two intermediate Lemmas (cf. Lemma \ref{lem:prop-F} and Lemma \ref{lem-disint-k}) which will be used to show the validity of Theorem \ref{thm:minimizers}.
\begin{prop}[Properties of $\mathscr{B}_2$]\label{prop:BB}
	The functional $\mathscr{B}_2$ is convex and lower semi-continuous on the space $(\mathcal{M}_b((0,1)\times\R))^2$. Moreover, the following property hold: if both $\rho$ and $E$ are absolutely continuous w.r.t. a same positive measure $\lambda$ on $(0,1)\times\R$, then
	\begin{equation*}
		\mathscr{B}_2(\rho,E)=\int_{(0,1)\times\R}\frac12\Big(\frac{\di\rho}{\di\lambda}\Big)^{-1} \Big(\frac{\di E}{\di\lambda}\Big)^2\di\lambda\,.
	\end{equation*}
\end{prop}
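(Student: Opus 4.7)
The plan is to exploit the variational nature of the definition for convexity and lower semi-continuity, and then identify the pointwise Legendre structure to obtain the representation formula.

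\textbf{Convexity and lower semi-continuity.} I would observe that $\mathscr{B}_2$ is by definition a supremum of functionals of the form
\[
L_{a,b}(\rho,E) := \int_{(0,1)\times\R} a \, \di\rho + \int_{(0,1)\times\R} b \, \di E,
\]
each of which is affine (hence convex) in $(\rho,E)$. To get lower semi-continuity with respect to weak-$*$ convergence in $(\mathcal M_b((0,1)\times\R))^2$, I would first argue that the supremum in \eqref{def:BB} is unchanged if we restrict to $(a,b)\in C_c((0,1)\times\R;K_2)$. This is because $(0,0)\in K_2$ and $K_2$ is star-shaped with respect to the origin, so multiplying any $(a,b)\in C_b((0,1)\times\R;K_2)$ by a cutoff $\chi\in C_c((0,1)\times\R;[0,1])$ yields $(\chi a,\chi b)\in C_c((0,1)\times\R;K_2)$, and a standard exhaustion argument recovers the original integral. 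For $(a,b)\in C_c$ the map $L_{a,b}$ is weak-$*$ continuous, so its supremum is both convex and lower semi-continuous.

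\textbf{Representation formula: the pointwise structure.} I would identify the integrand appearing on the right-hand side with the support function of $K_2$. Namely, for $(s,t)\in\R^2$ define
\[
f(s,t) := \sup\{as+bt \colon (a,b)\in K_2\}.
\]
A direct pointwise maximization (take $a=-b^2/2$, optimize in $b$) shows that $f(s,t)=\tfrac{t^2}{2s}$ when $s>0$, $f(0,0)=0$, and $f(s,t)=+\infty$ otherwise. Thus, assuming $\di\rho=\rho'\di\lambda$ and $\di E=E'\di\lambda$, the upper bound $\mathscr{B}_2(\rho,E)\le \int \tfrac12(\rho')^{-1}(E')^2\,\di\lambda$ follows immediately from the pointwise inequality $a(x,k)\rho'(x,k)+b(x,k)E'(x,k)\le f(\rho'(x,k),E'(x,k))$ (valid since $(a,b)$ takes values in $K_2$) by integration against $\lambda$.

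\textbf{Representation formula: the matching lower bound.} This is the technical core, and is the step I expect to be the main obstacle, because the pointwise optimal selection $(a^\ast,b^\ast) = (-\tfrac12(E'/\rho')^2,\, E'/\rho')$ on $\{\rho'>0\}$ is in general only $\lambda$-measurable and unbounded, whereas the test functions in \eqref{def:BB} must be continuous and bounded. I would handle this in two steps. First, assuming $\int f(\rho',E')\,\di\lambda$ is finite, I would truncate the densities to $\rho'_n := \rho' \vee \tfrac1n$ and $E'_n := (E'\wedge n)\vee(-n)$, and approximate the resulting bounded measurable $(a^\ast_n,b^\ast_n)$ by continuous, $K_2$-valued functions using Lusin's theorem together with the cutoff trick above (which preserves $K_2$). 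Passing to the limit via Fatou yields $\mathscr{B}_2(\rho,E)\ge\int f(\rho',E')\,\di\lambda$. Second, if $\int f(\rho',E')\,\di\lambda = +\infty$, I would exhibit divergence of the supremum by choosing: on a set where $\rho'<0$ on positive $\lambda$-measure, large negative $a$ with $b=0$; on a set where $\rho'=0$ but $E'\ne 0$, large $b$ (in the sign of $E'$) paired with $a=-b^2/2$, noticing that $a\cdot 0 = 0$ so the bad term disappears. Together, the two inequalities give the desired identity.
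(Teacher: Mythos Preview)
Your proof is correct. Note, however, that the paper does not actually prove this proposition: it is stated as a recalled fact and attributed to \cite[Proposition 5.18]{Sa15} (Santambrogio's optimal transport book). Your argument---supremum of affine functionals for convexity/lsc, reduction to $C_c$ test functions via the star-shapedness of $K_2$ about the origin, identification of the integrand as the support function of $K_2$, and the Lusin-type approximation for the matching lower bound---is precisely the standard proof of this Benamou--Brenier representation, which is also the content of the cited reference. So there is no genuine methodological difference to compare; you have simply written out what the paper chose to quote.
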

\begin{lemma}[Properties of $\mathcal F_\infty$]\label{lem:prop-F}
	The functional $\mathcal{F}_\infty$ is 1-homogeneous, convex and lower semi-continuous on the space $\mathcal{M}_b((0,1)\times\R)$. Moreover let $(\mu_j)\subset\mathcal M_\infty$ be a minimizing sequence, i.e.,
	\begin{equation*}
		\mathcal{F}_\infty(\mu_j)\to \inf_{\mu \in \mathcal M_\infty} 	\mathcal{F}_\infty(\mu)\,.
	\end{equation*}
Then $(\mu_j)$ is pre-compact in $\mathcal M_\infty$, i.e., there exists $\hat\mu\in\mathcal{M}_\infty$ such that, up to subsequence, $\mu_j\stackrel{*}{\rightharpoonup}\hat\mu$. Thus, in particular, 
\begin{equation*}
\mathcal{F}_\infty(\hat\mu)=\inf_{\mu \in \mathcal M_\infty} 	\mathcal{F}_\infty(\mu)\,.
\end{equation*}
\end{lemma}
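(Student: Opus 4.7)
The plan is to interpret $\mathcal F_\infty$ in the Benamou-Brenier framework of Proposition~\ref{prop:BB}, so that $1$-homogeneity, convexity, and lower semicontinuity follow essentially from that result, while compactness of a minimizing sequence follows from the constraint defining $\mathcal M_\infty$ together with the energy bound.

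First I would use the reformulation in Remark~\ref{rem:limit}~\ref{rem:limit(ii)} to write
$$\mathcal F_\infty(\mu) = \int_{(0,1)\times\R} k^2 \di\mu + \widetilde{\mathscr B}(\mu, \mu_{,x}),$$
where $\widetilde{\mathscr B}$ is the $k$-weighted analogue of $\mathscr B_2$, defined for $\rho, E \in \mathcal M_b((0,1)\times\R)$ by
$$\widetilde{\mathscr B}(\rho, E) := \int_{(0,1)\times\R} \frac{1}{4k^2} \Big(\frac{\di \rho}{\di |\tilde\eta|}\Big)^{-1} \Big(\frac{\di E}{\di |\tilde\eta|}\Big)^2 \di |\tilde\eta|,$$
with $\tilde\eta := (\rho, E)$, and extended by $+\infty$ whenever $E \not\ll \rho$. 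A Legendre transform computation analogous to the one underlying Proposition~\ref{prop:BB} yields the sup characterization
$$\widetilde{\mathscr B}(\rho, E) = \sup\Big\{\textstyle \int a \di\rho + \int b \di E : (a,b) \in C_c((0,1)\times\R;\R^2),\ a + k^2 b^2 \le 0 \Big\},$$
exhibiting $\widetilde{\mathscr B}$ as a supremum of linear continuous functionals, hence $1$-homogeneous, convex, and weak-$*$ lower semicontinuous on $\mathcal M_b((0,1)\times\R)^2$, and equal to $+\infty$ on any pair $(\rho,E)$ with $E \not\ll \rho$. The linear term $\mu \mapsto \int k^2 \di\mu$ is trivially $1$-homogeneous and convex; it is weak-$*$ lsc by the standard truncation argument using test functions $k^2 \varphi_R(k)$ with $\varphi_R \in C_c(\R)$, $\varphi_R \nearrow 1$.

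For compactness, let $(\mu_j) \subset \mathcal M_\infty$ be a minimizing sequence. The constraint in the definition of $\mathcal M_\infty$ fixes the $x$-marginal of each $\mu_j$ to be $2x\,\mathcal L^1 \res (0,1)$, so $\mu_j((0,1)\times\R) = 1$ uniformly. Combining this with $\int k^2 \di\mu_j \le \mathcal F_\infty(\mu_j) \le C$ and Markov's inequality $\mu_j(\{|k|>R\}) \le CR^{-2}$, Prokhorov's theorem yields a subsequence along which $\mu_j \stackrel{*}{\rightharpoonup} \hat\mu$ in $\mathcal M_b^+((0,1)\times\R)$, and the integral constraint passes to the limit since $C_c^\infty(0,1) \subset C_0((0,1)\times\R)$. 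A Cauchy-Schwarz estimate
$$|\mu_{j,x}|((0,1)\times\R) \le 2\Big(\int k^2 \di\mu_j\Big)^{1/2}\Big(\int \frac{1}{4k^2}\Big(\frac{\di\mu_{j,x}}{\di\mu_j}\Big)^2 \di\mu_j\Big)^{1/2} \le C$$
then provides a further weak-$*$ subsequence $\mu_{j,x} \stackrel{*}{\rightharpoonup} \sigma \in \mathcal M_b((0,1)\times\R)$, and pairing with $C_c^\infty((0,1)\times\R)$ test functions (as in Step~1 of the proof of Proposition~\ref{prop:compactness}) identifies $\sigma = \hat\mu_{,x}$.

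Finally, the already-established lower semicontinuity yields $\mathcal F_\infty(\hat\mu) \le \liminf_j \mathcal F_\infty(\mu_j) < \infty$; invoking the $+\infty$-barrier of $\widetilde{\mathscr B}$ on non-absolutely-continuous pairs forces $\hat\mu_{,x} \ll \hat\mu$, so $\hat\mu \in \mathcal M_\infty$ and hence is a minimizer. I expect the main technical obstacle to be making the Legendre-dual identification of the second term of $\mathcal F_\infty$ with a $k$-weighted Benamou-Brenier functional precise enough that Proposition~\ref{prop:BB} delivers convexity, lsc, and the crucial $+\infty$-barrier --- the latter being what promotes the lsc energy bound into the absolute-continuity statement required for $\hat\mu \in \mathcal M_\infty$.
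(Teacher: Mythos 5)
Your proposal is correct in substance and follows the same overall strategy as the paper (Benamou--Brenier convexity plus the direct method), with two implementation differences worth noting. For the transport-type term the paper does not re-derive a weighted duality formula: it keeps the unweighted $\mathscr B_2$ of Proposition~\ref{prop:BB} and rescales the momentum variable, applying $\mathscr B_2\big(\mu,\tfrac{1}{\sqrt2\,k}\mu_{,x}\big)$ after checking via Young's inequality and the finiteness of $\mathcal F_\infty$ that $\tfrac{1}{\sqrt2\,k}\mu_{,x}$ is a bounded measure; this buys Santambrogio's representation verbatim, whereas your $\widetilde{\mathscr B}$ with constraint $a+k^2b^2\le 0$ requires you to redo the representation (your pointwise Legendre computation is right, but you still must handle the degeneration of the weight at $k=0$ and as $|k|\to\infty$, e.g.\ by localizing in $k$ --- exactly the localization the paper performs when invoking \cite[Example~2.36]{AmFuPa:00} in Step~2 of Proposition~\ref{prop:compactness}). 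Second, the paper proves lower semicontinuity not from the dual (sup) representation but by Reshetnyak's theorem, as in Proposition~\ref{prop:lower_bound}; both routes work, and yours has the small advantage of delivering the $+\infty$ barrier (hence $\hat\mu_{,x}\ll\hat\mu$) in the same stroke, which the paper instead obtains by repeating the absolute-continuity argument of Proposition~\ref{prop:compactness}. Your compactness step (mass $\equiv 1$ from the constraint, $\int k^2\di\mu_j\le C$, total-variation bound on $\mu_{j,x}$ by Cauchy--Schwarz where the paper uses Young, identification of the limit of $\mu_{j,x}$ by testing with $C_c^\infty$) coincides with the paper's.

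One justification is wrong as stated: $C_c^\infty(0,1)$ is \emph{not} contained in $C_0((0,1)\times\R)$, since $\phi(x)$ has no decay in $k$, so weak-$*$ convergence alone does not let the constraint pass to the limit. This is harmless because the correct mechanism is already in your text: the Markov bound $\mu_j(\{|k|>R\})\le CR^{-2}$, together with the pinned $x$-marginal $2x\,\dx$, gives tightness, hence narrow convergence along the subsequence, against which the bounded continuous function $\phi(x)$ does pass; equivalently one argues with a cutoff $\psi_R(k)$ and the uniform tail estimate, as in Step~3 of the paper's Proposition~\ref{prop:compactness}. When you invoke the $+\infty$ barrier of $\widetilde{\mathscr B}$ to conclude $\hat\mu_{,x}\ll\hat\mu$, make explicit that it suffices to work on $(0,1)\times(-N,N)$, where the weight $1/(4k^2)$ is bounded below, and then let $N\to\infty$.
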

\begin{proof}\textit{1-homogeneity.} Let $\alpha>0$ and let $\mu\in\mathcal{M}_\infty$. Then a direct computation shows that 
	\begin{equation*}
		\frac{\di(\alpha\mu_{,x})}{\di(\alpha\mu)}=\frac{\di\mu_{,x}}{\di\mu}\,,
	\end{equation*}
	from which we readily deduce $\mathcal{F}_\infty(\alpha\mu)=\alpha\mathcal{F}_\infty(\mu)$.
	
	\noindent
\textit{Convexity.} 
Let $\mu_1,\mu_2\in\mathcal M_\infty$, $t\in(0,1)$. Assume that $\mathcal{F}_\infty(\mu_1),\mathcal{F}_\infty(\mu_1)<+\infty$, otherwise there is nothing to prove.
Clearly $\mu_3=t\mu_1+(1-t)\mu_2\in\mathcal M_\infty$ and
\begin{equation*}
	\mathcal F_\infty(\mu_3)= \int_{(0,1)\times\R} 
	k^2 \di\mu_3+   \int_{(0,1)\times\R} 
	\frac{1}{4k^2}\Bigl(\frac{\di \mu_3}{\di|\tilde\mu_3|}\Bigr)^{-1}\Bigl(\frac{\di \mu_{3,x}}{\di|\tilde\mu_3|}\Bigr)^2
	\di|\tilde \mu_3|\,.
\end{equation*}
We have
\begin{equation}\label{conv1}
\int_{(0,1)\times\R} 
k^2 \di\mu_3= t\int_{(0,1)\times\R} 
k^2 \di\mu_1+ (1-t)\int_{(0,1)\times\R} 
k^2 \di\mu_2\,.
\end{equation}
Next for $i=1,2$ set $\rho_i:=\mu_i$, $E_i:=\frac1{\sqrt2k}{\mu_{1,x}}$,  $\lambda_i=|\tilde\mu_i|$, and note that they belong to $\mathcal{M}_b((0,1)\times\R)$. Indeed  $\rho_i,\lambda_i$ are bounded by definition, whereas by Young inequality, it holds 
\begin{equation*}
	\begin{split}
	E_i((0,1)\times\R)&=\int_{(0,1)\times\R}\frac1{\sqrt2k}\di\mu_{i,x}=\int_{(0,1)\times\R}\frac1{\sqrt2k}\Big(\frac{\di\mu_{i,x}}{\di\mu_i}\Big)\di\mu_{i}\\
	&	\le \int_{(0,1)\times\R}\frac1{4k^2}\Big(\frac{\di\mu_{i,x}}{\di\mu_i}\Big)^2\di\mu_{i}
+ \frac12\int_{(0,1)\times\R}\di\mu_i\le \mathcal{F}_\infty(\mu_i)+\mu_i((0,1)\times\R)<+\infty\,.
	\end{split}
\end{equation*}
Thus we can invoke Proposition \ref{prop:BB} and get
\begin{equation}\label{conv2}
\begin{split}
	 \int_{(0,1)\times\R} &
	\frac{1}{4k^2}\Bigl(\frac{\di \mu_3}{\di|\tilde\mu_3|}\Bigr)^{-1}\Bigl(\frac{\di \mu_{3,x}}{\di|\tilde\mu_3|}\Bigr)^2
	\di|\tilde \mu_3|
	= \mathscr{B}_2\left(\mu_3,\frac1{\sqrt2k}{\mu_{3,x}} \right)\\
	&\le t\mathscr{B}_2\left(\mu_1,\frac1{\sqrt2k}{\mu_{1,x}} \right)+
	(1-t)\mathscr{B}_2\left(\mu_2,\frac1{\sqrt2k}{\mu_{2,x}} \right)\\
	&\le t \int_{(0,1)\times\R} 
	\frac{1}{4k^2}\Bigl(\frac{\di \mu_1}{\di|\tilde\mu_1|}\Bigr)^{-1}\Bigl(\frac{\di \mu_{1,x}}{\di|\tilde\mu_1|}\Bigr)^2
	\di|\tilde \mu_1|+(1-t)\int_{(0,1)\times\R} 
	\frac{1}{4k^2}\Bigl(\frac{\di \mu_2}{\di|\tilde\mu_2|}\Bigr)^{-1}\Bigl(\frac{\di \mu_{2,x}}{\di|\tilde\mu_2|}\Bigr)^2
	\di|\tilde \mu_2|\,.
\end{split}
\end{equation}
Finally combining \eqref{conv1} with \eqref{conv2} we get
\begin{equation*}
\mathcal{F}_\infty(\mu_3)\le t \mathcal{F}_\infty(\mu_1)+(1-t)\mathcal{F}_\infty(\mu_2)\,.
\end{equation*}

\noindent
\textit{Lower semi-continuity.} Let $(\mu_j)\subset\mathcal{M}_\infty$ be such that $\mu_j\stackrel{*}{\rightharpoonup}\mu$ for some $\mu\in\mathcal{M}_\infty$. 
Let $\varphi\in C^\infty_c((0,1)\times\R)$, then by duality we have
\begin{equation*}
\lim_{j\to+\infty} \int_{(0,1)\times\R}\varphi \di\mu_{j,x}=-\lim_{j\to+\infty} \int_{(0,1)\times\R}\varphi_{,x} \di\mu_{j}=
- \int_{(0,1)\times\R}\varphi_{,x} \di\mu=- \int_{(0,1)\times\R}\varphi \di\mu_{,x}\,,
\end{equation*}
	so that $\mu_{j,x}\stackrel{*}{\rightharpoonup}\mu_{,x}$. Then by applying Reshetnyak Theorem \cite[Theorem 2.38]{AmFuPa:00} exactly as in \eqref{eq:Resh1} and \eqref{eq:Resh2} we deduce
	\begin{equation*}
		\liminf_{j\to+\infty}\mathcal{F}_\infty(\mu_j)\ge  \mathcal{F}_\infty(\mu)\,.
	\end{equation*}

	\noindent
	\textit{Compactness.} Let $(\mu_j)\subset\mathcal M_\infty$ be a minimizing sequence for $\mathcal{F}_\infty$. Then by Corollary \ref{cor:disint=pushfwd} for every $j$ there exists $x\nu_{j,x}$ with $\nu_{j,x}$ probability measure on $\R$ such that
	\begin{equation*}
|\mu_j|((0,1)\times\R)=\int_{(0,1)\times\R}\di\mu_j=\int_0^1\int_\R\di\nu_{j,x}2x\dx=\int_0^12x\dx=1\,.
	\end{equation*}
	Thus, up to subsequence, $\mu_j\stackrel{*}{\rightharpoonup}\hat\mu$. Moreover
	 by Young's inequality and \cite[Proposition 1.23]{AmFuPa:00} we have
	\begin{equation*}
C\ge\mathcal{F}_\infty(\mu_j)\ge\int_{(0,1)\times \R}\left|\frac{\di\mu_{j,x}}{\di\mu_j}\right|\di\mu_j=|\mu_{j,x}|((0,1)\times\R).
	\end{equation*}
Thus, up to subsequence and 
arguing as in the proof of Proposition \ref{prop:compactness}, we can deduce that $\mu_{j,x}\stackrel{*}{\rightharpoonup}\hat\mu_{,x}$ and $\hat\mu\in \mathcal{M}_\infty$.

\noindent \textit{Minimality of $\hat \mu$.} By lower semi-continuity and compactness we have
\begin{equation*}
\inf_{\mu\in\mathcal{M}_\infty}\mathcal{F}_\infty(\mu)= \lim_{j\to+\infty}\mathcal{F}_\infty(\mu_j)\ge \mathcal{F}_\infty(\hat\mu)\ge \inf_{\mu\in\mathcal{M}_\infty}\mathcal{F}_\infty(\mu)\,,
\end{equation*}
	so that 
	\begin{equation*}
		\mathcal{F}_\infty(\hat\mu)=\inf_{\mu \in \mathcal M_\infty} 	\mathcal{F}_\infty(\mu)\,.
	\end{equation*}
	
\end{proof}
\begin{lemma}[Disintegration of $\mu\in\mathcal{M}_\infty$ in the $k$-variable]\label{lem-disint-k}
	Let $\mu\in\mathcal{M}_\infty$ with $\mathcal{F}_\infty(\mu)<+\infty$. Then there exists $k\mapsto g_k$ $(\pi_2)_\sharp\mu$-measurable such that ${\big((\pi_2)_\sharp\hat\mu\,,\, (g_k)_{k\in\R}\big)}$ is a disintegration of $\mu$ $($in the $k$-variable$)$.
	Moreover  for $(\pi_2)_\sharp\mu$ a.e. $k\in\R$
	\begin{equation*}
		g_k\in W^{1,1}(0,1) \quad\text{ with }\quad
		\dot g_k=\frac{\di\mu_{,x}}{\di\mu}(\cdot,k)g_k\mathcal L^1\,.
	\end{equation*}
\end{lemma}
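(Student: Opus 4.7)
My plan is to produce the candidate disintegration via the classical disintegration theorem, then to transfer the identity $\mu_{,x}=\tfrac{\di\mu_{,x}}{\di\mu}\mu$ into a fibrewise distributional ODE for the disintegration measures, and finally to solve that ODE to obtain $g_k\in W^{1,1}$.

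First I would apply \cite[Theorem 2.28]{AmFuPa:00} to the projection $\pi_2\colon(0,1)\times\R\to\R$ and the measure $\mu$, producing a $(\pi_2)_\sharp\mu$-measurable family $k\mapsto\eta_k$ of probability measures on $(0,1)$ with $\di\mu=\di\eta_k(x)\,\di(\pi_2)_\sharp\mu(k)$. Writing $h_k(x):=\tfrac{\di\mu_{,x}}{\di\mu}(x,k)$, Young's inequality together with $\mathcal F_\infty(\mu)<\infty$ yields $\int|h_k|\,\di\mu<\infty$, hence $h_k\in L^1(\eta_k)$ for $(\pi_2)_\sharp\mu$-a.e.\ $k$. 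Testing the defining identity for $\mu_{,x}$ against tensor product functions $\varphi(x)\psi(k)$, with $\varphi$ ranging over a countable dense family in $C_c^\infty(0,1)$ and $\psi$ bounded Borel, and using Fubini via the disintegration and a density argument, I would deduce that for $(\pi_2)_\sharp\mu$-a.e.\ $k$
\begin{equation*}
-\int_0^1\dot\varphi(x)\,\di\eta_k(x)=\int_0^1\varphi(x)h_k(x)\,\di\eta_k(x)\quad\forall\,\varphi\in C_c^\infty(0,1),
\end{equation*}
which means that the distributional derivative of $\eta_k$ on $(0,1)$ equals the finite signed measure $h_k\eta_k$.

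The key step is to upgrade this fibrewise distributional ODE to $\eta_k\ll\mathcal L^1$. Fix $k$ in the full-measure set above. Since $h_k\eta_k$ is a finite signed measure on $(0,1)$, it admits a BV primitive $v_k(x):=(h_k\eta_k)((0,x])$, so that $\dot v_k=h_k\eta_k$ as distributions on $(0,1)$. Combining with the previous identity, $\eta_k-v_k$ has vanishing distributional derivative on the connected interval $(0,1)$, which forces $\eta_k-v_k$ to equal some constant $C_k$. Reading this as an identity between the distribution associated to the measure $\eta_k$ and the one associated to the locally integrable function $v_k+C_k$, one concludes $\eta_k=(v_k+C_k)\mathcal L^1$ as measures; hence $g_k:=v_k+C_k$ is the Lebesgue density, with $C_k$ fixed by $\int_0^1 g_k\,\dx=\eta_k((0,1))=1$. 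Once $\eta_k=g_k\mathcal L^1$ is established, the formula $v_k(x)=\int_0^x h_k(s)g_k(s)\,\ds$ shows $v_k$ is absolutely continuous, hence so is $g_k=v_k+C_k$, giving $g_k\in W^{1,1}(0,1)$ with $\dot g_k=h_k g_k$ almost everywhere -- precisely the ODE asserted in the lemma.

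The hard part will be controlling these fibrewise manipulations uniformly in $k$: I have to discard a single $(\pi_2)_\sharp\mu$-null set independent of the test function $\varphi$, and to verify joint Borel measurability of $k\mapsto v_k$, and hence of $k\mapsto g_k$, so that the resulting family meets the measurability requirement of Definition \ref{def:disintegration-in-k}. The first point is handled by choosing once and for all a countable dense family in $C_c^\infty(0,1)$ and extending the fibrewise identity by uniform approximation, while the second follows from Fubini applied to the jointly measurable integrand $(x,k)\mapsto h_k(x)g_k(x)$.
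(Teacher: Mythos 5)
Your proposal is correct, and its first two steps coincide with the paper's proof: apply the disintegration theorem \cite[Theorem 2.28]{AmFuPa:00} to $\pi_2$, then test $\mu_{,x}$ against product test functions (the paper uses $\phi(x)\mathbbm{1}_A(k)$ and the arbitrariness of $A$, you use a countable dense family of $\phi$'s, which amounts to the same fix of a single null set) to obtain, for $(\pi_2)_\sharp\mu$-a.e.\ $k$, the fibrewise identity $(\nu_k)_{,x}=\frac{\di\mu_{,x}}{\di\mu}(\cdot,k)\,\nu_k$ with the density in $L^1(\nu_k)$ by Young's inequality and $\mathcal F_\infty(\mu)<\infty$. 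Where you genuinely diverge is the upgrade to $\nu_k\ll\mathcal L^1$: the paper mollifies $\nu_k$ (extended by zero), uses the mollified ODE and the fundamental theorem of calculus to get the uniform bound $\|\nu_k*\rho_\delta\|_{L^\infty}\le\int_0^1\bigl|\frac{\di\mu_{,x}}{\di\mu}(\cdot,k)\bigr|\di\nu_k$, and passes to the weak-* limit to conclude $\nu_k=g_k\mathcal L^1$ with $g_k\in L^\infty$; you instead take the BV primitive $v_k$ of the finite signed measure $h_k\eta_k$ and invoke the fact that a distribution on an interval with vanishing derivative is a constant, giving $\eta_k=(v_k+C_k)\mathcal L^1$ directly and then self-improving $v_k$ to an absolutely continuous indefinite integral. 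Both routes are sound and end with the same $W^{1,1}$ conclusion via $\int_0^1|\dot g_k|\dx\le\int_0^1|h_k|g_k\dx<\infty$; your argument avoids mollification (and the slightly delicate extension-by-zero of $\nu_k$ to $\R$ implicit in the paper), while the paper's yields the additional information $g_k\in L^\infty(0,1)$ for free. Two small points to tidy up: the constant $C_k$ is produced by the constancy theorem itself (the normalization $\int_0^1 g_k\dx=1$ is then automatic, not what determines $C_k$), and the $(\pi_2)_\sharp\mu$-measurability of $k\mapsto g_k$ should be spelled out, e.g.\ via measurability of $k\mapsto(h_k\eta_k)((0,x])$ for each fixed $x$; neither issue affects the validity of the argument.
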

\begin{proof}
	By the Disintegration Theorem (cf. \cite[Theorem2.28]{AmFuPa:00}) there exists $k\mapsto\nu_k\in \mathcal{M}^+_b(0,1)$ $(\pi_2)_\sharp\mu$-measurable with $\nu_k(0,1)=1$ such that
	\begin{equation}\label{eq:disin}
\int_{(0,1)\times\R}f(x,k)\di\mu(x,k)=\int_\R\int_0^1 f(x,k)\di\nu_k(x)\di(\pi_2)_\sharp\mu(k)\,,
	\end{equation}
	for all $f\in L^1((0,1)\times\R;\mu)$.
Let $\varphi(x,k)=\phi(x) \mathbbm{1}_A(k)$ with $\phi\in C^\infty_c(0,1)$ and $A\subset\R$ bounded and measurable. Then, being $\varphi_{,x}(x,k)=\dot\phi(x) \mathbbm{1}_A(k)$, from Remark \ref{rem:int-byparts} \ref{rem-i} 
 we have
\begin{equation*}
\begin{split}
\int_{(0,1)\times\R}\varphi\di\mu_{,x}=-\int_{(0,1)\times\R}\varphi_{,x}\di\mu
=-\int_\R\int_0^1 \varphi_{,x}\di\nu_k(x)\di(\pi_2)_\sharp\mu(k)\,.
\end{split}
\end{equation*}
Moreover, since $\mu_{,x}\ll\mu$,
\begin{equation*}
	\begin{split}
\int_{(0,1)\times\R}\varphi\di\mu_{,x}=
\int_{(0,1)\times\R}\varphi \frac{\di\mu_{,x}}{\di\mu} \di\mu= 
\int_\R\int_0^1 \varphi \frac{\di\mu_{,x}}{\di\mu} 
\di\nu_k(x)\di(\pi_2)_\sharp\mu(k)\,.
	\end{split}
\end{equation*}
	Therefore we deduce
	\begin{equation*}
	-\int_A \int_0^1 \dot\phi(x)\di\nu_k(x) \di(\pi_2)_\sharp\mu(k)= 
	\int_A\int_0^1 \phi(x) \frac{\di\mu_{,x}}{\di\mu} 
	\di\nu_k(x)\di(\pi_2)_\sharp\mu(k)\,.
	\end{equation*}
	By the arbitrariness of $A$ this implies
	\begin{equation*}
	- \int_0^1 \dot\phi(x)\di\nu_k(x)=\int_0^1 \phi(x) \frac{\di\mu_{,x}}{\di\mu}(\cdot,k) 
	\di\nu_k(x)\quad \text{ for }\ (\pi_2)_\sharp\mu\ \text{ a.e. } k\in\R\,,
	\end{equation*}
	from which, in turn, it follows that $(\nu_k)_{,x}\ll\nu_k$ and 
	\begin{equation}\label{eq:density}
	(\nu_k)_{,x}= \frac{\di\mu_{,x}}{\di\mu}(\cdot,k)\, \nu_k \quad \text{ for }\ (\pi_2)_\sharp\mu \ \text{ a.e. } k\in\R\,.
	\end{equation}
	We next claim that \eqref{eq:density} implies  that $\nu_k\ll\mathcal{L}^1\res(0,1)$ and there exists $g_k\in W^{1,1}(0,1)$ such that $\nu_k=g_k\mathcal{L}^1\res(0,1)$ for  $(\pi_2)_\sharp\mu $ a.e. $k\in\R$. This is enough to conclude the proof since \eqref{eq:disin} becomes
		\begin{equation*}
		\int_{(0,1)\times\R}f(x,k)\di\mu(x,k)=\int_\R\int_0^1 f(x,k)g_k(x)\dx\di(\pi_2)_\sharp\mu(k)\,.
	\end{equation*}
It remains to show the claim. Let $\delta>0$ and let $\rho_\delta(x)=\frac1\delta\rho(\frac x\delta)$ be a smooth mollifier at scale $\delta$. From \eqref{eq:density} we have
\begin{equation*}
	(\nu_k*\rho_\delta)_{,x}= \Big(\frac{\di\mu_{,x}}{\di\mu}(\cdot,k)\, \nu_k \Big)*\rho_\delta\quad \text{ for }\ (\pi_2)_\sharp\mu \ \text{ a.e. } k\in\R\,,
\end{equation*}
where here is implicitly assumed $\nu_k$ to be extended to 0 in $(0,1)^c$. Thus 
\begin{equation*}
|\nu_k*\rho_\delta(x)|=\left|\int_{-\infty}^x\Big(\frac{\di\mu_{,x}}{\di\mu}(\cdot,k)\, \nu_k \Big)*\rho_\delta\dt\right|\le  \int_0^1\Big| \frac{\di\mu_{,x}}{\di\mu}(\cdot,k)
\Big|\di\nu_k<+\infty\,,
\end{equation*}
so that, in particular $\nu_k*\rho_\delta\in L^\infty(0,1)$. This together with  $\nu_k*\rho_\delta\stackrel{*}{\rightharpoonup}\nu_k$, this imply $\nu_k= g_k\mathcal{L}^1\res(0,1)$ for some $g_k\in L^\infty(0,1)$. In addition, given $\psi\in C_c^\infty(0,1)$, 
\begin{equation*}
\int_0^1\dot\psi g_k\dx=\int_0^1\dot\psi\di\nu_k=-\int_0^1\psi\frac{\di\mu_{,x}}{\di\mu}(\cdot,k)\di\nu_k\,,
\end{equation*}
from which we infer $\dot g_k=\frac{\di\mu_{,x}}{\di\mu}(\cdot,k)\nu_k=\frac{\di\mu_{,x}}{\di\mu}(\cdot,k)g_k\mathcal L^1$. Eventually by Young's inequality
\begin{equation*}
	\begin{split}
	\int_0^1|\dot g_k(x)|\dx&= \int_0^1\frac{|\dot g_k(x)|}{\sqrt2 k\sqrt{g_k(x)}}\sqrt2k\sqrt{g_k(x)}\dx\\&\le \int_0^1 \left[k^2g_k(x)+ \frac1{4k^2} \frac{(\dot g_k(x))^2}{g_k(x)}\right]
	\dx\\
	&\le 
	\int_0^1\left[k^2+\frac1{4k^2}\Big(\frac{\di\mu_{,x}}{\di\mu}(\cdot,k)\Big)^2
	\right]g_k(x)\dx<+\infty
	\end{split}
\end{equation*}
for $(\pi_2)_\sharp\mu$ a.e. $k\in\R$, and thus in particular $g_k\in W^{1,1}(0,1)$.

\end{proof}
W are now ready to prove the main result of this section.
\begin{proof}[Proof of Theorem \ref{thm:minimizers}]
	By Lemma \ref{lem:prop-F} we know there exists $\hat\mu\in\mathcal{M}_\infty$ minimizer of $\mathcal{F}_\infty$. Moreover by Lemma \ref{lem-disint-k} there exists $k\mapsto g_k$ $(\pi_2)_\sharp\hat\mu$ measurable with $g_k\in W^{1,1}(0,1)$ for $(\pi_2)_\sharp\hat\mu$ a.e. $k\in\R$, such that ${\big((\pi_2)_\sharp\hat\mu\,,\, (g_k)_{k\in\R}\big)}$ is a disintegration of $\hat\mu$. Therefore, in particular we can rewrite 
	\begin{equation*}
	\mathcal F_\infty(\hat\mu)= \int_\R \int_0^1\left[k^2+\frac1{4k^2}\Big(\frac{\di\hat\mu_{,x}}{\di\hat\mu}\Big)^2\right]g_k(x)\dx\di(\pi_2)_\sharp\hat\mu\,.
	\end{equation*}
\textit{Step 1: we show \eqref{equipart-section}.}
Assume by contradiction that \eqref{equipart-section} does not hold true. Then the set 
\begin{equation*}
E:=\left\{k\in\R \colon 
\int_0^1k^2g_k(x)\dx\ne \int_0^1 \frac1{4k^2}\Big(\frac{\di\hat\mu_{,x}}{\di\hat\mu}\Big)^2g_k(x)\dx
\right\}
\end{equation*}
is such that $(\pi_2)_\sharp\hat\mu(E)>0$. Assume, without loss of generality, that the subset
\begin{equation*}
	E_1^+:=\left\{k\in\R^+ \colon 
	\int_0^1k^2g_k(x)\dx< \int_0^1 \frac1{4k^2}\Big(\frac{\di\hat\mu_{,x}}{\di\hat\mu}\Big)^2g_k(x)\dx
	\right\}\subset E
\end{equation*}
satisfies $(\pi_2)_\sharp\hat\mu(E_1^+)>0$ (the other cases can be treated in a similar way). 
Since $\int_0^1g_k(x)\dx=1$ we can rewrite
\begin{equation*}
	E_1^+=\left\{k\in\R^+\colon
	1<\frac1{4k^4}\int_0^1 \Big(\frac{\di\hat\mu_{,x}}{\di\hat\mu}\Big)^2g_k(x)\dx
	\right\}\,.
\end{equation*}
Then there exists $\sigma>0$ such that 
\begin{equation*}
	E_\sigma:=\left\{k\in\R^+ \colon 
1+\sigma
\le\frac1{4k^2} \int_0^1 \Big(\frac{\di\hat\mu_{,x}}{\di\hat\mu}\Big)^2g_k(x)\dx
	\right\}\subset E_1^+
\end{equation*}
with $(\pi_2)_\sharp\hat\mu(E_\sigma)>0$. Next fix $\delta>0$ such  that $(1+\delta)^2<1+\sigma$ and let  $\tilde\mu\in\mathcal{M}_b((0,1)\times\R)$ be defined as follows
\begin{equation}
\tilde\mu:=\hat\mu\res(0,1)\times(\R\setminus E_\sigma)+ \mu_\delta\,,
\end{equation}
where $\mu_\delta:=(\tau_\delta)_\sharp\mu\res((0,1)\times E_\sigma)$ is the push-forward of $\mu\res((0,1)\times E_\sigma)$ with respect to the map $\tau_\delta\colon (0,1)\times \R\to (0,1)\times\R$, $\tau_\delta(x,k):=(x,k(1+\delta))$. 
Note that $\tilde\mu$ is a positive measure. Setting $ E_\sigma^\delta:=(1+\delta)E_\sigma$, then the support of $\mu_\delta$ is contained in $\overline{(0,1)\times E_\sigma^\delta}$, and
\begin{equation*}
\int_{(0,1)\times E_\sigma^\delta}f(x,k)\di\mu_\delta= \int_{(0,1)\times E_\sigma}f(x,k(1+\delta))\di\hat\mu
\end{equation*}
for every $f$ summable with respect to $\mu_\delta$. Hence, by duality and using that $\hat\mu_{,x}\ll\hat\mu$ we have 
\begin{equation*}
	\begin{split}
\int_{(0,1)\times E_\sigma^\delta}\varphi(x,k)\di(\mu_\delta)_{,x}= -  \int_{(0,1)\times E_\sigma^\delta}\varphi_{,x}(x,k)\di\mu_\delta &=-   \int_{(0,1)\times E_\sigma}\varphi_{,x}(x,k(1+\delta))\di\hat\mu\\= \int_{(0,1)\times E_\sigma}\varphi(x,k(1+\delta))\di\hat\mu_{,x}
& =\int_{(0,1)\times E_\sigma}\varphi(x,k(1+\delta))\frac{\di\hat\mu_{,x}}{\di\hat\mu}(x,k)\di\hat\mu\\ &
= \int_{(0,1)\times E_\sigma^\delta}\varphi(x,k)\frac{\di\hat\mu_{,x}}{\di\hat\mu}\Big(x,\frac k{1+\delta}\Big)\di\mu_\delta\,,
	\end{split}
\end{equation*}
for all $\varphi\in C_c^\infty((0,1)\times E_\sigma^\delta)$. As a consequence we readily deduce that $(\mu_\delta)_{,x}\ll\mu_\delta$ with 
\begin{equation}\label{eq:density_delta}
\frac{\di (\mu_\delta)_{,x}}{\di\mu_\delta}(x,k)=\frac{\di\hat\mu_{,x}}{\di\hat\mu}\Big(x,\frac k{1+\delta}\Big)\,,
\end{equation}
so that, in particular, $\tilde\mu_{,x}\ll\tilde\mu$. 
Moreover for every $\phi\in C_c^\infty(0,1)$ we have
\begin{equation*}
	\begin{split}
	\int_0^12x\phi(x)\dx=\int_{(0,1)\times\R}\phi(x)\di\hat\mu&= \int_{(0,1)\times E^c_\sigma}\phi(x)\di\hat\mu +  \int_{(0,1)\times E_\sigma}\phi(x)\di\hat\mu\\
	&= \int_{(0,1)\times E^c_\sigma}\phi(x)\di\hat\mu +  \int_{(0,1)\times E^\delta_\sigma}\phi(x)\di\mu_\delta= \int_{(0,1)\times\R}\phi(x)\di\tilde\mu\,,
	\end{split}
\end{equation*}
and thus $\tilde\mu\in\mathcal{M}_\infty$. We next show that 
\begin{equation*}
\mathcal{F}_\infty(\tilde\mu)<\mathcal{F}_\infty(\hat\mu)\,,
\end{equation*}
which contradicts the fact that $\hat\mu$ is a minimizer. 
To this purpose it is convenient to define the localized functional 
\begin{equation*}
\mathcal{F}_\infty(\mu, A):=\int_{(0,1)\times A}\left[k^2+\frac{1}{4k^2}\Big(\frac{\di\mu_{,x}}{\di\mu}\Big)^2\right]\di\mu\,,
\end{equation*}
for any bounded measure $\mu$ with $\mu_{,x}\ll\mu$ and any $A\subset\R$ measurable. Observing that $\tilde\mu=\hat\mu$ on $ (0,1)\times(\R\setminus(E_\sigma\cup E_\sigma^\delta))$ and $\tilde \mu=\mu_\delta$ on $(0,1)\times(E_\sigma^\delta\cap E_\sigma)$ we have
\begin{equation}\label{eq:equipart}
\mathcal{F}_\infty(\tilde\mu)= \mathcal{F}_\infty(\hat\mu, \R\setminus(E_\sigma\cup E_\sigma^\delta))+ \mathcal{F}_\infty(\tilde\mu, E_\sigma^\delta\setminus E_\sigma)+ \mathcal{F}_\infty(\mu_\delta, E_\sigma^\delta\cap E_\sigma)\,.
\end{equation}
By Lemma \ref{lem:prop-F} we know that $\mathcal{F}_\infty$ is convex and 1-homogeneous, which together with $$\tilde\mu=\frac{2\hat\mu+2\mu_\delta}{2}\quad\text{ on }\quad (0,1)\times(E_\sigma^\delta\setminus E_\sigma)\,,$$
yield
\begin{equation*}
	\begin{split}
\mathcal{F}_\infty(\tilde\mu, E_\sigma^\delta\setminus E_\sigma)&= \mathcal{F}_\infty(\tilde\mu\res ((0,1)\times E_\sigma^\delta\setminus E_\sigma))\\&\le 
\mathcal{F}_\infty(\hat\mu\res ((0,1)\times E_\sigma^\delta\setminus E_\sigma))+
\mathcal{F}_\infty(\mu_\delta\res ((0,1)\times E_\sigma^\delta\setminus E_\sigma))\\
& = \mathcal{F}_\infty(\hat\mu,( E_\sigma^\delta\setminus E_\sigma))+
\mathcal{F}_\infty(\mu_\delta,( E_\sigma^\delta\setminus E_\sigma))\,.
	\end{split}
\end{equation*}
Combining this together with \eqref{eq:equipart} we get
\begin{equation*}
	\begin{split}
\mathcal{F}_\infty(\tilde\mu)&\le\mathcal{F}_\infty(\hat\mu,\R\setminus E_\sigma)
+ \mathcal{F}_\infty(\mu_\delta, E_\sigma^\delta)\\
& = \mathcal{F}_\infty(\hat\mu)+ \mathcal{F}_\infty(\mu_\delta, E_\sigma^\delta)
- \mathcal{F}_\infty(\hat\mu, E_\sigma)
\,. 
	\end{split}
\end{equation*}
Therefore we would conclude the proof if we show that 
\begin{equation}\label{strict_inequality}
\mathcal{F}_\infty(\mu_\delta, E_\sigma^\delta)
- \mathcal{F}_\infty(\hat\mu, E_\sigma)<0\,.
\end{equation}
By the  change of variable $k=\hat k(1+\delta)$ and recalling \eqref{eq:density_delta} it holds
\begin{equation*}
\begin{split}
\mathcal{F}_\infty(\mu_\delta, E_\sigma^\delta)      & =   \int_{(0,1)\times E_\sigma^\delta}\left[k^2+\frac{1}{4k^2}\Big(\frac{\di(\mu_\delta)_{,x}}{\di\mu_\delta}\Big)^2\right]\di\mu_\delta\\
&= 
 \int_{(0,1)\times E_\sigma}\left[k^2(1+\delta)^2+\frac{1}{4k^2(1+\delta)^2}\Big(\frac{\di\hat\mu_{,x}}{\di\hat\mu}\Big)^2\right]\di\hat\mu\,,
\end{split}
\end{equation*}
from which it follows
\begin{equation*}
	\begin{split}
	\mathcal{F}_\infty(\mu_\delta, E_\sigma^\delta)
- \mathcal{F}_\infty(\hat\mu, E_\sigma)&= \int_{(0,1)\times E_\sigma}\left[k^2((1+\delta)^2-1)+\frac{1-(1+\delta)^2}{4k^2(1+\delta)^2}\Big(\frac{\di\hat\mu_{,x}}{\di\hat\mu}\Big)^2\right]\di\hat\mu\\ 
& =  
\frac{(1+\delta)^2-1}{(1+\delta)^2}
\int_{(0,1)\times E_\sigma}\left[k^2(1+\delta)^2-\frac{1}{4k^2}\Big(\frac{\di\hat\mu_{,x}}{\di\hat\mu}\Big)^2\right]\di\hat\mu
\,.
	\end{split}
\end{equation*}
Being $\delta>0$ we have that $((1+\delta)^2-1)/(1+\delta)^2>0$.  Moreover, by disintegration we can rewrite the integral as
\begin{equation*}
	\begin{split}
		\int_{(0,1)\times E_\sigma}&\left[k^2(1+\delta)^2-\frac{1}{4k^2}\Big(\frac{\di\hat\mu_{,x}}{\di\hat\mu}\Big)^2\right]\di\hat\mu
		\\&=  \int_{E_\sigma} \int_0^1 \left[k^2(1+\delta)^2-\frac{1}{4k^2}\Big(\frac{\di\hat\mu_{,x}}{\di\hat\mu}\Big)^2\right]g_k(x)\dx\di(\pi_2)_\sharp\hat\mu\,.
	\end{split}
\end{equation*}
The above quantity is strictly negative if
\begin{equation*}
	 \int_0^1 \left[k^2(1+\delta)^2-\frac{1}{4k^2}\Big(\frac{\di\hat\mu_{,x}}{\di\hat\mu}\Big)^2\right]g_k(x)\dx<0\quad \text{ for }\ (\pi_2)_\sharp\hat\mu \ \text{ a.e. }\ k\in E_\sigma\,.
\end{equation*}
From $\int_0^1g_k(x)\dx=1$, this is equivalent to 
\begin{equation*}
(1+\delta)^2<\frac1{4k^4}\int_0^1 \Big(\frac{\di\hat\mu_{,x}}{\di\hat\mu}\Big)^2g_k(x)\dx\quad \text{ for }\ (\pi_2)_\sharp\hat\mu \ \text{ a.e. }\ k\in E_\sigma\,,
\end{equation*}
which holds thanks to the choice of $\delta$ and the definition of $E_\sigma$, and thus we infer \eqref{strict_inequality}.\\

\noindent
\textit{Step 2: we show \eqref{small-k}.} By Lemma \ref{lem-disint-k} we have $\dot g_k=\frac{\di\hat\mu_{,x}}{\di\hat\mu}(\cdot,k)g_k\mathcal{L}^1$, so that from \eqref{equipart-section} we have
\begin{equation}\label{equality}
		k^2\int_0^1(\sqrt{g_k(x)})^2\dx= 
	k^2\int_0^1g_k(x)\dx= \frac1{4k^2}\int_0^1\frac{(\dot g_k(x))^2}{g_k(x)}\dx= \frac1{k^2}\int_0^1\Big(\frac{\di}{\dx}\sqrt{g_k(x)}\Big)^2\dx\,.
\end{equation}
Since $\hat\mu\in\mathcal M_\infty$, then in particular $\hat\mu(\{0\}\times\R)=0$ from which it follows $g_k(0)=0$ for $(\pi_2)_\sharp\hat\mu$ a.e. $k\in\R$.  Thus we apply Poincaré's inequality to get
\begin{equation*}
\int_0^1(\sqrt{g_k(x)})^2\dx\le C \int_0^1\Big(\frac{\di}{\dx}\sqrt{g_k(x)}\Big)^2\dx\,,
\end{equation*}
for some constant  $C>0$.  Now combining the above inequality with \eqref{equality} we find that 
\begin{equation*}
\int_0^1(\sqrt{g_k(x)})^2\dx\le Ck^4 \int_0^1(\sqrt{g_k(x)})^2\ \iff \ k^4\ge C^{-1}\,.
\end{equation*}
Hence \eqref{equipart-section} holds true if $k^4\ge C^{-1}$ which in turn implies $(\pi_2)_\sharp\hat\mu(|k|\le C^{-1/4})=0$.

\end{proof}

\section*{Acknowledgement}

This work was done while the second author was postdoc at TU Dortmund. Both authors were partially supported by the German Science Foundation DFG in context of the Emmy
Noether Junior Research Group BE 5922/1-1. RM has been partially funded by the European Union - NextGenerationEU under the Italian Ministry of University and Research (MUR) National Centre for HPC, Big Data and Quantum Computing CN\_00000013 - CUP: E13C22001000006.\\
PB also thanks Carlos Rom\'an and Alaa Elshorbagy for discussions at the early stages of the project. 


\providecommand{\bysame}{\leavevmode\hbox to3em{\hrulefill}\thinspace}
\providecommand{\MR}{\relax\ifhmode\unskip\space\fi MR }
\providecommand{\MRhref}[2]{%
  \href{http://www.ams.org/mathscinet-getitem?mr=#1}{#2}
}
\providecommand{\href}[2]{#2}

%
%

\end{document}